\documentclass[12pt]{article}

\pdfoutput=1 

\usepackage{multirow}
\usepackage{verbatim}
\usepackage{makecell}
\usepackage{multicol}
\usepackage{multirow} 
\usepackage{diagbox}
\usepackage{amsmath}
\usepackage{mathrsfs}
\usepackage{amsfonts}
\usepackage{amssymb}
\usepackage{diagbox}
\usepackage{footnote}
\usepackage{graphicx}
\usepackage{float}
\usepackage{epstopdf}
\usepackage[OT1]{fontenc}
\usepackage{geometry}
\usepackage{setspace}
\usepackage{tikz}
\usepackage{pgfplots}
\usepackage[round]{natbib}
\usepackage{multirow}
\usepackage[toc,page]{appendix}
\usepackage{hyperref}
\usepackage{caption}
\usepackage{subcaption}
\usepackage{enumitem}
\usepackage{booktabs}
\usepackage{makecell}
\usepackage{multicol}
\usepackage{multirow} 
\usepackage{dsfont}
\usepackage[figuresright]{rotating}
\setcounter{MaxMatrixCols}{10}
%TCIDATA{OutputFilter=Latex.dll}
%TCIDATA{Version=5.50.0.2953}
%TCIDATA{<META NAME="SaveForMode" CONTENT="1">}
%TCIDATA{BibliographyScheme=Manual}
%TCIDATA{LastRevised=Wednesday, December 16, 2020 16:29:35}
%TCIDATA{<META NAME="GraphicsSave" CONTENT="32">}
\allowdisplaybreaks[4]
\providecommand{\U}[1]{\protect\rule{.1in}{.1in}}

\hypersetup{
	bookmarks=true,
	colorlinks=true,
	linkcolor=blue,
	citecolor=blue,
	filecolor=magenta,
	urlcolor=blue,
}

\newcommand{\bX}{\mathbf{X}}
\newcommand{\bY}{\mathbf{Y}}
\newcommand{\bT}{\mathbf{T}}
\newcommand{\bB}{\mathbf{B}}
\newcommand{\bL}{\mathbf{L}}
\newcommand{\bS}{\mathbf{S}}
\newcommand{\bR}{\mathbf{R}}
\newcommand{\bU}{\mathbf{U}}
\newcommand{\bbeta}{\boldsymbol{\beta}}
\newcommand{\mP}{\mathbb{P}}
\newcommand{\E}{\mathbb{E}}
\newcommand{\Var}{\mathrm{Var}}
\newcommand{\Cov}{\mathrm{Cov}}
\newcommand{\cH}{\mathcal{H}}
\newcommand{\cB}{\mathcal{B}}
\newcommand{\cC}{\mathcal{C}}
\newcommand{\brho}{\boldsymbol{\rho}}
\newcommand{\etab}{\boldsymbol{\eta}}
\newcommand{\bh}{\mathbf{h}}
\newcommand{\bZ}{\mathbf{Z}}
\newcommand{\bW}{\mathbf{W}}
\newcommand{\bx}{\mathbf{x}}
\newcommand{\by}{\mathbf{y}}
\newcommand{\bz}{\mathbf{z}}
\newcommand{\be}{\mathbf{e}}
\newcommand{\bO}{\mathcal{O}}
\newcommand{\cG}{\mathcal{G}}
\newcommand{\dto}{\stackrel{\mathcal{L}}{\rightsquigarrow}}
\newcommand{\pto}{\stackrel{p}{\to}}
\newcommand{\asto}{\stackrel{a.s.}{\to}}
\newcommand{\mR}{\mathbb{R}}
\newcommand{\ind}{\mathds{1}_{\left\{\bX_1^{\bh} \le x\right\}}}
\newcommand{\indi}{\mathds{1}_{\left\{\bX_i^{\bh} \le x\right\}}}
\newcommand{\indx}{\mathds{1}_{\left\{\bX^{\bh} \le x\right\}}}
\newcommand{\indu}{\mathds{1}_{\left\{\bU^{\bh} \le u\right\}}}
\newcommand{\induul}{\mathds{1}_{\left\{u_1<\bU^{\bh} \le u\right\}}}
\newcommand{\induur}{\mathds{1}_{\left\{u<\bU^{\bh} \le u_2\right\}}}
\newcommand{\cF}{\mathcal{F}}
\newcommand{\bE}{\mathbf{E}}
\newcommand{\cZ}{\mathcal{Z}}
\newcommand{\btheta}{\boldsymbol{\theta}}

\newtheorem{theorem}{Theorem}[section]

\newtheorem{corollary}{Corollary}[section]

\newtheorem{lemma}{Lemma}[section]

\newtheorem{remark}{Remark}[section]

\newenvironment{proof}[1][Proof]{\noindent\textbf{#1.} }{\ \rule{0.5em}{0.5em}}

\geometry{a4paper,left=2.54cm,right=2.54cm,top=2.54cm,bottom=2.54cm}

\bibliographystyle{apalike}
\onehalfspacing

\begin{document}
	
	\title{Testing linearity in semi-functional partially linear regression models}
	
	\author{Yongzhen Feng\footnotemark[1]\\ Tsinghua University \and Jie Li\footnotemark[2]\\ Renmin University of China\and Xiaojun Song\footnotemark[3]\\ Peking University
	}
	\date{}
	
	\renewcommand{\thefootnote}{\fnsymbol{footnote}}
	\footnotetext[1]{Center for Statistical Science and Department of Industrial Engineering, Tsinghua University, Beijing, 100084, China.}
		\footnotetext[2]{School of Statistics, Renmin University of China, Beijing, 100872, China. Email: lijie\_stat@ruc.edu.cn.}
	\footnotetext[3]{Department of Business Statistics and Econometrics, Guanghua School of Management and Center for Statistical Science, Peking University, Beijing, 100871, China. Email: sxj@gsm.pku.edu.cn.}
	\maketitle

	\begin{abstract}
	This paper proposes a Kolmogorov--Smirnov type statistic and a Cram\'{e}r--von Mises type statistic to test linearity in semi-functional partially linear regression models. Our test statistics are based on a residual marked empirical process indexed by a randomly projected functional covariate, %to test linearity in semi-functional partially linear regression models.  With the almost surely equivalence between the null hypothesis and the projected hypothesis conditionally on the projected functional covariate,  we construct the test statistics  based on  continuous functionals over the projected process. 	
%Our test statistics are free of user-chosen tuning parameters such as bandwidth and 
which is able to circumvent the ``curse of dimensionality'' brought by the functional covariate. The asymptotic properties of the proposed test statistics under the null, the fixed alternative, and a sequence of local alternatives converging to the null at the $n^{1/2}$ rate are established. %They are able to detect a broad class of local and fixed alternatives converging to the null at the parametric rate $n^{-1/2}$, where $n$ is the sample size. 
A straightforward wild bootstrap procedure is suggested to 
%The weak convergence of the proposed tests is obtained under mild assumptions, and wild bootstrap resampling is applied to calibrate the null distributions of the testing statistics. 
estimate the critical values that are required to carry out the tests in practical applications. Results from an extensive simulation study show that our tests perform reasonably well in finite samples. %Through extensive Monte Carlo simulations, we examine the finite sample performance of our tests. 
Finally, we apply our tests to the Tecator and AEMET datasets to check whether the assumption of linearity is supported by these datasets. %the proposed method is illustrated by analyzing the Tecator and AEMET data sets.
	\end{abstract}
	
	\textbf{Keywords:}  Functional data; random projections; residual marked empirical process; semi-functional partially linear regression models; wild bootstrap
	
	\textbf{JEL:} C12, C14, C15, C22
	
	\thispagestyle{empty}
	\newpage 
	\renewcommand{\thefootnote}{\arabic{footnote}}
	
	\section{Introduction}
	Functional Data Analysis (FDA) has gained increasing attention over the last two decades due to the frequently encountered type of data recorded continuously during a time interval or intermittently at several discrete time points, see, e.g., \cite{RS05}, \cite{ferraty2006nonparametric},  \cite{horvath2012inference}, and more recently   \cite{wang2016functional}, 	for the development of its theory and applications.
	
	%for basic methods, theories, and applications of FDA and \cite{horvath2012inference} for statistical inference with some FDA methods. The literature on FDA is particularly extensive and \cite{wang2016functional} is recommended for a comprehensive overview.
	
	In FDA,   an active area of research focuses on the functional linear model (FLM),  %functional data are related to scalar variables in many cases and it's of great interest 
	which assesses the relationship between the functional covariate and other variables via a regression model. The simplest, yet most commonly used model is FLM with a scalar response. A partial list of literature on this topic includes \cite{yao2005functional} and \cite{cai2006prediction}
	for estimation and prediction by functional principal component analysis (FPCA) approach, 
	and \cite{cardot2003testing} for  statistical inference based on hypothesis testing. However,
	sometimes a single functional covariate is far from enough to explain the response efficiently.
	Take, for example,  the Tecator data set  which consists of  the fat, water, and protein content of $215$ finely chopped meat samples, 
	as well as $215$ spectrometric curves measuring the absorbance at  850nm-1050nm wavelength.
	The goal is to predict the fat content of a meat sample using other variables. If only the spectrometric curve is included as a predictor,
	the proposed test of \cite{cuesta2019goodness}   leads to a rejection of the null of correct specification of FLM, namely, there is no statistically significant evidence that the absorbance curves can sufficiently interpret the fat content.
	
	%	In some situations, a single functional covariate is far from enough to explain the response. 
	To address the above issue brought by \textquotedblleft mixed data\textquotedblright, which indicates that both a vector of finite length random variables and a function-valued random variable on each individual are of great interest,
	three main streams of models containing both functional and scalar covariates  have been proposed.  
	The first one is  a partially functional linear regression model with    linearity in both functional and scalar predictors, see, e.g., 	\cite{shin2009partial}, \cite{kong2016partially} and \cite{li2020inference}.
	The second kind combines the nonparametric regression for scalar covariates with a standard FLM component, 
	which is called the functional partial linear regression model,
	see, e.g., \cite{lian2011functional}.  In this paper, we focus our attention on the third type, 
	the semi-functional partially linear regression model (SFPLR), which has the form given as follows:
	\begin{equation} 
		\label{SFPLR}
		Y  = \gamma(Z_1,\cdots, Z_p, \bX) + \varepsilon = \bZ^{\top} \bbeta + m(\bX) +\varepsilon, 	
	\end{equation}
	where $\bZ = \left(Z_1, \cdots, Z_p\right)^{\top}$ is a vector of real explanatory variables, $\bX$ is another explanatory variable but of functional nature, $\varepsilon$ is a random error satisfying $\E\left(\varepsilon| \bZ,\bX\right) = 0$ almost surely ($a.s.$) by construction, $\bbeta = \left(\beta_1, \cdots, \beta_p\right)^{\top}$ is a vector of unknown real parameters and $m(\cdot)$ is an unknown %smooth real 
function.

The SFPLR model enjoys wide application since it perfectly captures the flexibility of the nonparametric functional model and  the interpretability of standard linear regression.
 Parameter estimation and its  corresponding asymptotic properties in SFPLR have been well developed, see, e.g., \cite{aneiros2006semi}, \cite{aneiros2008nonparametric}, \cite{boente2017robust}, and \cite{aneiros2018bootstrap}, while the goodness-of-fit tests of SFPLR are rather rare in the literature. 
 The term \textquotedblleft goodness-of-fit test\textquotedblright \  was first coined by Pearson for testing if a data distribution
  belongs to a certain parametric family and has become an important step in model analysis, %advanced considerably  ever since the two essential works of 
see, e.g.,  \cite{bickel1973some} and \cite{durbin1973weak} for basic ideas and \cite{gonzalez2013updated}  for a comprehensive review. In this paper, the goodness-of-fit goal  is to test the linear functional 
  nature of the SFPLR model with the  null hypothesis  given by 
	\begin{equation}
	\label{hypothesis}
	H_0: m(\bx) = \langle \bx,\brho \rangle  \quad \text{for all}~\bx\in\cH \text{ and for some}~ \brho \in \cH,
	\end{equation}
while the alternative hypothesis $H_1$ is the negation of $H_0$, namely, $H_1: m(\bx) \neq \langle \bx,\brho \rangle$ for some $\bx\in\cH$ and for any $\brho \in \cH$. Here, $\cH$ is a general separable Hilbert space endowed with the inner product $\langle \cdot,\cdot \rangle$, and the functional covariate $\bX$ in \eqref{SFPLR} also takes value in $\cH$. It is noteworthy that testing $H_0$ against $H_1$ is quite general within the framework of SFPLR in \eqref{SFPLR},  which includes as a special case testing the significance of functional covariate $\bX$ on $Y$ when $\brho = \mathbf{0}$ in \eqref{hypothesis} and reduces to the goodness-of-fit test for FLM adequacy when $\bbeta = 0$ in \eqref{SFPLR}.

 Testing linearity is  of great significance since many datasets  in %economics or other fields 
 scientific fields are not large enough to guarantee accurate nonparametric estimation and the partially linear models provide feasible and flexible alternatives in the presence of high-dimensional covariates.  Usually, some of the covariates are likely to  enter the model linearly. In terms of testing the linearity for regression models, the first type focuses on local smoothing-based tests using distances between estimated regression functions under the null and under the alternative, which was employed in  \cite{hardle1993comparing}, \cite{Fan1996}, \cite{zheng1996consistent} among many others.  Another perspective is to consider proper norms of the distance between the estimated integrated regression function and its version under the null, leading to tests based on the residual marked empirical process, see, e.g., \cite{stute1997nonparametric} and \cite{stute1998model}. However, it is worth noting that the goodness-of-fit tests in the presence of functional covariates are always intricate since the power performance may be greatly deteriorated due to the infinite dimensionality nature of functional data.
  \cite{cardot2003testing}, \cite{delsol2011structural}, and \cite{hilgert2013minimax} investigated the significance testing of the functional covariate $\bX$ on $\bY$ from different perspectives.
  %Due to the complexity of functional data processing, the literature on GOF tests involving functional variables is rather sparse.
 In the context of the FLM goodness-of-fit test,  %a class of parametric regression functions with a functional covariate,  
  \cite{patilea2012projection}, enlightened by \cite{escanciano2006consistent} for the finite-dimensional predictors, put forward the idea of random projections to alleviate the complexity of functional data and proposed a functional version of the local smoothing-based test to check the linearity in the functional covariate, while \cite{garcia2014goodness} constructed a test based on the projected empirical process taking advantage of the Cram\'{e}r--von Mises norm without theory. 
   \cite{cuesta2019goodness} successfully  derived the  weak convergence 
   results by employing the random projection methodology for marked empirical process and utilizing the functional-coefficient estimation proposed in \cite{cardot2007clt}. However, the testing procedures in all the above works were investigated under the pure FLM and lacked theoretical power analysis for the alternatives. To our best knowledge, there is no literature on goodness-of-fit tests for linearity under the framework of the SFPLR model. Therefore, a practical, computationally efficient, and theoretically reliable testing method is urgently called for to deal with the analysis of the SFPLR model.
	
%The effects of the association between the mixed type covariates is considered a major generalization. 
In this paper, we employ random projections to test the null hypothesis in order to overcome the well-known ``curse of dimensionality'' of the functional covariate, which is achieved by considering the inner product of the functional variable $\bX$ and a suitable family of projectors $\bh\in\cH$. Lemma \ref{lemma0} of Section \ref{method} indicates that only a finite number of random projections is enough for the characterization of the null hypothesis, making it feasible in practice. To construct tests from the residual marked empirical process based on projections, which are robust to the unknown dependence between the real and functional covariates, a two-step procedure is proposed for parameter estimation. Specifically, one first estimates $\bbeta$ under the alternative model, namely, one estimates $\bbeta$ under the SFPLR model \eqref{SFPLR}. The reason is that the null model and the alternative model  share identical  partially linear terms,  then the robust estimator $\tilde\bbeta$ obtained from the SFPLR model [see equation \eqref{beta_estimate}] is always consistent irrespective of the model functional form of $m(\bX)$. %in spite of model misspecification.  
After plugging $\tilde\bbeta$ into the model \eqref{SFPLR}, it reduces to a classical FLM and one can easily get the regularized estimate for $\brho$. 

Finally, our test statistics are built via the classical Kolmogorov--Smirnov (KS) and the Cram\'{e}r--von Mises (CvM) norms adapted to the residual marked empirical process, with the added bonus of $n^{1/2}$-rate weak convergence as established in Theorem \ref{theorem1}. Indeed, different from the local smoothing-based tests, our proposed tests are global in nature, with their limiting null distributions enjoying faster convergence rates and free of user-chosen tuning parameters (such as the bandwidths required in the local smoothing-based tests whose proper choice may depend on the underlying data-generating process), and thus our tests are more robust in practical data analysis. In addition, the proposed test statistics are easy to compute and the corresponding critical values can be estimated using a straightforward wild bootstrap procedure. %simple to calibrate in its distribution by a wild bootstrap on the residuals.	
The asymptotic properties of our tests under a fixed alternative hypothesis and a sequence of the local alternative hypotheses converging to the null hypothesis at the $n^{1/2}$ rate are also investigated. 
To minimize the potential influence of the projection direction $\bh$ and to achieve higher testing power,  in the simulations and the real data analysis we suggest choosing %a number of 
$K=7$ different random directions and then adjust the final $p$-value by the false discovery rate (FDR) method proposed in  \cite{benjamini2001control} %to achieve higher testing power. 
Extensive simulations yield attractive results in terms of empirical sizes and powers that strongly corroborate the asymptotic theory. 
  %Our tests are able to detect a broad class of local alternatives at a parametric rate of root-n. The wide bootstrap on the residuals is implemented in the simulation section and $K$ different directions of random projections are applied for the lower influence of the choice of $\bh$ and higher power based on the false discovery rate (FDR) introduce by \cite{benjamini2001control}. And the empirical analysis reports competitive results.

The rest of the paper is organized as follows. The testing framework including the hypothesis projection and the two-step estimation procedure is introduced in Section \ref{method}.  Section \ref{asy} presents the asymptotic properties of the proposed test statistics under the null, the alternative, and a sequence of local alternatives. %under mild assumptions. 
The practical aspects of implementing the tests are given in Section \ref{practical_aspects}, with a detailed discussion of parameter estimation, selection of projection directions, and estimation of critical values through the wild bootstrap procedure.
%Section 4 is the implementation part covering parameter estimation, projection directions selection, and obtaining critical values by the wild bootstrap procedure. 
Section \ref{sim} reports simulation findings, together with the empirical analysis of  the Tecator and AEMET datasets. Finally, Section \ref{con} concludes the paper.  %Concluding remarks is in section 6. 
All mathematical proofs are collected in the Appendix.
%The Appendix presents the main proofs, whereas the Supplementary Material contains the auxiliary lemmas and further results from the simulation study.

	\section{Methodology}
	\label{method}
Throughout this paper, given $\bh\in\cH$ ($\bh$ can be a random element), we denote by $\bX^{\bh}=\langle \bX,\bh \rangle$ the projected $\bX$ on the direction $\bh$. For any $\bX\in \cH$, denote its norm by $\Vert \bX \Vert=\langle \bX,\bX \rangle^{1/2}$.  For any $p$-dimensional real vector $\mathbf{a}=(a_{1},\ldots,a_{p})^\top \in \mathbb{R}^{p}$ with $p\geq 1$, denote the Euclidean norm by $\Vert \mathbf{a}\Vert =(a_{1} ^{2}+ \cdots  +a_{p} ^{2})^{1/2}$.
	
	\subsection{Hypothesis projection}	
	In this section, we introduce a (mixed) residual marked empirical process indexed by random projections of the functional covariate $\bX$ to construct robust specification tests for the linearity of $m(\bX)$ in the SFPLR model \eqref{SFPLR}. %goodness-of-fit test. %that will serve as a basic test process. 
	To this end, note that the null hypothesis $H_0$ in (\ref{hypothesis}) %we presented above 
	can be equivalently expressed as $H_0:\E[Y - \bZ^{\top}\bbeta - \bX^{\brho}  \vert \bX] = 0$ $a.s.$, which can also be characterized by means of the associated projected hypothesis on a randomly chosen direction $\bh\in\cH$, defined as $H_{0}^{\bh}: \E[Y -\bZ^{\top} \bbeta -\bX^{\brho} \vert \bX^{\bh}] = 0$ $a.s.$. For notational simplicity, we denote by $U=Y - \bZ^{\top} \bbeta - \bX^{\brho}$ in the rest of the article. Note that the error term $\varepsilon$ in \eqref{SFPLR} satisfies $\varepsilon=U$ $a.s.$ under the null $H_0$. The following important lemma specifies the necessary and sufficient condition such that  $\E[U  \vert \bX] = 0$ holds $a.s.$ based on projections of $\bX$.
	
	\begin{lemma} (Theorem 2.4, \cite{cuesta2019goodness}) \label{lemma0}
		Let $\mu$ be a non-degenerate Gaussian measure on $\cH$ and $\bX$
		be a $\cH$-valued random variable (r.v.) defined on a probability space $(\Omega,\sigma,\nu)$. 
		Assume that $m_k:=\int\Vert \bX\Vert^k d\nu <\infty$ for all $k\ge1$ with $\sum_{k=1}^\infty m_k^{-1/k}=\infty$, $\E \left\Vert \bZ \right\Vert ^2 < \infty$,  
		and $\E [Y^2]< \infty$.
		Denote $\mathcal{A}_0 := \{\bh \in \cH: \E[U| \bX^{\bh}]=0 \  a.s. \} $, 
		then %by the proposition given later, we have
		\begin{equation*}
			\E\left[U \vert \bX\right] = 0 \quad \ a.s. \iff \mu(\mathcal{A}_0) > 0.
		\end{equation*}
	\end{lemma}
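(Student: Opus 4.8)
The plan is to dispatch the easy direction by the tower property and to recast the hard direction as a Cram\'er--Wold statement for a signed ``marked'' measure, which I would then resolve through a moment-determinacy argument followed by a Gaussian null-set argument. The forward implication ($\Rightarrow$) is immediate: if $\E[U\mid\bX]=0$ $a.s.$, then for every $\bh\in\cH$ we have $\sigma(\bX^{\bh})\subseteq\sigma(\bX)$, so $\E[U\mid\bX^{\bh}]=\E[\E[U\mid\bX]\mid\bX^{\bh}]=0$ $a.s.$, whence $\mathcal{A}_0=\cH$ and $\mu(\mathcal{A}_0)=1>0$.

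For the converse I would first record that the stated assumptions force $\E[U^2]<\infty$ (by Cauchy--Schwarz, $\E(\bX^{\brho})^2\le\Vert\brho\Vert^2 m_2$, which combines with $\E[Y^2]<\infty$ and $\E\Vert\bZ\Vert^2<\infty$), so that $\E|U|<\infty$ and the finite signed Borel measure $\gamma(B):=\E[U\,\mathds{1}\{\bX\in B\}]$ is well defined on $\cH$. Two elementary observations linearize the problem: $\E[U\mid\bX]=0$ $a.s.$ is equivalent to $\gamma\equiv 0$, and $\bh\in\mathcal{A}_0$ is equivalent to the vanishing of the one-dimensional pushforward $\gamma_{\bh}$ of $\gamma$ under $\bx\mapsto\langle\bx,\bh\rangle$. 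It therefore suffices to prove the contrapositive of the nontrivial direction: if $\gamma\neq 0$, then $\{\bh:\gamma_{\bh}=0\}$ is $\mu$-null.

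So assume $\gamma\neq 0$. For $\bh\in\mathcal{A}_0$, pulling the $\sigma(\bX^{\bh})$-measurable factor $(\bX^{\bh})^{k}$ through the conditional expectation (legitimate since $\E[|U|\,|\bX^{\bh}|^{k}]\le\Vert U\Vert_2\Vert\bh\Vert^{k}m_{2k}^{1/2}<\infty$) gives $c_k(\bh):=\E[U\langle\bX,\bh\rangle^{k}]=0$ for all $k\ge 0$, where each $c_k$ is a continuous homogeneous polynomial of degree $k$ on $\cH$; hence $\mathcal{A}_0\subseteq Z_{k}:=\{\bh:c_k(\bh)=0\}$ for every $k$. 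I would then argue that some $c_{k_0}\not\equiv 0$: were all $c_k\equiv 0$, each real signed measure $\gamma_{\bh}$ would have every moment equal to zero and would satisfy the one-dimensional Carleman condition (transferred from $\sum_k m_k^{-1/k}=\infty$ via the log-convexity equivalence $\sum_k m_k^{-1/k}=\infty\Leftrightarrow\sum_k m_{2k}^{-1/(2k)}=\infty$ and the bound $\int|x|^{2k}\,d|\gamma_{\bh}|\le\Vert\bh\Vert^{2k}\Vert U\Vert_2 m_{4k}^{1/2}$), hence be moment-determinate and equal to $0$ for every $\bh$; injectivity of the characteristic functional, through $\widehat{\gamma}(\bh)=\widehat{\gamma_{\bh}}(1)$, would then force $\gamma=0$, a contradiction.

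With $k_0\ge 1$ fixed so that $c_{k_0}\not\equiv 0$ (if instead $c_0=\E[U]\neq 0$, then $Z_0=\emptyset\supseteq\mathcal{A}_0$ and we are done), the proof reduces to showing $\mu(Z_{k_0})=0$, since $\mathcal{A}_0\subseteq Z_{k_0}$. This is where non-degeneracy of $\mu$ is essential: choosing $\bh_0$ in the dense Cameron--Martin space of $\mu$ with $c_{k_0}(\bh_0)\neq 0$, the one-variable polynomial $s\mapsto c_{k_0}(\bh+s\bh_0)$ has, since $c_{k_0}$ is homogeneous of degree $k_0$, the coefficient $c_{k_0}(\bh_0)\neq 0$ on $s^{k_0}$, and therefore at most $k_0$ real zeros for every $\bh$; disintegrating $\mu$ along its non-atomic Gaussian marginal in the direction $\bh_0$ and integrating the (finite, hence one-dimensional-null) zero-set fibre-wise yields $\mu(Z_{k_0})=0$, completing the contradiction. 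I expect this last step to be the main obstacle: it carries the ``sharp'' content of the lemma, requiring the non-degeneracy of $\mu$ to be converted---through the Cameron--Martin structure and a disintegration---into the statement that the zero set of a nonzero continuous polynomial is $\mu$-negligible, whereas the moment-determinacy step merely invokes the classical one-dimensional Carleman criterion one projection at a time.
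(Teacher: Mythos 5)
The paper itself never proves this lemma: it is imported verbatim, with attribution, as Theorem 2.4 of \cite{cuesta2019goodness}, so there is no internal proof to compare against, and what you have written is in effect a self-contained reconstruction of the cited result. Your reconstruction is correct, and it tracks the strategy of the source literature rather than inventing a new one: the published proof handles the ``hard'' direction by splitting $U$ into positive and negative parts, forming the two finite measures $B \mapsto \E\left[U^{+}\mathds{1}_{\{\bX \in B\}}\right]$ and $B \mapsto \E\left[U^{-}\mathds{1}_{\{\bX \in B\}}\right]$, and invoking the sharp Cram\'er--Wold theorem of Cuesta-Albertos, Fraiman and Ransford (2007) for pairs of measures whose projections agree on a set of positive Gaussian measure; you instead work directly with the signed marked measure $\gamma(B)=\E\left[U\,\mathds{1}_{\{\bX\in B\}}\right]$ and inline the ingredients of that theorem yourself (moment polynomials $c_k$, transfer of the Carleman condition through log-convexity, injectivity of the characteristic functional, and the fact that the zero set of a nonzero continuous homogeneous polynomial is null under a non-degenerate Gaussian measure, proved by picking $\bh_0$ in the dense Cameron--Martin space and disintegrating along lines parallel to $\bh_0$). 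The citation route is shorter and delegates the delicate steps; your route has the merit of making visible exactly where each hypothesis is consumed --- $\sum_k m_k^{-1/k}=\infty$ only for one-dimensional determinacy, non-degeneracy of $\mu$ only to make the Cameron--Martin space dense. Two points deserve to be stated as known theorems rather than left implicit: the determinacy step for \emph{signed} measures should be phrased via the Jordan decomposition (both parts inherit the Carleman bound from $\vert\gamma_{\bh}\vert$ and share all moments, hence coincide), which your bound already delivers; and the fibre-wise integration is legitimate because conditional measures of a Gaussian measure on lines $\bh+s\bh_0$ with $\bh_0$ in the Cameron--Martin space are absolutely continuous (Bogachev, \emph{Gaussian Measures}, \S 3.10) --- your phrase ``disintegrating along its marginal in the direction $\bh_0$'' should be understood as conditioning on a complement of $\mathbb{R}\bh_0$, i.e.\ as a statement about conditional measures on lines parallel to $\bh_0$, not about the law of $\langle\cdot,\bh_0\rangle$ itself, whose fibres are hyperplanes.
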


  \begin{remark}
    It is obvious that %the holding of 
    $H_0$ holds ensures that $H_0^{\bh}$ holds for every $\bh \in \cH$. And the above lemma indicates that if $H_0$ fails, then $\mu(\mathcal A_0)=0$, implying that with probability one, one can choose a projection $\bh$ such that $H_0^{\bh}$ fails. Thus we build the $\mu$-$a.s.$ equivalence between the original null hypothesis $H_0$ and its projected version $H_0^{\bh}$. 
  \end{remark}
	This enables us to test the null hypothesis $H_0$ by first randomly choosing a projection direction $\bh\in\cH$ and then testing the projected null hypothesis conditional on $\bh$, namely $H_0^{\bh}:\E[U \vert \bX^{\bh}] = 0 $ $a.s.$. It is also clear that in $H_0^{\bh}$ the conditioning variable $\bX^{\bh}$ is real, which circumvents the ``curse of dimensionality'' of functional covariate $\bX$, greatly simplifying the testing problem. Nevertheless, sometimes it is possible that the power of the resulting tests could be sensitive to the selected projection. To minimize the influence of the projection direction and to enhance testing power, we suggest choosing several different directions. A detailed selection procedure on the projection directions is discussed in Section \ref{practical_aspects}.
 
 %As such, how to choose the projection direction $\bh$ is also crucial in practice. We discuss the detailed selection procedure in Section 4. 
	
	In light of Lemma \ref{lemma0}, given $n\geq 1$ independent and identically distributed  (i.i.d.) observations $\{Y_i, \bZ_{i}, \bX_i\}_{i=1}^n$ with $\bZ_i=(Z_{i1},\cdots, Z_{ip})^{\top}$, we consider the following mixed-type residual marked empirical process built upon the randomly projected functional covariate $\bX_{i}^{\bh}$:
	\begin{equation}
		T_{n,\bh}(x) := \frac{1}{\sqrt n} \sum_{i=1}^{n} {\mathds{1}}_{\left\{\bX_{i}^{\bh} \le x\right\}} \left(Y_i -   \bZ_{i}^{\top} \tilde{\bbeta} - \bX_{i}^{\hat{\brho}}\right), \quad x\in\mathbb R,\label{rmep}
	\end{equation}
where the  estimator $\tilde{\bbeta}$ for $\bbeta$ and the estimator $\hat{\brho}$ for $\brho$ are given in Section 2.2, and ${\mathds{1}}_{\{A\}}$ is the indicator function of the event $A$. To guarantee the consistency of the tests based on $T_{n,\bh}(x) $ against all fixed alternatives, it is crucial that $\tilde{\bbeta}$ should be a robust-type estimator in the sense that it is always consistent regardless of whether the null hypothesis is satisfied. That is, $\tilde\bbeta$ should be obtained under the SFPLR model in \eqref{SFPLR} rather than under the null model $Y  = \bZ^{\top} \bbeta + \bX^{\brho} +U$ with $U=\varepsilon$ $a.s.$, given that $\bZ$ and $\bX$ may have some unknown dependence structure.
	%where $a_n \to 0$ is a normalizing positive sequence to be determined later.
	
	Our test statistics are suitable continuous functionals of $T_{n,\bh}(x)$. In this paper, we focus on the popular KS type and  CvM type statistics, which are given by 
	$$\Vert T_{n,\bh} \Vert_{KS} := \sup_{x \in \mR} \vert T_{n,\bh}(x) \vert$$ 
	and 
	$$\Vert T_{n,\bh} \Vert_{CvM} := \int_{\mR} T_{n,\bh}(x)^2 dF_{n,\bh}(x),$$
	respectively, where $F_{n,\bh}(x)=n^{-1}\sum_{i=1}^n {\mathds{1}}_{\{\bX_{i}^{\bh} \le x\}}$ is the empirical distribution function (EDF) based on the randomly projected functional covariate $\{\bX_{i}^{\bh}\}_{i=1}^n$. The null hypothesis $H_0$ is rejected whenever the test statistics $\Vert T_{n,\bh} \Vert_{KS}$ and $\Vert T_{n,\bh} \Vert_{CvM}$ exceed some ``large'' values, which are consistently estimated using a wild bootstrap procedure, as described in Section 4.3.

	\subsection{Two-step Estimation Procedure}%Parameter  estimation} 
	
	Throughout this paper,  denote by $\mathcal H'$ the space of continuous linear operators defined in $\mathcal H$ and valued in $\mathbb R$. By Riesz's representation theorem, one can identify the spaces $\mathcal H$ and $\mathcal H'$, together with the induced norm of $\mathcal H'$
	%$\mathcal H'$ the norm induced 
	by the following identification condition: for $\mathbf T \in \mathcal H'$, $\Vert \mathbf T\Vert_{\mathcal H'} = \Vert \boldsymbol\tau \Vert_{\mathcal H}$, where $\boldsymbol \tau$ is the unique element in $\mathcal H$ such that $\mathbf T (x) = \langle\boldsymbol \tau,\bx \rangle$, $\bx \in \mathcal H$. For $\mathcal H$-valued continuous linear operator $\boldsymbol \Phi$ and continuous linear operator $\boldsymbol \Psi$ defined in $\mathcal H$, with a slight abuse of notation, define $\boldsymbol\Phi\boldsymbol\Psi(\bz) := \boldsymbol\Psi\left(\boldsymbol\Phi(\bz)\right)$, $\bz \in \mathcal H$,
	%$\boldsymbol\Phi\boldsymbol\Psi$ as 
	representing $\boldsymbol\Psi$ composed with $\boldsymbol\Phi$.
	%, that is $\boldsymbol\Phi\boldsymbol\Psi(\bz) := \boldsymbol\Psi\left(\boldsymbol\Phi(\bz)\right)$, $\bz \in \mathcal H$.
	
	In order to construct the feasible empirical process $T_{n,\bh}(x)$ in \eqref{rmep},   we first need to obtain an appropriate estimator for $\bbeta$ in the SFPLR model (\ref{SFPLR}).
	%We begin with the estimation of $\bbeta$ in SFPLR model (\ref{SFPLR}). 
	%Take conditional expectation w.r.t. $\bX$ on both side of (\ref{SFPLR}), we have 
	%\begin{equation*}
	%   E(Y \vert \bX) = E\left( \sum_{j=1}^{p} Z_j\beta_j \vert \bX \right) + m(\bX) + E(\varepsilon \vert \bX)
	%\end{equation*}
	%Subtract (\ref{SFPLR}) by the above equation and note that $E(\varepsilon \vert \bX)=0$, we get
	Note that (\ref{SFPLR}) implies
	\begin{equation*}
		Y-\E(Y \vert \bX) = \sum_{j=1}^{p} \left[ Z_j - \E(Z_j \vert \bX) \right] \beta_j + \varepsilon.
	\end{equation*}
	Then, after plugging in the nonparametric estimators for $\E(Y \vert \bX)$ and $\{\E(Z_j \vert \bX)\}_{j=1}^{p}$,  %we can obtain 
	the resulting least squares type estimator for $\bbeta$  is given by %by the least square solution
	\begin{equation}
		\label{beta_estimate}
		\tilde{\bbeta} = \left(\widetilde{\bZ}_{b}^{\top}\widetilde{\bZ}_b\right)^{-1}\widetilde{\bZ}_{b}^{\top}\widetilde{\bY}_b,
	\end{equation}
	where the subscript $b=b_n\in\mathbb R^+$ represents the bandwidth parameter that converges to zero at some appropriate rate as the sample size $n$ diverges to infinity. In \eqref{beta_estimate}, $\widetilde{\bZ}_b=\cZ-\bW_b\cZ :=(\widetilde{\bZ}_1,\ldots,\widetilde{\bZ}_n)^{\top}$ and $\widetilde{\bY}_b=\bY-\bW_b\bY:=(\widetilde Y_1,\ldots,\widetilde Y_n)^\top$, where $\cZ=(\bZ_1,\ldots,\bZ_n)^{\top}$, $\bY=(Y_1,\ldots,Y_n)^{\top}$, and $\bW_b=(w_{n,b}(\bX_i,\bX_j))_{i,j}$ is an $n\times n$ matrix with $w_{n,b}(\cdot,\cdot)$ being the functional covariate version of the Nadaraya--Waston weighting function that has the following form:
	\begin{equation*}
		w_{n,b}(t,\bX_i)=\frac{K(d(t,\bX_i)/b)}{\sum_{j=1}^{n} K(d(t,\bX_j)/b)},
	\end{equation*}
	where $K(\cdot)$ is a kernel function from $[0,\infty)$ to $[0,\infty)$, and $d(\cdot,\cdot)$ is a semi-metric in Hilbert space $\cH$. It is noteworthy that $\tilde{\bbeta}$ in \eqref{beta_estimate} is a robust type estimator for $\bbeta$ because it is obtained under the unrestricted model  (\ref{SFPLR}) and thus is always consistent regardless of whether the null hypothesis holds or not.
	
	%Having derived the estimator $\tilde{\bbeta}$ of $\bbeta$, we
	Plugging $\tilde{\bbeta}$ into model (\ref{SFPLR}), under $H_0$, the model is transformed into a classical function linear model
	\begin{equation*}
		\widetilde{D}= \bX^{\brho} + v,
	\end{equation*}
	where  $\widetilde{D}=Y-\bZ^{\top}\tilde{\bbeta}$ is the generated (i.e., estimated) dependent variable and $v=\varepsilon+\bZ^{\top}(\bbeta-\tilde{\bbeta})$ is the composite error term.
	Finding an estimator for $\brho$ then means seeking the solution to the following minimization problem:
	\begin{equation*}
		\inf_{\brho \in \cH} \E  \left\lvert \widetilde{D}-\bX^{\brho} \right\rvert^2.
	\end{equation*}
	Enlightened by the simple linear regression, $\brho$ is determined by the  moment equation
	%\begin{equation}
	$\Delta=\Gamma\brho$,
	%\end{equation}
	where $\Delta$ is the cross-covariance operator of $\bX$ and $\widetilde{D}$, defined as $\Delta(\bz):= \E[(\bX \otimes \widetilde{D}) (\bz)]$ for $\bz \in \cH$ and $\Gamma$ is the covariance operator of $\bX$ given by $\Gamma(\bz) := \E[(\bX\otimes \bX)(\bz)]$ for $\bz\in \cH$, with $\otimes$ being the Kronecker operator defined as $(\bx \otimes \by)\bz = \langle \bz,\bx \rangle \by$ for  $\bx ,\bz \in \cH $ and for $\by$ belonging to either $\cH$ or $\mathbb R$.
	
	Obviously, the estimation of $\brho$ needs the reversibility of operator $\Gamma$, which is nonnegative, self-adjoint, and nuclear, and thus Hilbert--Schmidt and thus compact. However, due to the infinite-dimensional nature of Hilbert space $\cH$, a bounded inverse of $\Gamma$ does not exist.
%One effective approach is	
The regularization method proposed in  \citet{cardot2007clt} can effectively tackle this ill-posed issue, for which we 
	first consider  the Karhunen--Lo\'eve expansion of $\bX$, as given by
	\begin{equation} \label{KL}
		\bX = \sum_{j=1}^{\infty} \lambda_j^{1/2}\xi_j\be_j, 
	\end{equation}
	where $\{\be_j\}_{j=1}^{\infty}$ is a sequence of orthonormal eigenfunctions of $\Gamma$ and $\{\xi_j\}_{j=1}^{\infty}$ are centered real r.v.'s such that $\E[\xi_j\xi_{j'}]=\delta_{jj'}$, where $\delta_{jj'}$ is the Kronecker's delta. Assuming the multiplicity of each eigenvalue is one, there then exists a sorted sequence of % nonnull 
	distinct eigenvalues $\lambda_1 > \lambda_2> \cdots > 0$ of $\Gamma$.
	
	%However, due to the  infinite dimension nature of Hilbert space $\cH$, a bounded inverse of $\Gamma$ does not exist. 
	To ensure the existence and uniqueness of $\brho$, Assumptions (B1) and (B2) in the next section are required.
	%We adopted the regularization method proposed by \citet{cardot2007clt}, 
	%to get an available estimation of $\brho$. 
	%the key to which is to derive 
	An empirical finite rank estimator $\Gamma_{n}^{\dag}$ for $\Gamma^{-1}$  can be derived by the following procedure: 
\begin{itemize}	
\item[(i)] 	 Compute the functional principal components (FPC) of $\{\bX_i\}_{i=1}^{n}$, that is, calculate the eigenvalues $\{\hat{\lambda}_j\}$ and eigenfunctions $\{\hat{\be}_j\}$ of $\Gamma_n$ with $\Gamma_n = n^{-1} \sum_{i=1}^{n} \bX_i \otimes \bX_i$;
	
\item[(ii)]  Define a sequence $\{\delta_j\}$, with $\delta_1:=\lambda_1-\lambda_2$ and $\delta_j := \min(\lambda_j-\lambda_{j+1},\lambda_{j-1}-\lambda_j)$ for $j\geq 2$, and consider a sequence of thresholds $c_n \in (0,\lambda_1), n \in \mathbb{N}$, with $c_n \to 0$, then set 
	\begin{equation}
		\label{kn}
		k_n := \sup\left\{ j \in \mathbb{N}: \lambda_j+\delta_j/2 \ge c_n \right\};
	\end{equation}

\item[(iii)] Compute $\Gamma_{n}^{\dag}$ by the following equation:
	\begin{equation*}
		\Gamma_n^{\dag} = \sum_{j=1}^{k_n} \frac{1}{\hat{\lambda}_j}\hat{\be}_j\otimes\hat{\be}_j.
	\end{equation*}
\end{itemize}

Observe that $\Gamma_{n}^{\dag}$ is the population version of $\Gamma^{\dag}$, which is defined as $\Gamma^{\dag}=\sum_{j=1}^{k_n}\lambda_j^{-1}{\be}_j\otimes{\be}_j$. Together with the empirical finite rank estimator $\Delta_n$ for $\Delta$, denoted by $\Delta_n:= n^{-1}\sum_{i=1}^{n} \bX_i \otimes \widetilde{D}_i$, the regularized estimator $\hat{\brho}$ for $\brho$ is given by %and the form of estimator of $\brho$ is given as following:
	\begin{equation}
		\label{rhohat}
		\hat{\brho} := \Gamma_{n}^{\dag}\Delta_{n} = \frac{1}{n} \sum_{j=1}^{k_n}\sum_{i=1}^{n} \frac{\langle \bX_i \widetilde D_i, \hat{\be}_j\rangle}{\hat{\lambda}_j}\hat{\be}_j.
	\end{equation}

Having obtained both estimators $\tilde{\bbeta}$ and $\hat{\brho}$, we can readily compute the residual $\hat{U}_i = Y_i - \bZ_i^{\top} \tilde{\bbeta} - \bX_{i}^{\hat{\brho}}$ for $i=1,\cdots,n$ to construct the residual marked empirical process $T_{n,\bh}(x)$ and the test statistics based on it. It is worthwhile to emphasize again that the residual $\hat{U}_i $ is a mixed type (and thus robust) residual in the sense that $\tilde{\bbeta}$ is obtained under the alternative while $\hat{\brho}$ is obtained under the null. As such, in $\hat{U}_i$ the unknown dependence structure between the real covariate $\bZ$ and the functional covariate $\bX$ has been taken into account through $\tilde{\bbeta}$ even if the alternative hypothesis is true. This is in sharp contrast to the standard residual obtained under the null model $Y  = \bZ^{\top} \bbeta + \bX^{\brho} +U$, say, $\breve{U}_i = Y_i - \bZ_i^{\top} \breve{\bbeta} - \bX_{i}^{\breve{\brho}}$ for some $\breve{\bbeta}$ and $\breve{\brho}$ obtained when the null holds. Indeed, using the mixed type residual $\hat{U}_i$ in our semiparametric model \eqref{SFPLR} is very important and guarantees a consistent testing procedure that is robust to the presence of real covariate $\bZ$ in \eqref{SFPLR}.

	\section{Asymptotic properties}
	\label{asy}
	\subsection{Technical assumptions}
	To study the asymptotic properties of the proposed test statistics based on $T_{n,\bh}(x)$  such as $\Vert T_{n,\bh} \Vert_{KS}$ and $\Vert T_{n,\bh} \Vert_{CvM}$ defined in Section \ref{method}, 
	we impose the following technical assumptions.
	
	\

	%\begin{itemize}
\noindent\emph{Regularity assumptions}
	\begin{itemize}
		\item[(A1)]  $m_k := \int \Vert \bX \Vert^k d\nu < \infty$ for all $k \ge 1$, and $\sum_{k=1}^{\infty} m_{k}^{-1/k} = \infty$. 
		\item[(A2)] The first and the second moments of $\varepsilon$ given $\bX$ and $\bZ$ are equal to $\E\left(\varepsilon \vert \bX,\bZ\right)=0$ $a.s.$ and $\E\left(\varepsilon^2 \vert \bX,\bZ\right)=\sigma_{\varepsilon}^2$ $a.s.$, respectively. 
\end{itemize}	

\noindent Assumption (A1) is a condition similar to that required in Theorem 3.6 of \cite{cardot2007clt} to guarantee the validity of hypothesis projection in Lemma \ref{lemma0}. Assumption (A2) gives the first two moment constraints of $\varepsilon$ given $\bX$ and $\bZ$. In particular, the condition $\E\left(\varepsilon \vert \bX,\bZ\right)=0$ $a.s.$ guarantees that we are testing linearity of the nonparametric functional component within the framework of the SFPLR model in \eqref{SFPLR}. It may be of some interest to test the correct specification of the SFPLR model itself by verifying whether $\E\left(\varepsilon \vert \bX,\bZ\right)=0$ $a.s.$ holds, which we leave as future research. Although not the weakest, the conditional homoskedasticity is much milder compared with the independence assumption imposed in \cite{cuesta2019goodness}. 	

\

\noindent\emph{Assumptions on the estimator $\tilde\bbeta$ in \eqref{beta_estimate}}	
	\begin{itemize}	
		\item[(B1)] $\{\bX_i\}_{i=1}^{n}$ take value in some given compact subset $\cC \subset \cH$ such that $\cC \subset \cup_{k=1}^{\tau_n} \cB(x_k,l_n)$, where $\cB(x,h) = \left\{x' \in \cH: d(x',x) < h\right\}$ (with $d(\cdot,\cdot)$ being a semi-metric in $\cH$), $x_k's$ is a series of points in $\cH$, $\tau_n l_n^\gamma = C$, where $\gamma$ and $C$ are real positive constants and $\tau_n \to \infty$ and $l_n \to 0$, as $n \to \infty$.
		
		\item[(B2)] Define $g_j(x) = \E(Z_{ij}\vert \bX_i=x)$ and $\eta_{ij} = Z_{ij} - \E(Z_{ij} \vert \bX_i)$ for $j=1,\ldots,p$, and $\etab_i = \left(\eta_{i1}, \cdots, \eta_{ip}\right)^{\top}$. For $ (u,v) \in \cC \times \cC$, $ f \in \left\{m,g_1,\cdots,g_p\right\}$, there exist some $C < \infty$ and $\alpha > 0$ such that
		\begin{equation*}
			\vert f(u) - f(v) \vert \le C d(u,v)^{\alpha}.
		\end{equation*}
		There also exists $r \ge 3$ such that $\E\vert \varepsilon_1 \vert^r + \E\vert \eta_{11} \vert^r + \cdots + \E \vert \eta_{1p} \vert^r < \infty$. In addition, $\bB = \E(\etab_1\etab_1^{\top})$ is a $p\times p$ positive definite matrix.
		
		\item[(B3)] There exist a positive valued function $\phi(\cdot)$ on $(0,\infty)$ and positive constants $\alpha_0$, $\alpha_1$ and $\alpha_2$ such that 
		\begin{equation*}
			\int_{0}^{1} \phi(hs) ds > \alpha_0 \phi(h), \, \alpha_1 \phi(h) \le \mP \left(X \in \cB(x,h)\right) \le \alpha_2 \phi(h), \, \forall x \in \cC, \, h>0.
		\end{equation*}

		\item[(B4)]  The kernel function $K(\cdot)$ has support $[0,1]$, is Lipschitz continuous on $[0,\infty]$, and $\exists \, \theta>0$ such that $\forall \, u \in [0,1]$, $-K'(u)>\theta$. The bandwidth $b$ satisfies that  $nb^{4\alpha} \to 0$ as $n \to \infty$ and $\phi(b) \ge n^{\frac{2}{r}+d-1}/(\log n)^2$ for $n$ large enough and  some constant $d>0$ satisfying $2/r+d >1/2$. %\textcolor{red}{Dear both, it seems that only bias has been taken care of through $b\to 0$ and $nb^{4\alpha} \to 0$. What is the variance restriction for the bandwidth $b$, say, $nb \to \infty$? Pls, double-check this.}
\end{itemize}

\noindent Assumptions (B1)-(B4), mainly taken from \cite{aneiros2006semi}, allow us to obtain the standard $n^{1/2}$-rate asymptotic convergence of $\tilde{\bbeta}$ in \eqref{beta_estimate}. The compactness of $\cC$ in (B1) and the constraints in (B2) are regular conditions in the setting of nonfunctional partial linear models, see, e.g., \cite{chen1988convergence}, \cite{bhattacharya1997semiparametric} and \cite{liang2000asymptotic}, while requirements on $\tau_n$ and $l_n$ in (B1) are typical under the framework of functional nonparametric models, see, e.g., \cite{ferraty2006nonparametric}. %\textcolor{red}{Some brief discussions on (B3) would be perfect. } 
(B3) concerns the concentration properties of the small ball probability, which is associated with the semi-metric selection introduced in Chapter 13 of \cite{ferraty2006nonparametric}. %with details. 
The conditions for the kernel function $K(\cdot)$ in (B4) are commonly imposed, 
%conditions on the rate of 
while the rates of the bandwidth $b$ in (B4) are required to study the tradeoff between the bias and variance of the estimator $\tilde{\bbeta}$.  

\
		
\noindent\emph{Assumptions on the estimator $\hat\brho$ in \eqref{rhohat}}
	\begin{itemize}		
		\item[(C1)]  $\bX$, $\bZ$ and $Y$ satisfy $\sum_{j=1}^{\infty} \frac{1}{\lambda_j^2} \langle \E(\bX W),\be_j \rangle^2 < \infty$, for $W=Y$ or $Z_{ij}$, $j=1,2,\cdots,p$.
		\item[(C2)] The kernel of $\Gamma$ is $\left\{\mathbf{0}\right\}$.
		\item[(C3)] $\E \left(\Vert \bX \Vert^2\right) < \infty$ and $\E\left(\Vert\bZ\Vert^2\right) < \infty$.
		\item[(C4)] $\sum_{l=1}^{\infty} \vert \langle \brho,\be_l \rangle \vert < \infty$.
		\item[(C5)] For $j$ large enough, $\lambda_j = \lambda(j)$ with $\lambda(\cdot)$ a convex positive function.
		\item[(C6)] $\frac{\lambda_n n^4}{\log n} = \bO(1)$.
		\item[(C7)] $\inf \left\{\vert\langle\brho,\be_{k_n} \rangle\vert, \frac{\lambda_{k_n}}{\sqrt{k_n \log k_n}}\right\} = \bO(n^{-1/2})$.
		\item[(C8)] $\sup_{j} \left\{ \E\left(\vert \xi_j \vert^5\right)\right\} \le M < \infty$ for some $M \ge 1$.
		%\item \label{A19} There exist $C_1, \, C_2>0$, such that $C_1 n^{-1/2} < c_n < C_2 n^{-1/2}$ for every $n$.
		\item[(C9)] There exist $C_1$, $C_2 > 0$ such that $C_1 n^{-1/2} < c_n < C_2 n^{-1/2}$ for every $n$.

	\end{itemize}
	
\noindent	Assumption (C1) ensures the existence of a solution to $\Delta = \Gamma \brho$, and the set of solution is of the form $\brho + Ker (\Gamma)$, as shown in  \cite{cardot2003testing}. To further simplify the theory development, Assumption (C2) is imposed for identification.
%in order to be identifiable. 
Assumptions (C3)-(C8) are standard for the functional linear models, see, e.g., \cite{cardot2007clt}. In particular, (C5) implies that $\delta_j = \lambda_j - \lambda_{j+1}$ and holds for most classical decreasing rates  for eigenvalues, either polynomials or exponential. As such, (C5) is not restrictive. (C6) is analogous to an assumption in Theorem 2 in \cite{cardot2007clt}. (C7)  controls the order of  $\left\langle \bX, \bL_n \right \rangle$ [see Lemma A.7 in \cite{cuesta2019goodness}]. 
It can be easily deduced from (C8) that $\E (\Vert \bX \Vert^4) < \infty$, which is used to prove Lemma A.3 in the Appendix.  (C9) is able to control the behaviour of $k_n$, as shown by Lemma A.1 in \cite{cuesta2019goodness}, entailing that $k_n^3(\log k_n)^2 = o(n^{1/2})$ together with (C6).

%\begin{remark}A few comments on the regularity conditions are in order.	\end{remark}

\subsection{Asymptotic null distribution}	
	
In this section, we establish the asymptotic property of the projected residual marked empirical process $T_{n,\bh}(x)$ in \eqref{rmep} under $H_0^{\bh}$ as well as those of the test statistics based on $T_{n,\bh}(x)$ such as $\Vert T_{n,\bh}\Vert_{KS}$ and $\Vert T_{n,\bh}\Vert_{CvM}$.
First, we introduce two  necessary conditions for our theory development.

\begin{itemize}		
\item[(i)] 
$\lim_{n} t_{n,\E_{x,\bh}} < \infty$, where  $t_{n,\bx} = \sqrt {\sum_{j=1}^{k_n} \lambda_j^{-1} \left\langle \bx,\be_j\right\rangle^2}$, and $\E_{x,\bh}=\E \left(\ind \bX_1\right)$.

\item[(ii)] $\mathbb{E}\left[\left\Vert\hat\rho-\rho\right\Vert^4\right]=\mathcal{O}\left(n^{-2}\right)$.
\end{itemize}	

The following theorem states that $T_{n,\bh}(x)$ converges weakly to a centered Gaussian process with a complicated covariance function under the projected null $H_0^{\bh}$. %\textcolor{red}{Dear both, pls define the space of functions $D(\mR)$ in the following theorem.}

	\begin{theorem} \label{theorem1}
 Under $H_0^{\bh}$, Assumptions (A1)-(A2), (B1)-(B4) and  (C1)-(C9), additionally with  conditions (i) and (ii),
		it follows that $T_{n,\bh} \dto \cG$ in $D(\mR)$, where  $\cG$ is a centered Gaussian process with covariance function $K(s,t) \equiv C_1(s,t)-C_2(s,t)-C_3(s,t)-C_2(t,s)+C_4(s,t)+C_5(s,t)-C_3(t,s)+C_5(t,s)+C_6(s,t) $ , and $D(\mR)$ is the space of c\`{a}dl\`{a}g functions on $\mR$ that are continuous on the right with limit on the left, where 
        \begin{align*}
		C_1(s,t) &= \int_{\left\{(\bx,\bz): \bx^{\bh} \le s\wedge t \right\}} \Var\left[Y \vert \bX=\bx,\bZ = \bz \right] dP_{(\bX,\bZ)}(\bx,\bz), \\
		C_2(s,t)  &= \int_{\left\{(\bx,\bz): \bx^{\bh} \le s \right\}} \Var\left[Y \vert \bX = \bx, \bZ=\bz \right] \langle \bE_{t,\bh},\Gamma^{-1} \bx\rangle dP_{(\bX,\bZ)}(\bx,\bz), \\
		C_3(s,t)  &= \int_{\left\{(\bx,\bz): \bx^{\bh} \le s \right\}}  \Var\left[Y \vert \bX=\bx,\bZ = \bz \right] \left(\bE_{t,\bh}^{\bZ}+\bE_{t,\bh}^{\bX,\bZ}\right) \bB^{-1} \etab_1 dP_{(\bX,\bZ)}(\bx,\bz),\\
		C_4(s,t)  &= \int \Var\left[Y \vert \bX=\bx,\bZ = \bz \right] \langle \bE_{s,\bh},\Gamma^{-1} \bx \rangle  \langle \bE_{t,\bh},\Gamma^{-1} \bx \rangle dP_{(\bX,\bZ)}(\bx,\bz), \\
		C_5(s,t)  &= \int \Var\left[Y \vert \bX=\bx,\bZ = \bz \right] \langle \bE_{s,\bh},\Gamma^{-1} \bx \rangle \left(\bE_{t,\bh}^{\bZ}+\bE_{t,\bh}^{\bX,\bZ}\right) \bB^{-1} \etab_1 dP_{(\bX,\bZ)}(\bx,\bz),\\
		C_6(s,t)  &= \int \Var\left[Y \vert \bX=\bx,\bZ = \bz \right] \left(\bE_{s,\bh}^{\bZ}+\bE_{s,\bh}^{\bX,\bZ}\right) \bB^{-1} \left(\bE_{t,\bh}^{\bZ}+\bE_{t,\bh}^{\bX,\bZ}\right)^{\top} dP_{(\bX,\bZ)}(\bx,\bz).
	\end{align*}		
	\end{theorem}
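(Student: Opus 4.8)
The plan is to prove weak convergence by the usual two stages for a residual marked empirical process: first obtain a uniform-in-$x$ asymptotically linear representation (ALR) writing $T_{n,\bh}(x)$ as an i.i.d.\ sum of square-integrable, mean-zero influence functions plus a remainder that is $o_p(1)$ uniformly in $x$, and then combine finite-dimensional convergence with tightness in $D(\mR)$. First I would decompose the process under $H_0^{\bh}$. Substituting $Y_i=\bZ_i^\top\bbeta+\bX_i^{\brho}+\varepsilon_i$ into \eqref{rmep} gives
\begin{equation*}
T_{n,\bh}(x)=\frac{1}{\sqrt n}\sum_{i=1}^n\indi\,\varepsilon_i+\Big(\frac{1}{n}\sum_{i=1}^n\indi\,\bZ_i^\top\Big)\sqrt n(\bbeta-\tilde\bbeta)+\Big\langle\frac{1}{n}\sum_{i=1}^n\indi\,\bX_i,\ \sqrt n(\brho-\hat\brho)\Big\rangle,
\end{equation*}
the first term being the oracle marked process and the other two carrying the estimation errors of $\tilde\bbeta$ and $\hat\brho$. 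A uniform law of large numbers (the class $\{\indx:x\in\mR\}$ is VC) replaces the empirical averages by the deterministic limits $\bE_{x,\bh}^{\bZ}=\E[\indx\bZ^\top]$ and $\bE_{x,\bh}=\E[\indx\bX]$.

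Next I would substitute the two ALRs. For $\tilde\bbeta$, Assumptions (B1)--(B4) (as in \cite{aneiros2006semi}) give $\sqrt n(\tilde\bbeta-\bbeta)=\bB^{-1}n^{-1/2}\sum_i\etab_i\varepsilon_i+o_p(1)$. For $\hat\brho$ I would use its closed form: since $\widetilde D_i=\bX_i^{\brho}+\varepsilon_i+\bZ_i^\top(\bbeta-\tilde\bbeta)$ under $H_0$, one has $\Delta_n=\Gamma_n\brho+n^{-1}\sum_i\bX_i[\varepsilon_i+\bZ_i^\top(\bbeta-\tilde\bbeta)]$, whence
\begin{equation*}
\hat\brho-\brho=(\Gamma_n^{\dag}\Gamma_n-\mathrm{Id})\brho+n^{-1}\sum_i\Gamma_n^{\dag}\bX_i\varepsilon_i+\Big(n^{-1}\sum_i\Gamma_n^{\dag}\bX_i\bZ_i^\top\Big)(\bbeta-\tilde\bbeta).
\end{equation*}
The crucial point is that this is linearized only after projecting onto $\bE_{x,\bh}$, working with the scalar $\langle\bE_{x,\bh},\hat\brho-\brho\rangle$ rather than with $\hat\brho-\brho$ in $\cH$ (the latter being ill-posed, as $\Gamma^{-1}\bX\notin\cH$). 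Using the projected CLT of \cite{cardot2007clt} and \cite{cuesta2019goodness}, the truncation bias $\langle\bE_{x,\bh},(\Gamma_n^{\dag}\Gamma_n-\mathrm{Id})\brho\rangle$ is negligible by (C4)--(C7), $\Gamma_n^{\dag}$ is replaced by $\Gamma^{-1}$, and the last term produces the coupling cross term $\bE_{x,\bh}^{\bX,\bZ}=\E[\langle\bE_{x,\bh},\Gamma^{-1}\bX\rangle\bZ^\top]$. Collecting all pieces and reusing the ALR of $\tilde\bbeta$ yields the uniform representation
\begin{equation*}
T_{n,\bh}(x)=\frac{1}{\sqrt n}\sum_{i=1}^n\Big[\indi-\langle\bE_{x,\bh},\Gamma^{-1}\bX_i\rangle-(\bE_{x,\bh}^{\bZ}+\bE_{x,\bh}^{\bX,\bZ})\bB^{-1}\etab_i\Big]\varepsilon_i+o_p(1),
\end{equation*}
where sign conventions are absorbed into $\bE_{x,\bh}^{\bZ},\bE_{x,\bh}^{\bX,\bZ}$; condition (i), $\lim_n t_{n,\bE_{x,\bh}}<\infty$, guarantees $\E[\langle\bE_{x,\bh},\Gamma^{-1}\bX\rangle^2]=\sum_j\lambda_j^{-1}\langle\bE_{x,\bh},\be_j\rangle^2<\infty$, so the summand is square-integrable and mean zero.

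Granting the ALR, finite-dimensional convergence at any $x_1,\dots,x_m$ follows from the Cram\'er--Wold device and the Lindeberg--Feller CLT for the i.i.d.\ summands, and a moment computation using $\E(\varepsilon^2\mid\bX,\bZ)=\sigma_\varepsilon^2$ (A2) and $\bB=\E(\etab_1\etab_1^\top)$ identifies the limit covariance $K(s,t)=\E[\psi_s\psi_t]$ of the bracketed influence function $\psi_x$; expanding the product of its three pieces at $s$ and at $t$ reproduces precisely the nine terms $C_1,\dots,C_6$. For tightness in $D(\mR)$, each of the three summand processes is indexed by $x$ only through the VC class $\{\indx\}$ and the bounded-variation map $x\mapsto\bE_{x,\bh}$: the oracle term is Donsker (indicators with square-integrable envelope $|\varepsilon|$), the $\hat\brho$-correction is tight because $x\mapsto\langle\bE_{x,\bh},\Gamma^{-1}\bX_i\rangle$ has bounded variation, and the $\tilde\bbeta$-correction equals $c(x)\,\Xi_n$ with $\Xi_n=O_p(1)$ and $c$ of bounded variation.

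The main obstacle is the uniform-in-$x$ linearization of the $\hat\brho$-term. Because the inverse problem is ill-posed, $\|\hat\brho-\brho\|$ need not vanish at the $n^{-1/2}$ rate and Cauchy--Schwarz cannot be applied naively to $\langle\frac{1}{n}\sum_i\indi\bX_i,\sqrt n(\brho-\hat\brho)\rangle$; instead I must show that projecting onto the well-behaved direction $\bE_{x,\bh}$ (condition (i)) restores the parametric rate while simultaneously, and uniformly in $x$, (a) bounding the truncation bias, (b) replacing $\hat\be_j,\hat\lambda_j,\Gamma_n^{\dag}$ by their population counterparts with the second-order remainders shown to be $o_p(1)$ via the fourth-moment bound in condition (ii), and (c) correctly propagating the $\tilde\bbeta$-dependence of $\widetilde D_i$ that generates $\bE_{x,\bh}^{\bX,\bZ}$. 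Controlling these three effects jointly and uniformly in $x$, which is exactly where Assumptions (C1)--(C9) and conditions (i)--(ii) are indispensable, is the crux of the argument.
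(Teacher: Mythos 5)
Your proposal is correct and follows essentially the same route as the paper's own proof: the same decomposition of $T_{n,\bh}$ into the oracle marked process plus the $\tilde\bbeta$- and $\hat\brho$-estimation-error terms, the same closed-form expansion of $\hat\brho-\brho$ and asymptotically linear representations (the Aneiros-P\'erez--Vieu expansion for $\tilde\bbeta$, the projected Cardot/Cuesta-Albertos linearization for $\hat\brho$), the same influence function $\left[\indi-\langle \bE_{x,\bh},\Gamma^{-1}\bX_i\rangle-(\bE_{x,\bh}^{\bZ}+\bE_{x,\bh}^{\bX,\bZ})\bB^{-1}\etab_i\right]\varepsilon_i$ with the identical nine-term covariance expansion, and the same finite-dimensional-convergence-plus-tightness conclusion. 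The only organizational difference is that the paper never needs the uniform-in-$x$ linearization you flag as the crux: it linearizes pointwise (sufficient for the finite-dimensional distributions) and then proves tightness of each decomposition term separately --- Stute's theorem for $T_{n,\bh}^{1}$, Cauchy--Schwarz with condition (ii) for $T_{n,\bh}^{2}$, a moment bound via Theorem 13.5 of Billingsley applied to the non-linearized $T_{n,\bh}^{3}$, and a bounded deterministic function times an $O_{\mP}(1)$ vector for $T_{n,\bh}^{5}$ --- so the uniform remainder control you describe is obtained implicitly rather than attacked head-on.
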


        \begin{remark}
	    Both conditions (i) and (ii) follow from \cite{cuesta2019goodness}.
	    Condition (i) entails that the dominating term of $T_{n,\bh}$ in (\ref{decom_T}) in the Appendix is $T_{n,\bh}^{1}+T_{n,\bh}^{3}+T_{n,\bh}^{5}$ and $\left\Vert\Gamma^{-1/2}x\right\Vert < \infty$, thus ensuring that the covariance function $K(s,t)$ of $\cG$ is well-defined. 
	    Not as restrictive as it seems to be, condition (ii) can be easily achieved in various practical situations, such as when $\rho$ is a linear combination of a finite number of eigenfunctions of  $\Gamma$. Moreover, it is only required to obtain the tightness of $T_{n,\bh}$. 
	    \end{remark}

        %\begin{remark}
         %   Define $t_{n,\bx} = \sqrt {\sum_{j=1}^{k_n} \lambda_j^{-1} \left\langle \bx,\be_j\right\rangle^2}$, then situation (c) refers to (c) of Theorem 3.1 in \cite{cuesta2019goodness}, that is, $\lim_{n} t_{n,\E_{x,\bh}} < \infty$ with $\E_{x,\bh}$ introduced in Appendix. Situation (c) is required so that the 
           % dominant term of $T_{n,\bh}$ in (\ref{decom_T}) is $T_{n,\bh}^{1}+T_{n,\bh}^{3}+T_{n,\bh}^{5}$ and $\left\Vert\Gamma^{-1/2}x\right\Vert < \infty$, guaranteeing the covariance function $K(s,t)$ is well defined. 
       % \end{remark}
	
	\begin{remark}
	    It is worth noting that the Gaussian process $\cG$ defined in Theorem \ref{theorem1} reduces to $\cG_2$ in Theorem 3.3 in \cite{cuesta2019goodness} if one is only interested in testing the linearity of the functional component in the degenerate SFPLR model when $\bbeta=\boldsymbol{0}$ is imposed. Our theoretical findings thus include checking the adequacy of the classical FLM as an important special case. %are thus consistent with those obtained under the classical FLM with FLM as the null hypothesis. %Therefore, the above theorem and corollary can be regarded as an extension of the theory in \cite{cuesta2019goodness}. 
	    \end{remark}

	    \begin{remark}
	    In general, the dependence between the real covariate $\bZ$ and the functional covariate $\bX$ cannot be simply ignored. In particular, note that $T_{n,\bh}^{5}$ in (\ref{decom_T}) represents the estimation uncertainty caused by using $\tilde\bbeta$, the presence of which affects the limiting distribution of the process $T_{n,\bh}$. In a very special case, if we assume the mixed-type covariates are mutually independent and the response and predictors are all centered, then  $T_{n,\bh}^{5}$ in (\ref{decom_T}) would disappear. Together with Lemma \ref{lemma2} and Lemma \ref{lemma8}, we can see that for this special case the estimation of the nuisance parameter $\bbeta$ plays no role asymptotically in testing linearity under \eqref{SFPLR}. However, since the relation between $\bZ$ and $\bX$ is usually unknown and non-independent in most practical cases, our paper does complement the existing literature on testing FLM by using a mixed and thus robust type residual to construct our linearity tests in the SFPLR model.
	\end{remark}

Theorem \ref{theorem1} and the continuous mapping theorem then yield the asymptotic null distributions of the continuous functionals of $T_{n,\bh}$, including the test statistics $\left\Vert T_{n,\bh} \right\Vert_{KS}$ and $\left\Vert T_{n,\bh} \right\Vert_{CvM}$ based on the KS and CvM norms, respectively.
	
	\begin{corollary} \label{Corollary1}
		Under $H_{0,\bh}$, together with Assumptions and conditions in Theorem \ref{theorem1}, if $\Vert T_{n,\bh} \Vert_{KS} := \sup_{x \in \mR} \vert T_{n,\bh}(x) \vert$ and $\Vert T_{n,\bh} \Vert_{CvM} := \int_{\mR} T_{n,\bh}(x)^2 dF_{n,\bh}(x)$, then
		\begin{equation*}
			\left\Vert T_{n,\bh} \right\Vert_{KS} \dto \Vert \cG \Vert_{KS} \quad  \text{and} \quad \Vert T_{n,\bh} \Vert_{CvM} \dto \int_{\mR} \cG(x)^2 dF_{\bh}(x),
		\end{equation*}
		where $\cG$ is the same Gaussian process as defined in Theorem \ref{theorem1}, and $F_{\bh}(\cdot)$ is the distribution function of $\bX^{\bh}$. %and $F_{n,\bh}(\cdot)$ is its empirical distribution function. 
	\end{corollary}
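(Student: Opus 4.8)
The plan is to derive both limits from the weak convergence $T_{n,\bh} \dto \cG$ in $D(\mR)$ established in Theorem \ref{theorem1}, combined with the continuous mapping theorem (CMT). The two cases differ in difficulty because the KS norm is a functional of $T_{n,\bh}$ alone, whereas the CvM norm integrates $T_{n,\bh}^2$ against the \emph{random} measure $F_{n,\bh}$.

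For the KS statistic I would simply invoke the CMT with the supremum functional $\Phi(g) = \sup_{x\in\mR}|g(x)|$. Since the covariance kernel $K(s,t)$ of $\cG$ is continuous, the centered Gaussian limit $\cG$ admits a version with a.s. bounded and continuous sample paths, so $\Phi$ is continuous (in the sup-norm, hence in the Skorokhod topology) on a set of $\cG$-probability one. Because $T_{n,\bh}$ is tied down at the left endpoint ($T_{n,\bh}(x)=0$ for $x<\min_i \bX_i^{\bh}$) and is asymptotically tight by Theorem \ref{theorem1}, the supremum is $O_p(1)$ and the limit $\Vert\cG\Vert_{KS}$ is finite a.s. The CMT then yields $\Vert T_{n,\bh}\Vert_{KS}\dto\Vert\cG\Vert_{KS}$.

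For the CvM statistic I would proceed in three steps. First, I establish the joint convergence $(T_{n,\bh},F_{n,\bh})\dto(\cG,F_{\bh})$: by the Glivenko--Cantelli theorem $\sup_{x}|F_{n,\bh}(x)-F_{\bh}(x)|\pto 0$, so $F_{n,\bh}$ converges in probability to the \emph{deterministic} limit $F_{\bh}$; coupling this with $T_{n,\bh}\dto\cG$ from Theorem \ref{theorem1}, the standard Slutsky-type extension for metric-space-valued random elements delivers joint weak convergence (the second marginal being constant). Second, I verify that the map $\Psi(g,F):=\int_{\mR} g(x)^2\,dF(x)$ is continuous at $(\cG,F_{\bh})$. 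Writing
\begin{equation*}
\int g_n^2\,dF_n - \int g^2\,dF = \int\bigl(g_n^2-g^2\bigr)\,dF_n + \int g^2\,d\bigl(F_n-F\bigr),
\end{equation*}
the first term is bounded by $\sup_x|g_n(x)^2-g(x)^2|\to 0$ whenever $g_n\to g$ uniformly (recall $F_n$ is a probability measure), while the second vanishes because $g^2$ is bounded continuous—using the a.s. bounded continuous paths of $\cG$—and $F_n\to F$ weakly as measures (uniform convergence of the CDFs implying weak convergence of the induced measures). Hence $(\cG,F_{\bh})$ lies in the continuity set of $\Psi$ a.s. Third, the CMT applied to $\Psi$ gives $\Vert T_{n,\bh}\Vert_{CvM}=\Psi(T_{n,\bh},F_{n,\bh})\dto\Psi(\cG,F_{\bh})=\int_{\mR}\cG(x)^2\,dF_{\bh}(x)$.

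The step I expect to be the crux is the continuity of $\Psi$ at $(\cG,F_{\bh})$ in the product topology (sup-norm on $D(\mR)$ paired with weak convergence of measures), together with the verification that this point falls in the continuity set a.s. Both hinge on $\cG$ having a.s. bounded continuous paths, which I would justify from the convergence in $D(\mR)$ in Theorem \ref{theorem1} (yielding tightness and hence a.s. boundedness) combined with Gaussianity and the continuity of $K(s,t)$. Once $\cG$ is bounded, no delicate tail control is needed: since $F_{n,\bh}$ and $F_{\bh}$ are both probability measures of unit mass and the integrand $T_{n,\bh}^2$ is uniformly integrable against them, the decomposition above closes cleanly and the conclusion follows.
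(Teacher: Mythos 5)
Your proposal is correct and takes essentially the same route as the paper: the KS part is the continuous mapping theorem applied to Theorem \ref{theorem1}, and the CvM part rests on the identical two-term decomposition $\int_{\mR}\{T_{n,\bh}(x)^2-\cG(x)^2\}\,dF_{n,\bh}(x)+\int_{\mR}\cG(x)^2\,\{dF_{n,\bh}(x)-dF_{\bh}(x)\}$, with the first term controlled by uniform convergence and the second by Glivenko--Cantelli together with a Helly--Bray (portmanteau) argument exploiting the a.s.\ bounded continuous paths of $\cG$. The only real difference is bookkeeping: the paper asserts ``$\sup_{x\in\mR}|T_{n,\bh}(x)-\cG(x)|\to 0$ a.s.,'' which tacitly presupposes a Skorokhod almost-sure representation of the weak convergence, whereas you reach the same estimates via joint weak convergence $(T_{n,\bh},F_{n,\bh})\dto(\cG,F_{\bh})$ and continuity of the map $(g,F)\mapsto\int g^2\,dF$ followed by the extended continuous mapping theorem, which is a slightly more careful formalization of the same analytic content.
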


We reject the null hypothesis whenever $\left\Vert T_{n,\bh} \right\Vert_{KS}$ and $\left\Vert T_{n,\bh} \right\Vert_{CvM}$ exceed some overly ``large'' values. However, the asymptotic null distributions $\Vert \cG \Vert_{KS}$ and $\int_{\mR} \cG(x)^2 dF_{\bh}(x)$ in Corollary \ref{Corollary1} depend on the underlying data-generating process in a highly complicated manner, making a direct application of them infeasible. To implement our KS and CvM tests in practice, in Section \ref{practical_aspects}, we suggest an easy-to-implement wild bootstrap procedure to approximate the critical values of $\Vert \cG \Vert_{KS}$ and $\int_{\mR} \cG(x)^2 dF_{\bh}(x)$.	
	
\subsection{Asymptotic power}	
	Now, we investigate the asymptotic power properties of the KS and CvM tests. We first consider the fixed alternative of the following form:
\begin{equation}
 H_1:~ \E\left[\bY - \bX^{\brho} - \bZ^{\top}\bbeta \vert \bX  \right] \neq 0 ~ a.s.,  \quad \text{for all}~ \brho \in \cH,   	\end{equation}
 with its corresponding projected version
\begin{equation}
\label{H1_h}
 H_1^{\bh}:~ \E\left[\bY - \bX^{\brho} - \bZ^{\top}\bbeta \vert \bX^{\bh}  \right] \neq 0 ~ a.s.,  \quad \text{for all}~ \brho \in \cH.  
 \end{equation} 
Note that $H_1$ and $H_1^{\bh}$ are simply the negations of $H_0$ and $H_0^{\bh}$, respectively. The following theorem analyzes the asymptotic property of $T_{n,\bh}$ under $H_1^{\bh}$.  
	
	\begin{theorem} 
		\label{theorem2}
		Under $H_1^{\bh}$, together with Assumptions and conditions in Theorem \ref{theorem1}, it follows that $n^{-1/2}T_{n,\bh}(x) \pto \E \left[\left(m(\bX) - \bX^{\brho^{*}} \right) \indx\right]$ uniformly in $x\in\mathbb R$ and for some $\brho^{*}$ satisfying $\E \left[\left\Vert\hat{\brho} - \brho^{*}\right\Vert\right]= o(1)$.
	\end{theorem}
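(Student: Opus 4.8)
The plan is to exploit the fact that the SFPLR specification \eqref{SFPLR} is maintained under $H_1^{\bh}$ (only the linearity of $m$ is dropped), so that $Y_i = \bZ_i^{\top}\bbeta + m(\bX_i) + \varepsilon_i$ with $\E[\varepsilon\mid\bX,\bZ]=0$ still holds by Assumption (A2). Substituting this into \eqref{rmep} and scaling by $n^{-1/2}$, I would decompose
\begin{align*}
n^{-1/2} T_{n,\bh}(x) &= \frac{1}{n}\sum_{i=1}^{n} \indi \left(m(\bX_i) - \bX_i^{\hat{\brho}}\right) + \frac{1}{n}\sum_{i=1}^{n} \indi \bZ_i^{\top}(\bbeta - \tilde{\bbeta}) \\
&\quad + \frac{1}{n}\sum_{i=1}^{n} \indi \varepsilon_i =: A_n(x) + B_n(x) + C_n(x).
\end{align*}
The goal is to show that $B_n$ and $C_n$ vanish uniformly in probability, while $A_n$ converges uniformly to the stated deterministic limit.

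For $B_n$, I would invoke the root-$n$ consistency of the robust estimator $\tilde{\bbeta}$ guaranteed by Assumptions (B1)--(B4), so that $\Vert\bbeta-\tilde{\bbeta}\Vert = O_p(n^{-1/2})$; since $n^{-1}\sum_i \indi \bZ_i^{\top}$ is dominated by $n^{-1}\sum_i \Vert\bZ_i\Vert = O_p(1)$ under $\E\Vert\bZ\Vert^2<\infty$ from (C3), the product is $o_p(1)$ uniformly in $x$. For $C_n$, Assumption (A2) gives $\E[\indx\varepsilon]=\E[\indx\,\E(\varepsilon\mid\bX,\bZ)]=0$ for every $x$, so a uniform law of large numbers over the half-line indicator class forces $\sup_x|C_n(x)|=o_p(1)$.

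For the leading term $A_n$, I would first identify the population target $\brho^{*}$ as the minimizer of $\E\lvert m(\bX)-\bX^{\brho}\rvert^2$, equivalently the regularized solution of the moment equation $\Delta^{*}=\Gamma\brho^{*}$ with $\Delta^{*}=\E[\bX\otimes m(\bX)]$ (the limit of $\Delta_n$, using $\tilde{\bbeta}\to\bbeta$ and $\E[\bX\otimes\varepsilon]=0$). The relation $\E[\Vert\hat{\brho}-\brho^{*}\Vert]=o(1)$ then follows from the alternative-form analogue of condition (ii), since $\E[\Vert\hat{\brho}-\brho^{*}\Vert^4]=\mathcal O(n^{-2})$ yields $\E[\Vert\hat{\brho}-\brho^{*}\Vert]=\mathcal O(n^{-1/2})$ by Jensen's inequality. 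By Cauchy--Schwarz, $\lvert\bX_i^{\hat{\brho}}-\bX_i^{\brho^{*}}\rvert\le\Vert\bX_i\Vert\,\Vert\hat{\brho}-\brho^{*}\Vert$, so replacing $\hat{\brho}$ by $\brho^{*}$ in $A_n$ incurs an error bounded by $\Vert\hat{\brho}-\brho^{*}\Vert\cdot n^{-1}\sum_i\Vert\bX_i\Vert$, which is $o_p(1)$ uniformly in $x$ because $n^{-1}\sum_i\Vert\bX_i\Vert\pto\E\Vert\bX\Vert<\infty$ by (A1). The remaining average $n^{-1}\sum_i\indi\left(m(\bX_i)-\bX_i^{\brho^{*}}\right)$ then converges to $\E[(m(\bX)-\bX^{\brho^{*}})\indx]$ by a uniform law of large numbers, giving the claim.

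The substantive obstacle is not any single calculation but securing the two pieces that are specific to this model: first, confirming that the regularized estimator of \citet{cardot2007clt} retains consistency toward the population projection $\brho^{*}$ \emph{under the alternative}, where no genuine $\brho$ exists, so that the $L^4$-rate underlying condition (ii) can be re-derived with $\Delta^{*}$ in place of $\Delta$; and second, making every convergence uniform in $x$. The latter I would handle by noting that $\{\indx:x\in\mR\}$ is a VC (hence Glivenko--Cantelli) class and that the marks $\Vert\bZ\Vert$, $\lvert\varepsilon\rvert$, and $\lvert m(\bX)-\bX^{\brho^{*}}\rvert$ admit integrable envelopes under (A1)--(A2), (B1)--(B2) and (C3) (indeed $m(\bX)$ is bounded since $\bX\in\cC$ compact by (B1) with $m$ H\"older on $\cC$), so each product of an indicator with an integrable mark is Glivenko--Cantelli. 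Crucially, no tightness argument is needed here, since we only assert convergence in probability of the rescaled process to a deterministic limit, so the whole proof reduces to a uniform LLN rather than a full weak-convergence statement.
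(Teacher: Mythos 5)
Your proposal is correct and, after regrouping, rests on the same decomposition as the paper's proof: write $Y_i=\bZ_i^{\top}\bbeta+m(\bX_i)+\varepsilon_i$ (the SFPLR specification is maintained under the alternative), center the functional part at the pseudo-true value $\brho^{*}$, and isolate the two estimation-error terms coming from $\tilde\bbeta$ and $\hat\brho$. The difference is in the technical execution. The paper treats its first and third terms as Donsker classes (Corollary 2.10.13 of van der Vaart and Wellner) to get uniformity, and handles the term $n^{-1}\sum_{i}\langle\bX_i,\brho^{*}-\hat{\brho}\rangle\indi$ by a pointwise SLLN upgraded to uniform convergence via the Cram\'er--Wold device and a tightness argument carried over from the null-distribution theorem. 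You instead dispose of both estimation-error terms with direct Cauchy--Schwarz envelope bounds, e.g.\ $\sup_{x}\vert B_n(x)\vert\le\bigl(n^{-1}\sum_{i}\Vert\bZ_i\Vert\bigr)\Vert\bbeta-\tilde{\bbeta}\Vert=o_{\mP}(1)$ and the analogous bound for the $\hat\brho$-replacement error, and reduce everything else to a Glivenko--Cantelli statement over the VC class of half-line indicators with integrable envelopes. Your explicit observation that no tightness or weak-convergence machinery is needed, because the rescaled process converges in probability to a deterministic limit, is a genuine simplification; the paper's appeal to Cram\'er--Wold and tightness is heavier than necessary for an in-probability statement. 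You also make explicit what the paper relegates to a remark: $\brho^{*}$ is the regularized solution of the population equation $\Gamma\brho^{*}=\E\left[\bX\otimes m(\bX)\right]$, whose existence follows from (C1)--(C2) applied to $Y$ and the $Z_j$'s, and you correctly flag the one delicate point, namely that the $L^4$ rate of condition (ii) must be re-derived around $\brho^{*}$ under the alternative to justify $\E\left[\Vert\hat{\brho}-\brho^{*}\Vert\right]=o(1)$. The paper's proof passes over this point silently, so on the only subtle issue your treatment is, if anything, the more careful of the two.
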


Note that $\brho^{*}$ in Theorem \ref{theorem2} should be understood as the probability limit of the two-step estimator $\hat{\brho}$ in $\cH$ under $H_1^{\bh}$, i.e., a pseudo true value. As an immediate consequence of Theorem \ref{theorem2}, under $H_1^{\bh}$ such that there exists some $x$ with a positive measure such that $\E \left[\left(m(\bX) - \bX^{\brho^{*}} \right) \indx\right]\neq 0$, the KS statistic $\left\Vert T_{n,\bh} \right\Vert_{KS}$ diverges to positive infinity at the rate of $n^{1/2}$, and the CvM statistic $\left\Vert T_{n,\bh} \right\Vert_{CvM}$ diverges to positive infinity at the rate of $n$. This then indicates that our proposed tests are consistent against the fixed projected alternative $H_1^{\bh}$ and thus $H_1$. %with probability approaching one. 
Perhaps more interestingly, because of the robust type estimator $\tilde\bbeta$ we used in the mixed residual $\hat U_i$, the consistency of our tests based on $T_{n,\bh}$ always holds regardless of any (unknown) dependence between the real covariates $\bZ$ and the functional covariate $\bX$.

Next, we study the power performance of our tests under a sequence of local alternative hypotheses converging to the null at the parametric rate $n^{-1/2}$ given by:
\begin{equation}
 H_{1n}: ~\E\left[\bY - \bX^{\brho} - \bZ^{\top}\bbeta \vert \bX\right] = n^{-1/2}r\left(\bX\right),\quad  \text{for some}~ \brho \in \cH, \label{H1n}   
\end{equation}
 with its corresponding projected version
\begin{equation}
 H_{1n}^{\bh}: ~\E\left[\bY - \bX^{\brho} - \bZ^{\top}\bbeta-n^{-1/2}r\left(\bX\right) \vert \bX^{\bh}\right] = 0,\quad  \text{for some}~ \brho \in \cH, \label{H1n}   
\end{equation} 
where $r(\cdot)$ is a non-zero function satisfying $\E \left\vert r(\bX)\right\vert < \infty$. Note that the function $r(\cdot)$ represents directions of departure from $H_{0}^{\bh}$, and $n^{-1/2}$ specifies the rate of convergence of $ H_{1n}^{\bh}$ to $H_{0}^{\bh}$, the fastest possible rate known in goodness-of-fit testing problems. The following theorem states the asymptotic distribution of $T_{n,\bh}$ under the sequence of local alternatives $H_{1n}^{\bh}$.
	
	\begin{theorem} 
		\label{theorem3}
		Under $H_{1n}^{\bh}$, together with Assumptions and conditions in Theorem \ref{theorem1}, it follows that $T_{n,\bh} \dto \cG + \Delta$, where $\cG$ is the same Gaussian process as defined in Theorem \ref{theorem1}, %except that $\brho$ in $\mathcal G$ is replaced by $\brho_0$, 
  and $\Delta$ is a deterministic shift function given by $\Delta(x) = \E \left[r\left(\bX\right) \indx\right]$ .
	\end{theorem}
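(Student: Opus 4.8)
The plan is to exploit the fact that under $H_{1n}^{\bh}$ the true regression function is a vanishing perturbation of a linear one, so that $T_{n,\bh}$ splits into the null-type residual process analysed in Theorem \ref{theorem1} plus an explicit drift. Under $H_{1n}^{\bh}$ one has $m(\bX)-\bX^{\brho}=n^{-1/2}r(\bX)$ $a.s.$; writing $\varepsilon_i=Y_i-\bZ_i^{\top}\bbeta-\bX_i^{\brho}-n^{-1/2}r(\bX_i)$ (which satisfies Assumption (A2)) and substituting $Y_i$ into \eqref{rmep} gives
$$T_{n,\bh}(x)=\frac{1}{\sqrt n}\sum_{i=1}^n\indi\Big[\varepsilon_i+\bZ_i^{\top}(\bbeta-\tilde\bbeta)+\bX_i^{\brho-\hat\brho}\Big]+\frac{1}{n}\sum_{i=1}^n\indi\, r(\bX_i).$$
First I would dispatch the second (drift) term: the class $\{\indx:x\in\mR\}$ is a VC class and $\E|r(\bX)|<\infty$, so a uniform law of large numbers yields $\frac1n\sum_{i=1}^n\indi\,r(\bX_i)\to\E[\,r(\bX)\indx\,]=\Delta(x)$ uniformly in $x$, which is exactly the announced deterministic shift.

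The bulk of the work is to show that the first bracketed term still converges weakly to $\cG$, i.e.\ that replacing the null data-generating process by the drifting one does not alter the limiting law of the estimation-driven process. Here I would reuse the decomposition \eqref{decom_T} of Theorem \ref{theorem1} into $T_{n,\bh}^{1}+\cdots+T_{n,\bh}^{5}$ (plus negligible remainders) and re-examine each piece as a triangular array. The leading term $T_{n,\bh}^{1}$ built from the errors $\varepsilon_i$ is unchanged, and since (A2) is imposed on $\varepsilon$ directly, its finite-dimensional CLT and the covariance kernel $K(s,t)$ are identical to the null case. For the nuisance-estimation terms I would use that the perturbation $n^{-1/2}r(\bX)$ is a function of $\bX$ alone: in the partialling-out step \eqref{beta_estimate} it is absorbed by the Nadaraya--Watson smoother of $\E(Y\mid\bX)$, so $\tilde\bbeta$ admits the same $n^{1/2}$-expansion as under $H_0^{\bh}$ up to an $o_p(n^{-1/2})$ term and $T_{n,\bh}^{5}$ is unaffected; the same perturbation feeds into $\hat\brho$ in \eqref{rhohat} only through the cross-covariance $\Delta_n$, adding a term of order $n^{-1/2}\Gamma^{-1}\E[r(\bX)\bX]$ that must be propagated through $T_{n,\bh}^{3}$.

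I would then re-establish tightness of the stochastic part exactly as in Theorem \ref{theorem1}, noting that condition (ii) and the fourth-moment bound on $\hat\brho-\brho$ persist because the extra drift is only $O_p(n^{-1/2})$ and does not inflate them. Combining the weak convergence of the stochastic part to $\cG$ with the uniform convergence of the deterministic part to $\Delta$, and invoking that adding a fixed continuous function to a weakly convergent sequence preserves weak convergence, yields $T_{n,\bh}\dto\cG+\Delta$ in $D(\mR)$.

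The hard part will be the bookkeeping of the nuisance-estimation contributions under the drifting model: one must verify that the $\tilde\bbeta$ effect is genuinely $o_p(1)$ at the process level and that the term induced in $\hat\brho$ by $n^{-1/2}r(\bX)$ does not corrupt the drift, so that the net shift is exactly $\Delta(x)=\E[r(\bX)\indx]$ rather than a projected version; concretely this amounts to controlling $\langle\E[\indx\bX],\Gamma^{-1}\E[r(\bX)\bX]\rangle$ and showing it is negligible or absorbed, which is where the robustness of the two-step construction and the regularity conditions (C1)--(C9) enter most delicately (and may implicitly require the local direction $r$ to be effectively orthogonal to the functional-linear directions). By contrast, the weak-convergence and tightness machinery itself carries over from Theorem \ref{theorem1} essentially unchanged.
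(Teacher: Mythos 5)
Your proposal is correct and follows essentially the same route as the paper: rewrite the process in terms of the genuine errors $\varepsilon_i^{1n}=Y_i-\bZ_i^{\top}\bbeta-\bX_i^{\brho}-n^{-1/2}r(\bX_i)$, which are i.i.d.\ mean zero and satisfy (A2), so that $T_{n,\bh}$ splits into the null-type residual process of Theorem \ref{theorem1} (converging to $\cG$) plus the drift $n^{-1}\sum_{i=1}^{n}\indi\, r(\bX_i)$, which converges uniformly to $\Delta(x)=\E\left[r(\bX)\indx\right]$. One remark worth recording: the issue you single out as the ``hard part'' --- that the perturbation $n^{-1/2}r(\bX)$ feeds into $\hat\brho$ through the cross-covariance $\Delta_n$ and could contribute an extra deterministic term $\bigl\langle \E\left[\indx\bX\right],\Gamma^{-1}\E\left[r(\bX)\bX\right]\bigr\rangle$ via $T_{n,\bh}^{3}$ --- is genuine, and the paper's own proof does not address it at all: it merely states that the terms $\bar{A}_x^i,\bar{B}_x^i,\bar{C}_x^i$ ``resemble'' their null counterparts and that the proof ``is completed following similar arguments.'' As you suggest, the stated shift is exact only after the linear component of $r$ is absorbed into $\brho$ (which is without loss of generality, by reparametrizing $\brho\mapsto\brho+n^{-1/2}\Gamma^{-1}\E\left[r(\bX)\bX\right]$ and replacing $r$ by its part orthogonal to linear functionals); your account of this point is more careful than the one the paper provides.
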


Theorem \ref{theorem3} and the continuous mapping theorem yield that, under $H_{1n}^{\bh}$,  
\begin{equation*}
			\left\Vert T_{n,\bh} \right\Vert_{KS} \dto \Vert \cG+\Delta \Vert_{KS} \quad  \text{and} \quad \Vert T_{n,\bh} \Vert_{CvM} \dto \int_{\mR} \left(\cG(x)+\Delta(x)\right)^2 dF_{\bh}(x).
		\end{equation*}
 Consequently, our KS and CvM tests have non-negligible asymptotic powers against the sequence of local alternatives $H_{1n}^{\bh}$ because the deterministic function $\Delta(x)\neq 0$ for at least some $x$ with a positive measure. 
	
	%\begin{remark}
	    %The $\brho_0$ in Theorem \ref{theorem3} is specified as the true value of coefficients of the linear functional terms, while $\brho^{*}$ in Theorem \ref{theorem2} is a pseudo true value consistent with the two-step estimator $\hat{\brho}$ in $\cH$.
	%\end{remark}

	\section{Practical Aspects}\label{practical_aspects}
	\subsection{Parameter  estimation}
	Estimating the unknown linear coefficients $\bbeta$ mainly involves the choice of the semi-metric $d(\cdot,\cdot)$, the  kernel function $K(\cdot)$, and the bandwidth $b$. 
	One has to select among different kinds of semi-metrics which can be drawn by the shape of  trajectories of the functional
	covariate $\bX$. Several approaches, such as
	Functional PCA semi-metric, partial least-squares (PLS) semi-metric, or derivatives semi-metric are recommended for selecting the semi-metric in  \citet{ferraty2006nonparametric}. 
	Throughout our paper, we use the following widely adapted semi-metric:
	\begin{equation*}
		d\left(\mathcal{X}_i,\mathcal{X}_j\right)=\left\{\int\left(\mathcal{X}_i(t)-\mathcal{X}_j(t)\right)^2dt\right\}^{1/2},  \quad 
		\mathcal{X}_i,\mathcal{X}_j \in \mathcal{H}.
	\end{equation*}
	
	To derive $\tilde \bbeta$ in (\ref{beta_estimate}),  the quartic kernel $K( u) =15( 1-u^{2})
	^{2}\mathbb{I}_{\left\{ \left\vert u\right\vert \leq 1\right\}} /16$ is chosen, and the bandwidth has the form $b = cn^{-1/5}$, satisfying Assumption (B4), where $c$ is an adjustment parameter. %tuning constant.  
 We have found in extensive simulations that our tests are relatively insensitive to the choice of $c$. In particular, for the simulations considered, $c=3$ works quite well and is what we recommended, see Section \ref{band}.
	
	On the other hand, the estimation of the functional coefficient $\brho$ depends on  the truncated number $k_n$. 
	However, it is hard to implement (\ref{kn}) directly  since $\{\lambda_j\}$ is usually unknown in practice.
	As \citet{cuesta2019goodness} suggested, a data-driven way by selecting from a set of candidate $k_n$'s and choosing the optimal one in terms of a model-selection criterion,
	such as the Schwartz Information Criterion (SIC), is preferred. To be more specific, denote
	\begin{equation}\label{kn}\mathrm{SIC}\left(k_n\right)=\ell\left(\hat\brho_{k_n}\right)+\frac{\log (n) k_n}{n-k_n-2},
	\end{equation}
	where $\ell(\hat\brho_{k_n})$ represents the log-likelihood of the FLM for $\brho$	estimated with $k_n$ FPC's. The second term 
	$\log (n) k_n/(n-k_n-2)$ over-penalizes large $k_n$, which leads to noisy estimates of  $\brho$.
	Then one can obtain the regularized estimator $\hat\brho$ via (\ref{rhohat}).

	\subsection{Selection of projection directions}
        Theoretically speaking, the selection law of projection $\bh$ can be arbitrary as long as it has a non-degenerate distribution in $\cH$; that is, the range of the distribution of the random projection has a positive $\mu$ measure. And one only needs to randomly choose one direction. However, in practice, we pay special attention to the selection of projection direction $\bh$ in order to guarantee the robustness of the finite-sample performance.
 
	The projection direction $\bh$ plays an important role in testing $H_0$ since it may influence the power of the test significantly.
	For a special direction  that  is orthogonal to the data, namely $\bX^\bh=0$, it will 
	fail to calibrate the level of the test.
	Since under this projection direction, $T_{n,\bh} (x)= n^{-1/2} \sum_{i=1}^{n} \mathds{1}_{\left\{0 \le x\right\}}\hat U_i$ and $\left\Vert T_{n,\bh}\right\Vert_N=0$, then the  $p$-value of $H_0^{\bh}$  is always $1$, which obviously makes no sense.
	Moreover, different projection directions may yield  various outcomes of the test. 
	Both above issues can be tackled by a data-driven approach, which  avoids sampling  orthogonal directions and allows drawing several 
	directions $\bh_1,\ldots, \bh_K$. After obtaining  a number $K$ of different $p$-values,  the final $p$-value is determined by merging the resulting $p$-values with the False Discovery Rate (FDR) method in  \citet{benjamini2001control}. 
	The detailed procedure is shown in Table \ref{Algorithml}.

	\begin{table}[h!]
		\centering
		\caption{A data-driven approach of  projection direction selection.}
		\label{Algorithml}
		\begin{tabular}{l}
			\hline
			\textbf{Algorithm 1}: Construction of projection directions\\
			\hline
			\textbf{Input}: Functional covariates $\left\{\bX_{i}\right\}_{i=1}^n$\\
			\qquad Step 1: Compute the FPC of $\bX_1,\ldots,\bX_n$, namely the eigenpairs $\left\{\left(\hat\lambda_j,\hat{\be}_j\right)\right\}_{j=1}^n$.\\ 
			\qquad Step 2: Choose $j_n :=\min\left\{k=1,\ldots,n-1: \left(\sum_{j=1}^k 
			\hat\lambda_j^2\right)/\left(\sum_{j=1}^{n-1} \hat\lambda_j^2\right)\le 0.95\right\}$. \\
			\qquad Step 3: Generate the data-driven projection direction $\bh=\sum_{j=1}^{j_n}\eta_j\hat\be_j$, with $\eta_j\sim N(0,s_j^2)$,\\
			\quad\quad \quad\qquad with $s_j^2$ the sample variance of the scores in the $j$-th FPC. \\
			\qquad Step 4: Repeat Step 3 for $K$ times and then get projection directions $\bh_1,\ldots, \bh_K$.\\
			\textbf{Output}:  Projection directions $\bh_1,\ldots, \bh_K$.\\
			\hline
		\end{tabular}
		%\caption{Data-driven approach of  projection direction selection}
		%\label{Algorithml}
	\end{table}

	\subsection{Critical values}
	\label{boot}
	%We compute critical values via the wild bootstrap method to approximate the distribution of the statistics under the null   hypothesis. 
 As discussed before, the asymptotic null distributions of our test statistics depend on the underlying data-generating process and the corresponding critical values cannot be tabulated. To implement our tests, we use the wild bootstrap method to estimate the critical values in this section. 
	%which enjoys good theoretical and empirical properties is easy to implement and is free of computing new parameter estimates at each replication.
	The wild bootstrap procedure is widely adopted in specification testing literature.  It particularly works for situations with an unknown form of heteroskedasticity, which is commonly observed in the functional data setting. % is consistent in the finite-dimensional case and efficient  for situations with potential heteroskedasticity, quite popular in functional data. 

%$\hat{U}_i = Y_i - \bZ_i^{\top} \tilde{\bbeta}-\bX_{i}^{\hat{\brho}}$

	To be more precise, the wild bootstrap residual  $U_i^{\ast}$ for $1\le i \le n$ is defined as $U_i^{\ast}=V_i\hat U_i$, 
	where $\{V_i\}_{i=1}^n$ is a sequence of i.i.d. random variables with mean zero and variance one and also independent of the original sample $\{Y_i,\bZ_i,\bX_i\}_{i=1}^n$. One popular choice is $\mathbb P(V_i=1-\kappa)=\kappa/\sqrt 5$ and 	  $\mathbb P(V_i=\kappa)=1-\kappa/\sqrt 5$ with $\kappa=(\sqrt 5 +1)/2$, as suggested by \cite{hardle1993comparing}.
 Then, the asymptotic null behavior of $T_{n,\bh}(x)$ can be consistently approximated by the following bootstrap process: %test statistic 
	\begin{align*}
		T_{n,\bh}^{\ast} (x)= n^{-1/2}\sum_{i=1}^n \left(Y_i^{\ast}-\bZ_i^{\top}\tilde {\bbeta}^{\ast}-\bX_{i}^{\hat\brho^{\ast}}
		\right) \indi,
	\end{align*}
	where  $Y_i^{\ast} =\bZ_i^{\top}\tilde{\bbeta}+\bX_{i}^{\hat{\brho}}+U_i^{\ast}$ for $1\le i \le n$ is the bootstrap dependent variable, and $\tilde\bbeta^{\ast}$ and $\hat {\brho}^{\ast}$ are the estimators from the bootstrap sample $\{Y_i^{\ast},\bZ_i,\bX_i\}_{i=1}^n$.
   After repeating  the above scheme for $B$ times, the $p$-value is computed by $1/B\sum_{b=1}^B \mathds{1}_{\left\{\left\Vert T_{n,\bh}\right\Vert_N \le \left\Vert T_{n,\bh}^{\ast b}\right\Vert _N   \right\}}$, with $N$ being either KS or CvM. The next theorem ensures the asymptotic validity of the wild  bootstrap procedure.
   
	\begin{theorem} \label{theorem4}
		Under $H_0^{\bh}$, together with Assumptions and conditions in Theorem \ref{theorem1}, for the wild bootstrap process $T_{n,\bh}^{*}$,  there exists $T_{n,\bh}^{*} \dto \cG^{*}$ in $D(\mR)$, where $\cG^{*}$ and $\cG$ have the same distribution.
	\end{theorem}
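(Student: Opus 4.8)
The plan is to argue conditionally on the original sample $\{Y_i,\bZ_i,\bX_i\}_{i=1}^n$, treating the residuals $\hat U_i$ as fixed and regarding the i.i.d.\ multipliers $\{V_i\}$ as the sole source of randomness, and to show that $T_{n,\bh}^{*}$ admits a decomposition mirroring the expansion (\ref{decom_T}) of $T_{n,\bh}$ established in the proof of Theorem \ref{theorem1}. Substituting $Y_i^{*}=\bZ_i^{\top}\tilde\bbeta+\bX_i^{\hat\brho}+U_i^{*}$ with $U_i^{*}=V_i\hat U_i$ into the definition of $T_{n,\bh}^{*}$ gives
\begin{equation*}
T_{n,\bh}^{*}(x)=n^{-1/2}\sum_{i=1}^n U_i^{*}\,\indi-n^{-1/2}\sum_{i=1}^n \bZ_i^{\top}(\tilde\bbeta^{*}-\tilde\bbeta)\,\indi-n^{-1/2}\sum_{i=1}^n \bX_i^{\hat\brho^{*}-\hat\brho}\,\indi,
\end{equation*}
so the leading multiplier term is $n^{-1/2}\sum_i V_i\hat U_i\,\indi$ and the two remaining terms carry the bootstrap estimation uncertainty of $\tilde\bbeta^{*}$ and $\hat\brho^{*}$. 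First I would establish finite-dimensional convergence: conditionally on the data the leading term is a sum of independent mean-zero summands whose conditional covariance at $(s,t)$ equals $n^{-1}\sum_i \hat U_i^2\,\mathds{1}_{\{\bX_i^{\bh}\le s\wedge t\}}$, and a uniform law of large numbers together with the consistency of $\hat U_i$ for the true error (using (A2) under $H_0^{\bh}$) identifies its limit as $C_1(s,t)$. A Lindeberg-type conditional central limit theorem, whose negligibility condition follows from the moment bounds in (B2) and from $\E(\Vert\bX\Vert^4)<\infty$ implied by (C8), then yields joint asymptotic normality of the bootstrap process at finitely many points.

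The crux of the argument, and the step I expect to be the main obstacle, is to reproduce for the bootstrap estimators the asymptotic linear representations that generate the $C_2,\ldots,C_6$ pieces of the covariance. Concretely, I would derive Bahadur-type expansions showing that $\sqrt n(\tilde\bbeta^{*}-\tilde\bbeta)$ and $\hat\brho^{*}-\hat\brho$ are, up to conditionally negligible remainders, the same linear functionals of the bootstrap scores $U_i^{*}=V_i\hat U_i$ as $\sqrt n(\tilde\bbeta-\bbeta)$ and $\hat\brho-\brho$ are of the original errors in (\ref{decom_T}); here the bootstrap analogues of Lemmas \ref{lemma2} and \ref{lemma8} provide the expansion for $\tilde\bbeta^{*}$. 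The functional coefficient is the delicate part: because $\hat\brho^{*}$ is built from the regularized finite-rank inverse with truncation $k_n$ and the ill-posed operator $\Gamma_n^{\dag}$, I would need to verify that the bootstrap FPCA eigenstructure stays close to that of $\Gamma_n$ and that the bootstrap counterpart of condition (ii), namely $\E^{*}[\Vert\hat\brho^{*}-\hat\brho\Vert^4]=\bO_p(n^{-2})$ (with $\E^{*}$ denoting expectation conditional on the sample), continues to hold. This is what lets the weights $\langle\bE_{t,\bh},\Gamma^{-1}\bx\rangle$ and $(\bE_{t,\bh}^{\bZ}+\bE_{t,\bh}^{\bX,\bZ})\bB^{-1}\etab_1$ appear exactly as in Theorem \ref{theorem1}, so that the conditional covariance of $T_{n,\bh}^{*}$ reconstructs $K(s,t)$ through the same cross terms and quadratic terms.

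Finally I would establish stochastic equicontinuity of $T_{n,\bh}^{*}$ conditional on the data, paralleling the tightness argument for $T_{n,\bh}$: the marks are monotone indicators indexed by the real-valued $\bX_i^{\bh}$, so the chaining bounds used for $T_{n,\bh}$ transfer after replacing $\varepsilon_i$ by $V_i\hat U_i$, with the moment control from (B2) and (C8) again supplying the required increment bounds. Combining finite-dimensional convergence with tightness yields, in the usual ``in probability, conditional on the sample'' sense, $T_{n,\bh}^{*}\dto\cG^{*}$ in $D(\mR)$, where $\cG^{*}$ is a centered Gaussian process whose covariance is exactly the $K(s,t)$ of Theorem \ref{theorem1}. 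Since a centered Gaussian law is determined by its covariance, $\cG^{*}$ and $\cG$ have the same distribution, which is the assertion of the theorem.
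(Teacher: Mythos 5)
Your proposal is correct and follows essentially the same route as the paper: both substitute the bootstrap data-generating equation $Y_i^{*}=\bZ_i^{\top}\tilde\bbeta+\bX_i^{\hat\brho}+U_i^{*}$ so that $(\tilde\bbeta,\hat\brho)$ play the role of the true parameters in the bootstrap world, expand $T_{n,\bh}^{*}$ into the multiplier term minus the estimation-effect terms of $\tilde\bbeta^{*}$ and $\hat\brho^{*}$, show that these estimators admit the same linear representations in $U_i^{*}=V_i\hat U_i$ that $\tilde\bbeta$ and $\hat\brho$ have in $\varepsilon_i$ (the paper writes the bootstrap analogues of (\ref{rho}) and (\ref{decom_beta}) explicitly), and then rerun the finite-dimensional-convergence-plus-tightness argument of Theorem \ref{theorem1}. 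The only real difference is framing: you argue conditionally on the sample (conditional Lindeberg CLT, bootstrap counterpart of condition (ii), conditional equicontinuity), whereas the paper argues unconditionally by noting that the $U_i^{*}$'s are conditionally independent with the same first two moments as the $\varepsilon_i$'s and asserting the earlier proof carries over — your conditional formulation spells out precisely the verifications the paper leaves implicit (and your worry about the bootstrap FPCA eigenstructure is actually vacuous here, since the $\bX_i$'s are not resampled and $\Gamma_n^{\dagger}$ is unchanged in the bootstrap world).
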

	
%	\begin{theorem} \label{theorem5}
%		Under Assumptions (A1) to (A6) and  (B1) to (B8), for the wild bootstrap process $T_{n,\bh}^{**}$, we have $T_{n,\bh}^{**} \dto \cG^{**}$ in $D(\mR)$, where $\cG^{**}$ and $\cG$ have the same distribution.
%	\end{theorem}

To clarify the whole testing procedure, we give the detailed computational algorithm  in the following table.%Table \ref{Algorithm2}.
	\begin{table}[h!]
		\centering
		%\caption{The detailed algorithm for testing $H_0$.}
		%\label{Algorithm2}
\begin{tabular}{l}
			\hline
			\textbf{Algorithm 2}: Testing procedure for $H_0$ \\
			\hline
			\textbf{Input}: Data $\left\{Y_i, \bZ_i, \bX_{i}\right\}_{i=1}^n$\\
			\qquad Step 1. Estimate $\bbeta$ via (\ref{beta_estimate})   with the quadratic kernel and the bandwidth $h=cn^{-1/5}$.\\
			%and \\\qquad \qquad\quad denote  $\hat{D}_i=Y_i -  \bZ_i^{\top}\tilde{\bbeta}$. \\  
			\qquad Step 2. Estimate $\brho$ based on (\ref{rhohat})  for a  $k_n$ chosen by (\ref{kn}) and obtain $\hat U_i=Y_i -  \bZ_i^{\top}\tilde{\bbeta}-\bX_{i}^{\hat{\brho}}$.  \\
			\qquad   Step 3. Sample several projection directions $\bh_1,\ldots, \bh_K$.\\ 
			\qquad  Step 4. For a given $\bh_i$, $i=1,\ldots, K$,\\
			\qquad \qquad  (i) Compute $\left\Vert T_{n,\bh}\right\Vert_N=\left\Vert n^{-1/2}\sum_{i=1}^n \mathds{1}_{\left\{\bX_i^{\bh}\le x\right\}}\hat U_i\right\Vert_N$  with $N$ being either KS or CvM. \\
			\qquad \qquad (ii) Wild bootstrap resampling. For $b=1,\ldots, B$:\\
			
			\qquad \qquad\qquad\quad (a) Draw $\{V_i\}_{i=1}^n$, a sequence of i.i.d. random variables such that $\mathbb P(V_i=1-\kappa)$\\
			\quad\qquad \qquad\qquad\quad$=\kappa/\sqrt 5$
			and $\mathbb P(V_i=\kappa)=1-\kappa/\sqrt 5$, where $\kappa=(\sqrt 5 +1)/2$. \\
			\qquad \qquad\qquad\quad (b) Set $Y_i^{\ast} :=\bZ_i^{\top}\tilde{\bbeta}+\bX_{i}^{\hat{\brho}}+U_i^{\ast}$
			with the bootstrap residual $U_i^{\ast}=V_i\hat U_i$.\\
			\qquad \qquad\qquad\quad (c) Estimate ${\bbeta}^{\ast}$ and $\brho^{\ast}$ from $\{Y_i^{\ast},\bZ_i,\bX_i\}_{i=1}^n$ via Step 1 and Step 2.\\
			\qquad \qquad\qquad\quad (d) Obtain the estimated bootstrap residual 
			$\hat U_i^{\ast}=Y_i^{\ast} -\bZ_i^{\top}{\tilde{\bbeta}^{\ast}}-\bX_{i}^{\hat\brho^{\ast}}$.\\
			\qquad \qquad\qquad\quad (e) Compute $\left\Vert T_{n,\bh}^{\ast b}\right\Vert_N=\left\Vert n^{-1/2}\sum_{i=1}^n \mathds{1}_{\left\{\bX_i^{\bh}\le x\right\}}\hat U_i^{\ast}\right\Vert_N$.\\
			\qquad \qquad (iii) Approximate the $p$-value of $H_0^{\bh_i}$ by 
			$p_i=B^{-1}\sum_{b=1}^B\mathds{1}_{\left\{\left\Vert T_{n,\bh}\right\Vert_N\le\left\Vert T_{n,\bh}^{\ast b}\right\Vert_N\right\}}$. \\
			\qquad Step 5: Set the final $p$-value of $H_0$ as $\min_{i=1,\ldots,K}\frac{K}{i}p_{(i)}$, where $p_{(1)}\le\ldots\le p_{(K)}$.\\
			\textbf{Output}:  The  $p$-value of $H_0$.\\
			\hline
		\end{tabular}
	\end{table}

\section{Simulation study and data application}
	\label{sim}
	In this section, we carry out various simulations to illustrate the finite sample performance of our proposed projection-based tests.
	
	\subsection{Simulation scenarios}
	In order to investigate the potential  influence of  different  covariates $\bZ$  and $\bX$,    as well as the linear coefficient $\bbeta$
	and the functional coefficient $\brho$,  we consider 8 possible scenarios. Denote the $k$-th scenario as S$k$, with coefficients $\bbeta_k$
	and  $\brho_k$. The deviation from $H_0$ is measured by a coefficient $\delta_d$, with $\delta_0=0$ corresponding to the null hypothesis,
	and nonnegative $\delta_d$ for $d=1,2$, which yield various alternatives. For $H_{k,d}$, the  scenario S$k$  with coefficient $\delta_d$, the
	data is generated from
	\begin{equation}
	\label{datapro}
		Y=\bZ_k^{\top}\bbeta_k+\left\langle \bX, \brho_k\right\rangle+\delta_d \triangle_{{\btheta}_k}\left(\bX\right)+\varepsilon,
	\end{equation}
	where ${\btheta}=(1,2,2,2,1,2,3,3)^{\top}$. The non-linear terms include $\triangle_1\left(\bX\right)=\Vert \bX\Vert$,
	$\triangle_2\left(\bX\right)=25\int_{0}^1\int_{0}^1 \sin(2\pi ts)s(1-s)(1-t)\bX(s)\bX(t)dsdt$ and 
	$\triangle_3\left(\bX\right)=\left\langle e^{-\bX},\bX^2\right\rangle$.
	The scalar covariate $\bZ$ contains variables $Z_1\sim N(1,0.25)$, $Z_2\sim N(2, 1)$,    independent of $\bX$, 
	as well as variables $Z_3$, $Z_4$,  which have some certain correlation with $\bX$.
	The error $\varepsilon$ follows the normal distribution $N(0,0.01)$.
	The functional process $\bX(t)$, centered and valued in [0,1] with 200 discretized equally spaced points, is as follows:\\
	{ \bf{Brownian Motion}} (BM): denoted by $\mathbf{B}$, with eigenfunctions $\psi_j(t)=\sqrt 2 \sin \left\{\left(j-1/2\right)\pi t\right\}$,  $j\ge 1$.\\
	{ \bf{Functional Process}} (FP):  given by $\bX(t)=\sum_{j=1}^{20}\xi_j\phi_j(t)$, where $\phi_j(t)=\sqrt 2 \cos(j\pi t)$
	and $\xi_j$ are independent r.v's distributed as $N\left(0,j^{-2}\right)$.\\%, with $l=1,2$.\\
	{ \bf{Brownian Bridge}} (BB):  defined as $\bX(t)=\mathbf{B}(t)-t\mathbf{B}(1)$, whose eigenfunctions are ${\widetilde\psi}_j(t)=\psi_{j+1/2}(t)$.\\
	{ \bf{Ornstein-Uhlenbeck Process}} (OU):  the Gaussian process with mean function $\mathbb E X_t=x_0e^{-\theta t}
	+\mu\left(1-e^{-\theta t}\right)$ and covariance function 
	$\mathrm{Cov}\left(\bX(s),\bX(t)\right)=\frac{\sigma^2}{2\theta}\left(e^{-\theta\vert t-s\vert}-e^{-\theta(t+s)}\right)$, 
	where $\theta=1/3$, $\sigma=1$ and $x_0=0$.\\
	{ \bf{Geometric Brownian Motion}} (GBM):  defined as $\bX(t)=s_0\exp\left\{(\mu-\sigma^2/2)t+\sigma\mathbf{B}(t)\right\}$, with $\sigma=1$, $\mu=1/2$
	and $s_0=2$.
	
Table \ref{simulated scenarios} displays the simulation scenarios, including deviations from the null hypothesis.	
The functional coefficient in S1 and S2 is a finite linear combination of the eigenfunctions of the functional process, 
with S1 based on Brownian motion and S2 based on Brownian bridge.
 S3 to S5 consider a finite-dimensional smooth process, and $Z_3=10\zeta_3$, $Z_4=4\zeta_4$, where $\zeta_3$ and  $\zeta_4$ are third and fourth FPC scores of the functional covariate $\bX$, respectively. The  functional coefficient  is not expressible as a finite combination of eigenfunctions in the next three scenarios: S7 and S8 with the Ornstein--Uhlenbeck processes and S9 with the geometric Brownian motion. The criteria for choosing deviation coefficients $\delta_d$, $d = 1,2$,  in (\ref{datapro}) is to add difficulty in distinguishing between the null hypothesis 
and the alternative.  Densities of the response Y under different scenarios are shown in Figure \ref{density}, which  provides a graphical visualization
that all three densities (one under the null and two under the alternative) in every scenario are close, thus making  the distinction between the hypotheses tough.
Figure \ref{sce} exhibits functional curves of $\bX$ and the functional coefficients $\brho$, as well as its estimator $\hat\brho$ in each scenario. Since S4 and S5 share the same functional covariate and coefficient with S3, thus omitted to save space. Figure \ref{boxplot} exhibits the scaled deviation between the linear
coefficient $\bbeta$ and its estimator $\tilde\bbeta$ in  each scenario, defined as 
$\Vert\bbeta-\tilde\bbeta\Vert/\Vert\bbeta\Vert$.  
One can easily find that the  deviation is decreasing with the increase in sample sizes. 

	\begin{table}[h]
		\caption{Summary of the simulated scenarios.}
		\label{simulated scenarios}
		\vspace {-0.7cm}
		\begin{center}
			\resizebox{\textwidth}{28mm}{
				\begin{tabular}{ccccc}
					\hline
					Scenario & Linear parts $\bZ^{\top}\bbeta$  &  Coefficient $\brho(t)$ & Process $\bX$ & Deviation  \\
					\hline
					S1 &  $2Z_1+Z_2$ &$(2\psi_1(t)+4\psi_2(t)+5\psi_3(t))/\sqrt 2$ & BM &$\triangle_1$, $\delta=(0,2/5,4/5)^{\top}$\\
					S2 &  $3Z_1+2Z_2$ &$(2{\widetilde\psi}_1(t)+4{\widetilde\psi}_2(t)+5{\widetilde\psi}_3(t))/\sqrt 2$ & BB &$\triangle_2$, $\delta=(0,5/2,15/2)^{\top}$\\
					%S3 &  $Z_1$ &$(2\psi_2(t)+4\psi_4(t)+5\psi_7(t))/\sqrt 2$ & BM &$\triangle_1$, $\delta=(0,-1/5,-1/2)^{\top}$\\
					S3 &  $Z_1+2Z_2$ &$\sum_{j=1}^{20}2^{3/2}(-1)^jj^{-2}\phi_j(t)$ & FP &$\triangle_2$, $\delta=(0,-1,-2)^{\top}$\\
					S4 &  $Z_3+2Z_4$ &$\sum_{j=1}^{20}2^{3/2}(-1)^jj^{-2}\phi_j(t)$ & FP &$\triangle_2$, $\delta=(0,-1,-3)^{\top}$\\
					S5 &  $2Z_3$ &$\sum_{j=1}^{20}2^{3/2}(-1)^jj^{-2}\phi_j(t)$ & FP &$\triangle_1$, $\delta=(0,-1,-2)^{\top}$\\
					S6 &  $2Z_1+Z_2$ &$\sin(2\pi t)-\cos (2\pi t)$ & OU &$\triangle_2$, $\delta=(0,-1/4,-1)^{\top}$\\
					S7 &  $2Z_2$ &$t-\left(t-3/4\right)^2$ & OU&$\triangle_3$, $\delta=(0,-1/100,-1/2)^{\top}$\\
					S8 &  $2Z_1+Z_2$ &$\pi^2\left(t^2-1/3\right)$ & GBM &$\triangle_3$, $\delta=(0,5/2,9/2)^{\top}$\\
					\hline
				\end{tabular}
			}
		\end{center}
	\end{table}

\begin{figure}[h]
\begin{minipage}[t]{0.22\linewidth}
\centering
\includegraphics[width=1.65in,height=2in]{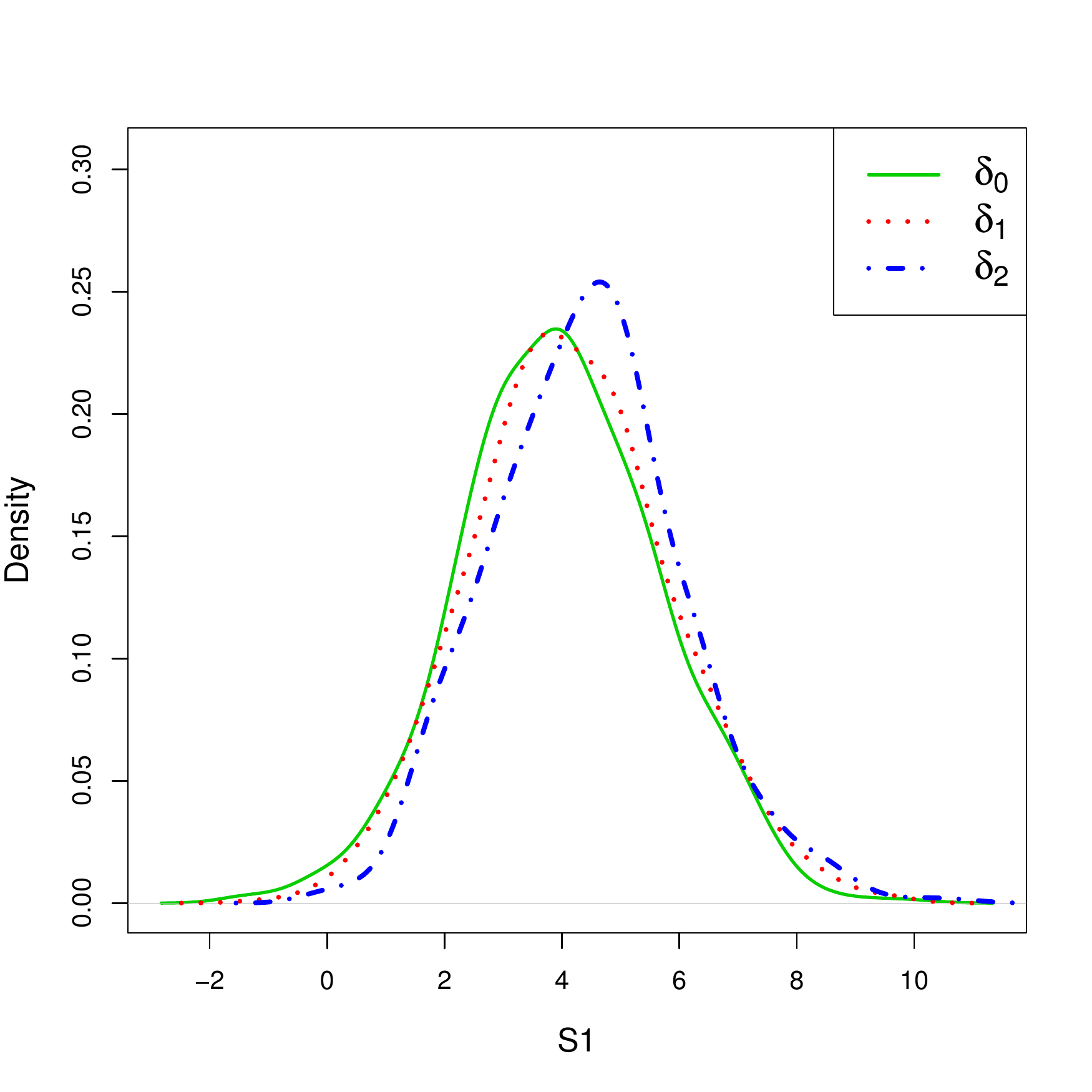}
\caption*{}
\end{minipage}
\hfill
\begin{minipage}[t]{0.22\linewidth}
\centering
\includegraphics[width=1.65in,height=2in]{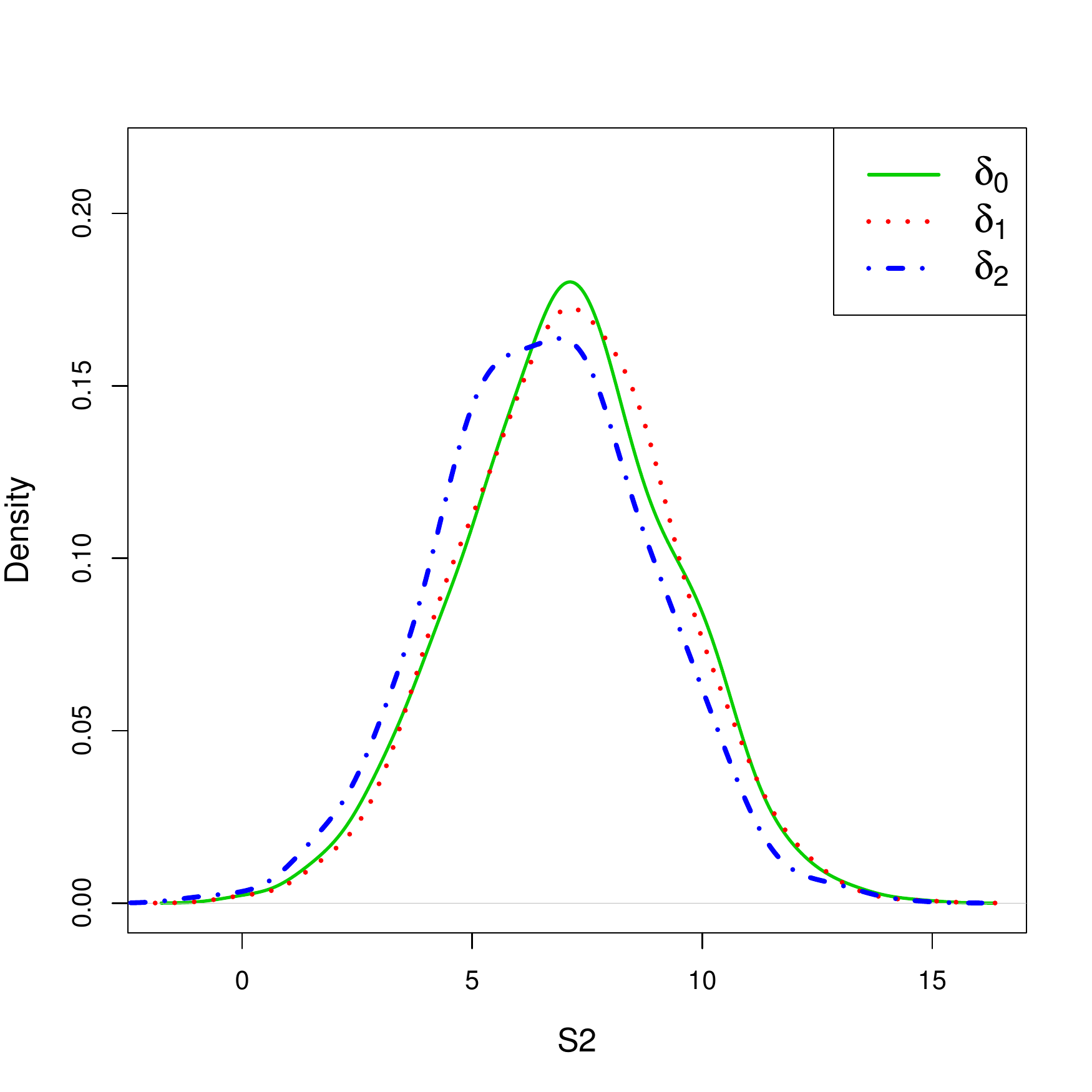}
\caption*{}
\end{minipage}
\hfill
\begin{minipage}[t]{0.22\linewidth}
\centering
\includegraphics[width=1.65in,height=2in]{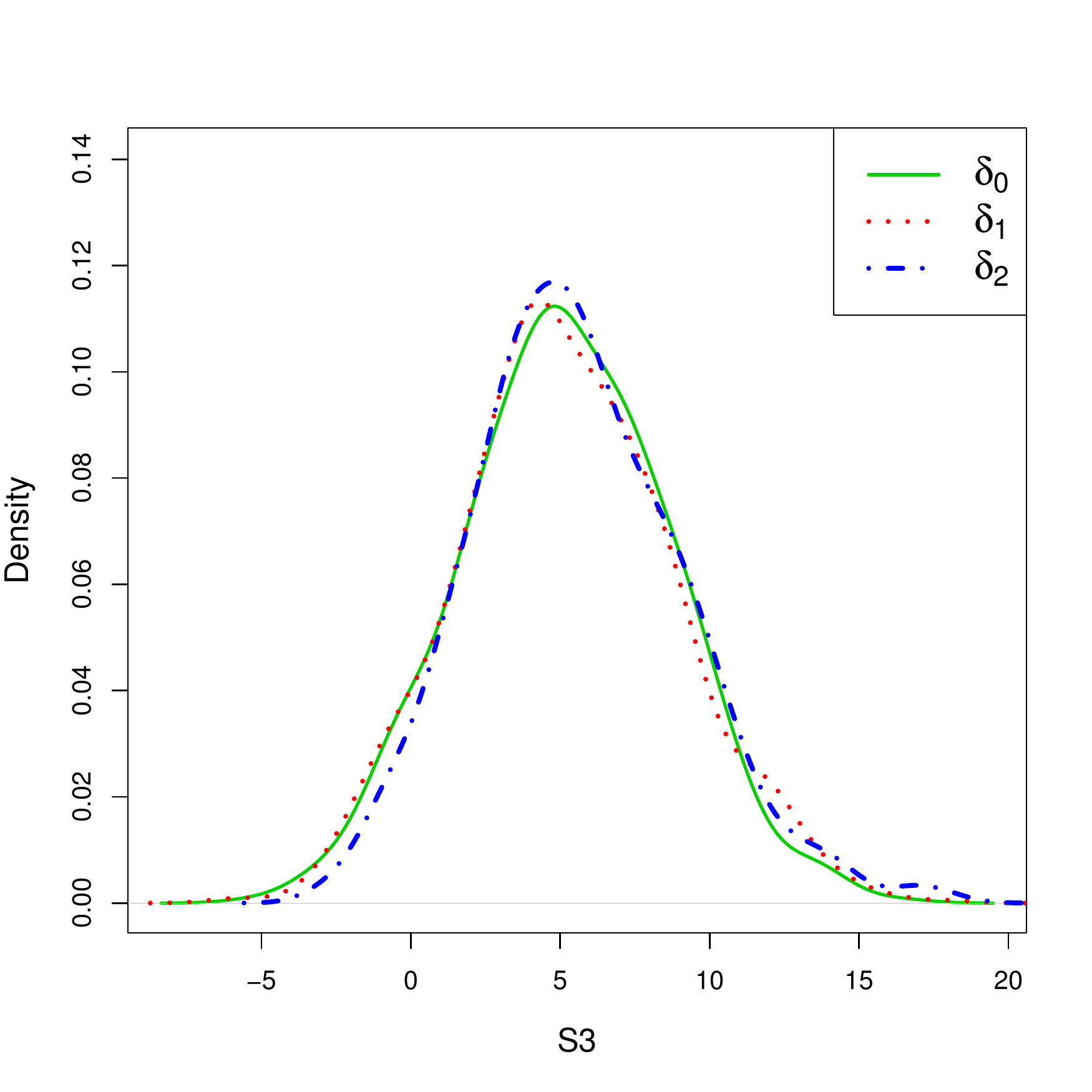}
\caption*{}
\end{minipage}
\hfill
\begin{minipage}[t]{0.22\linewidth}
\centering
\includegraphics[width=1.65in,height=2in]{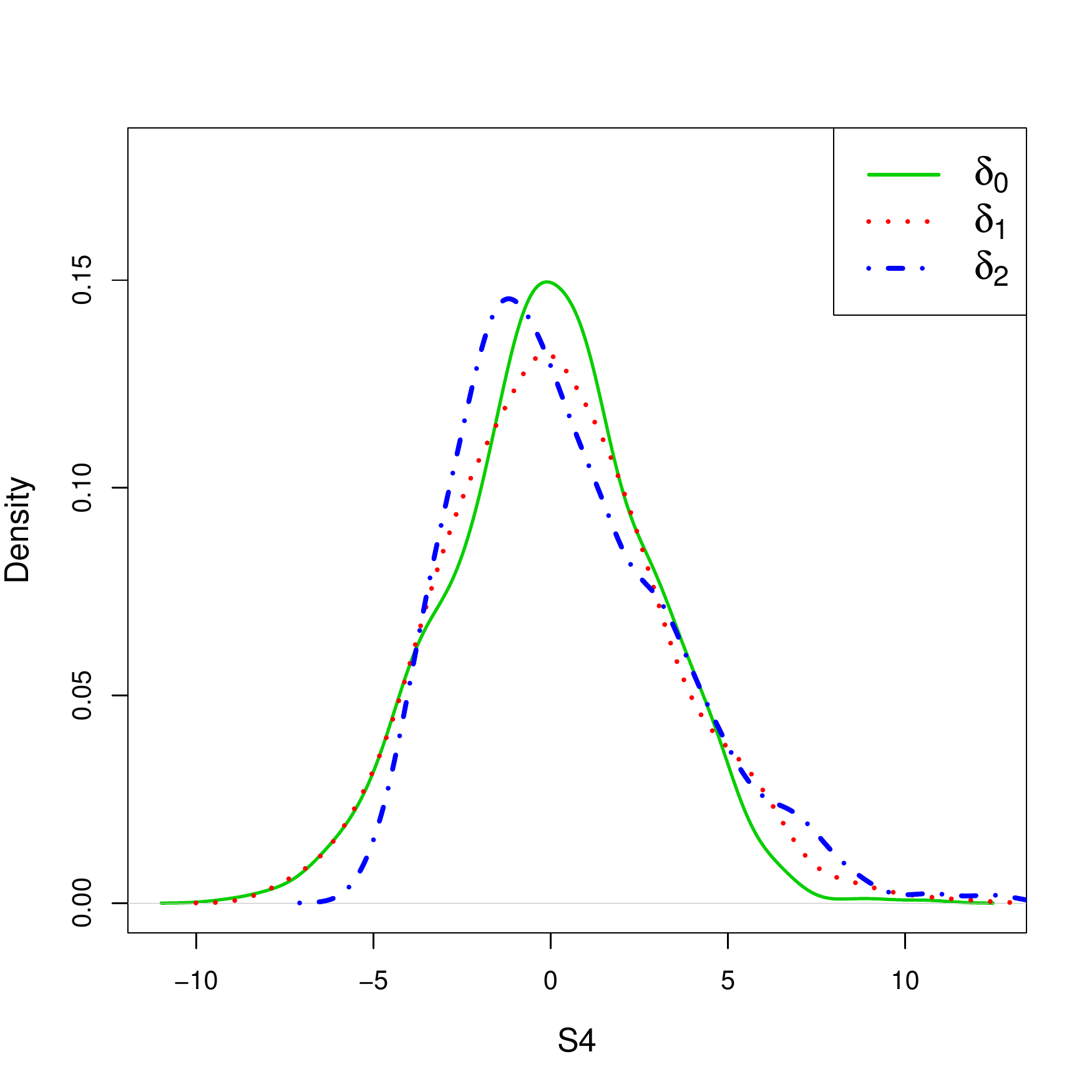}
\caption*{}
\end{minipage}
\begin{minipage}[t]{0.22\linewidth}
\centering
\includegraphics[width=1.65in,height=2in]{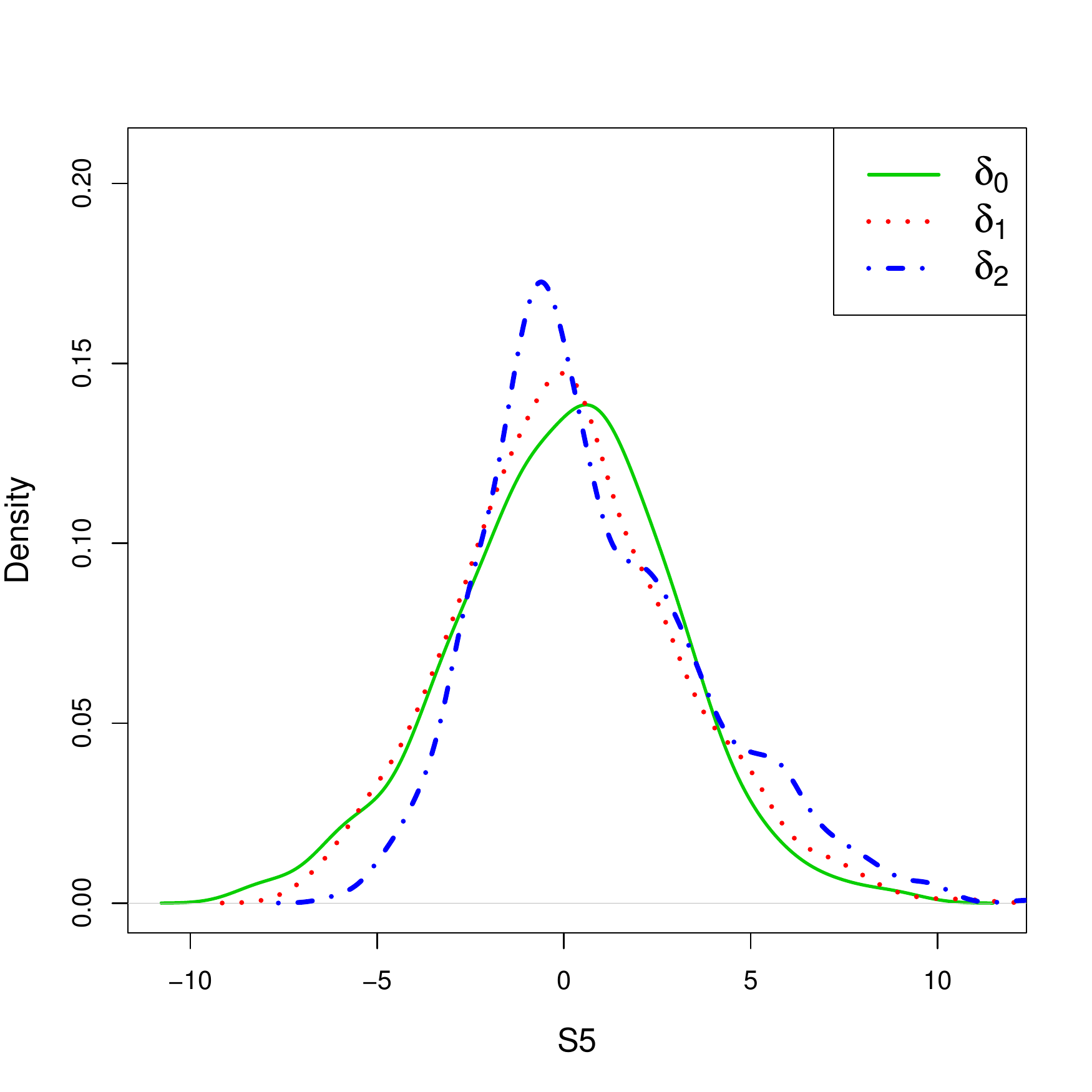}
\caption*{}
\end{minipage}
\hfill
\begin{minipage}[t]{0.22\linewidth}
\centering
\includegraphics[width=1.65in,height=2in]{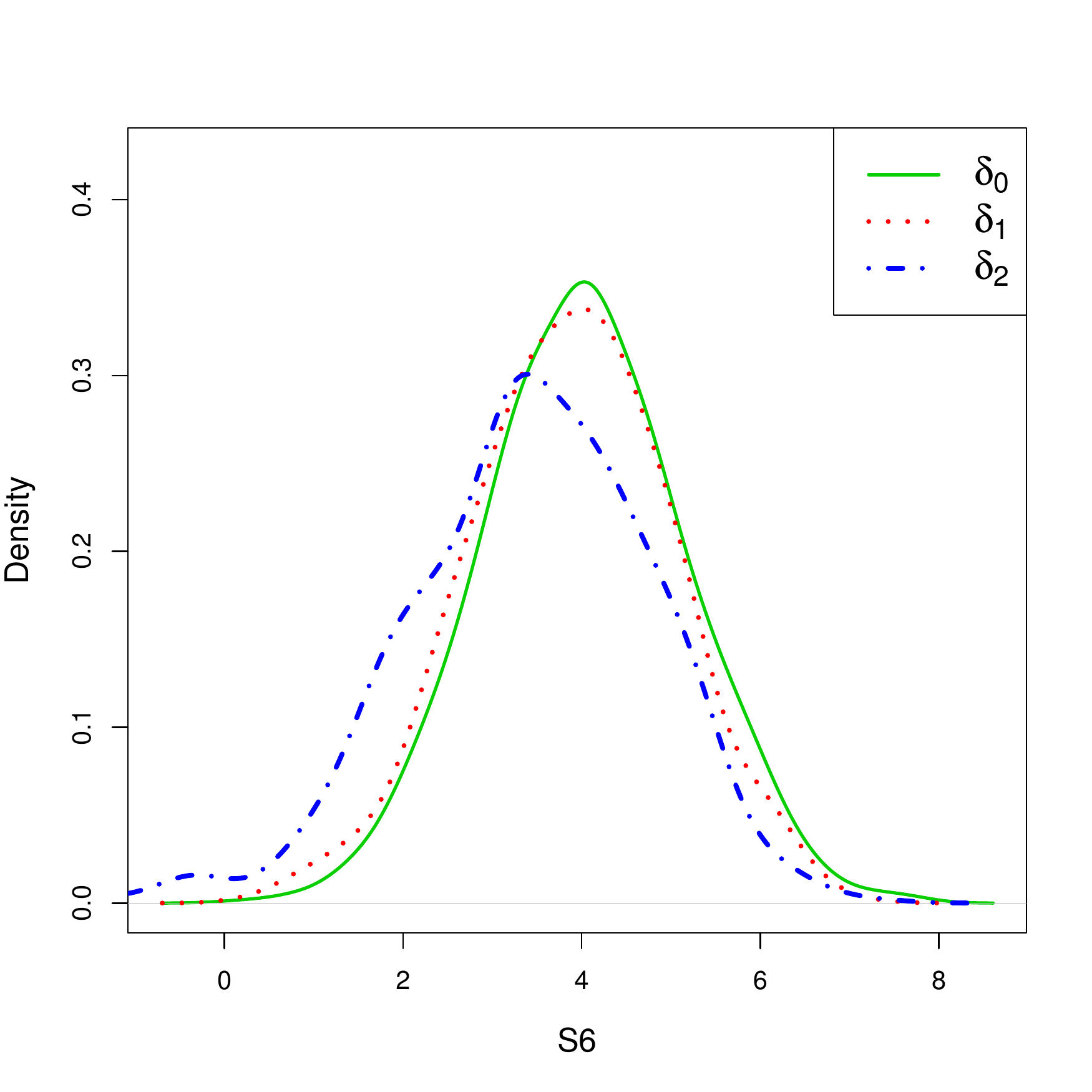}
\caption*{}
\end{minipage}
\hfill
\begin{minipage}[t]{0.22\linewidth}
\centering
\includegraphics[width=1.65in,height=2in]{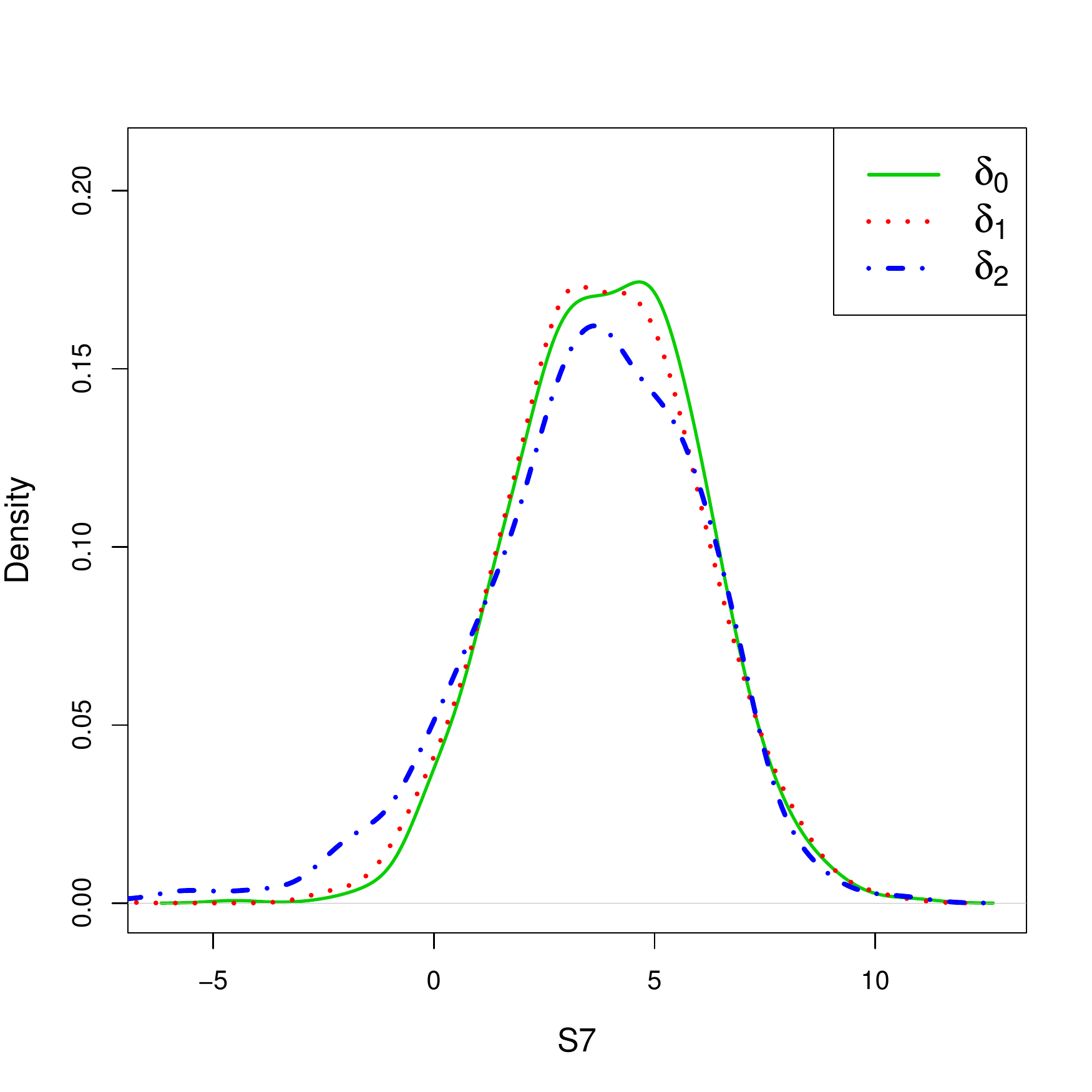}
\caption*{}
\end{minipage}
\hfill
\begin{minipage}[t]{0.22\linewidth}
\centering
\includegraphics[width=1.65in,height=2in]{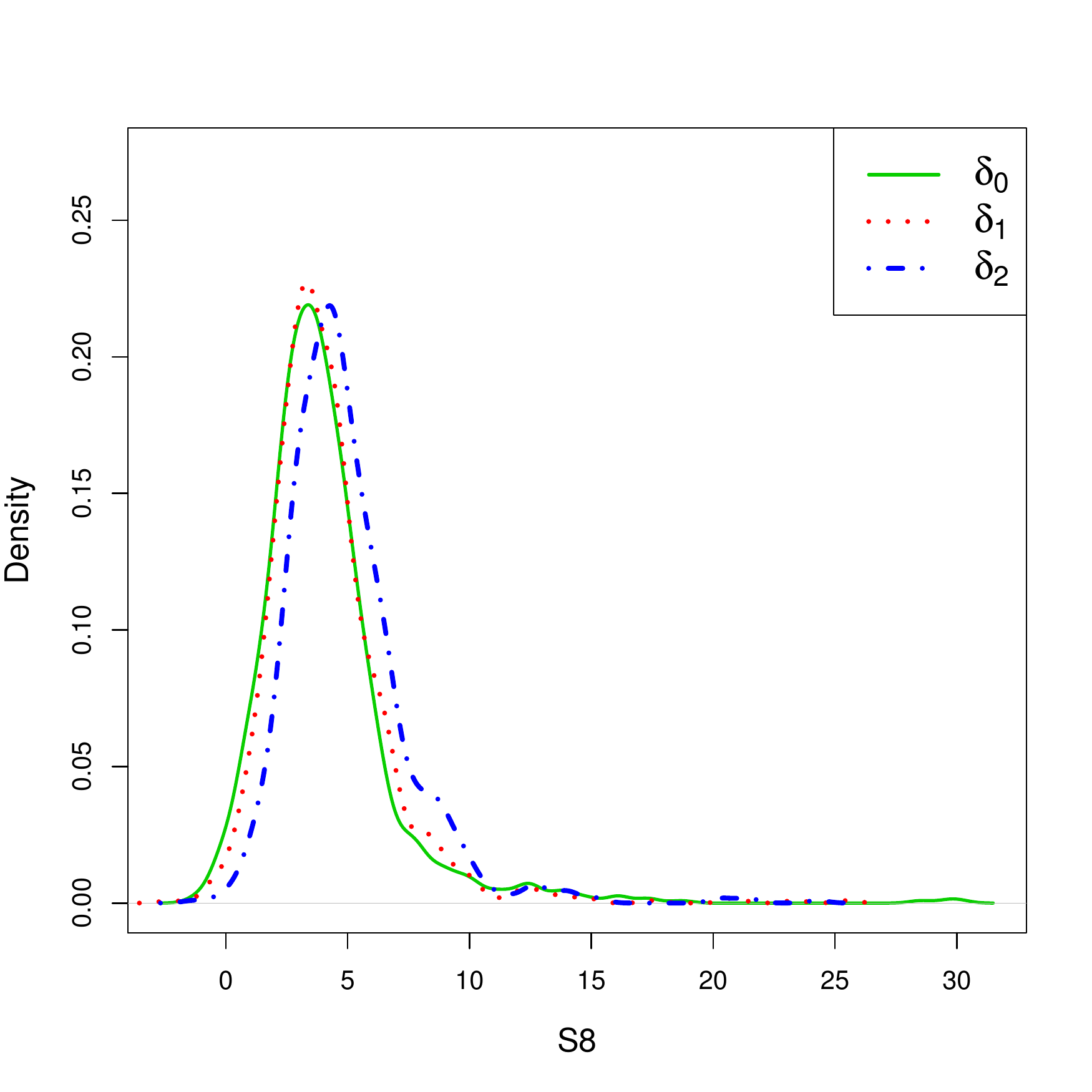}
\caption*{}
\end{minipage}
\caption{Density plots of the response $Y$ under the null 
hypothesis $H_{k,0}$ (solid) and two alternatives $H_{k,1}$ (dotted), $H_{k,2}$ (dashed) in scenario S$k$, $k=1,\cdots,8$. }
\label{density}
\end{figure}

\begin{figure}[h]
\begin{minipage}[t]{0.3\linewidth}
\centering
\includegraphics[width=2in,height=2in]{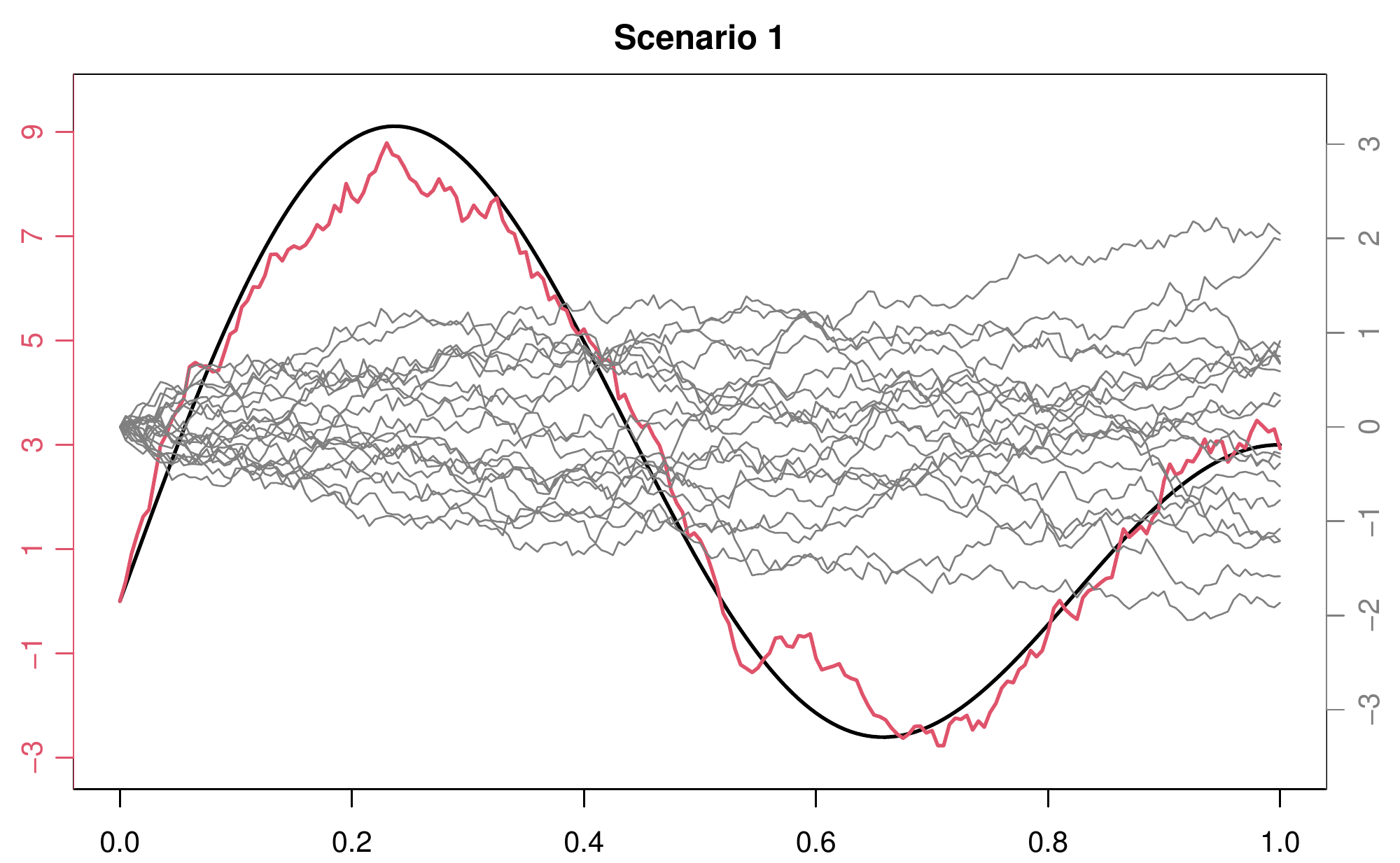}
\caption*{}
\end{minipage}
\hfill
\begin{minipage}[t]{0.3\linewidth}
\centering
\includegraphics[width=2in,height=2in]{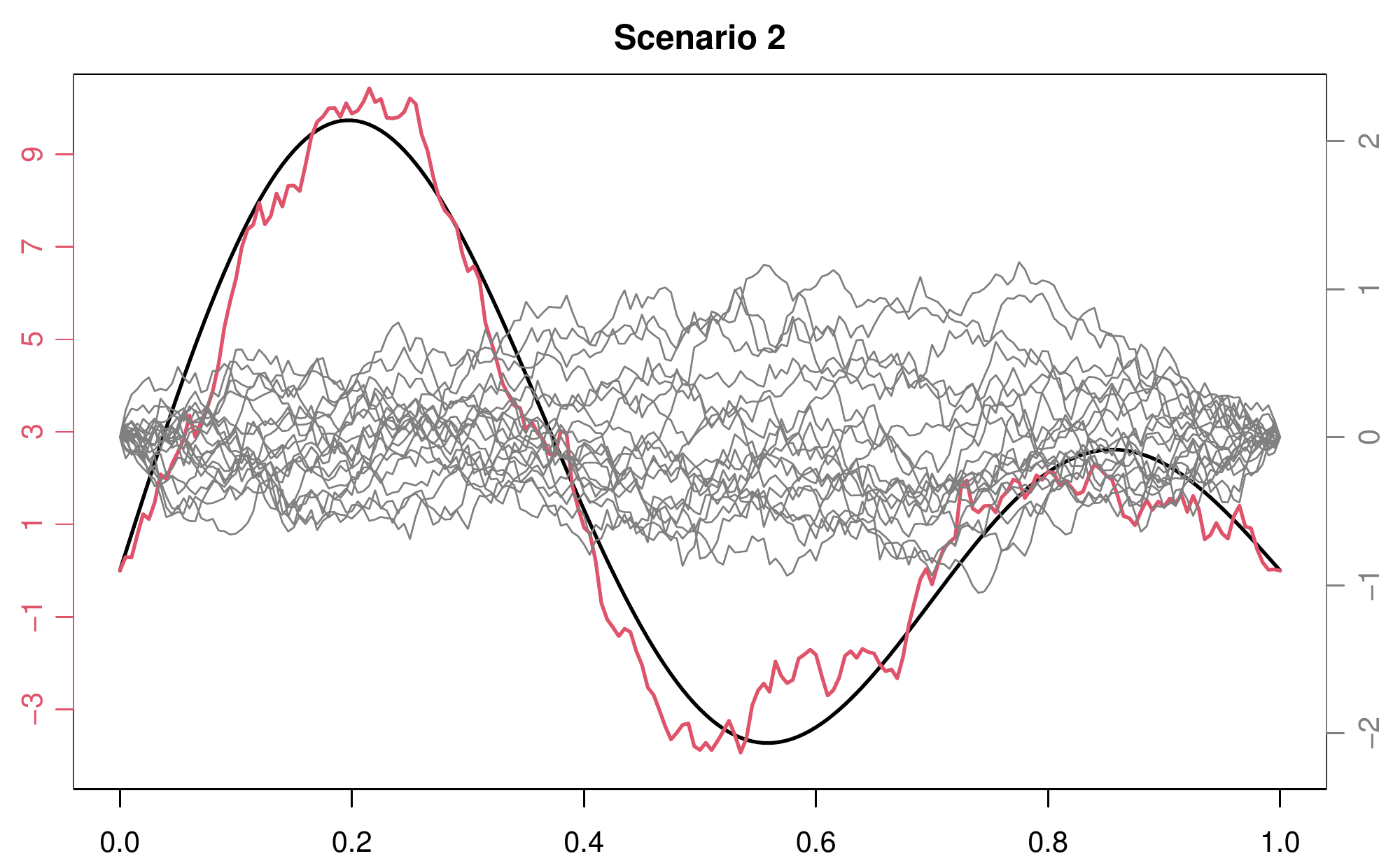}
\caption*{}
\end{minipage}
\hfill
\begin{minipage}[t]{0.3\linewidth}
\centering
\includegraphics[width=2in,height=2in]{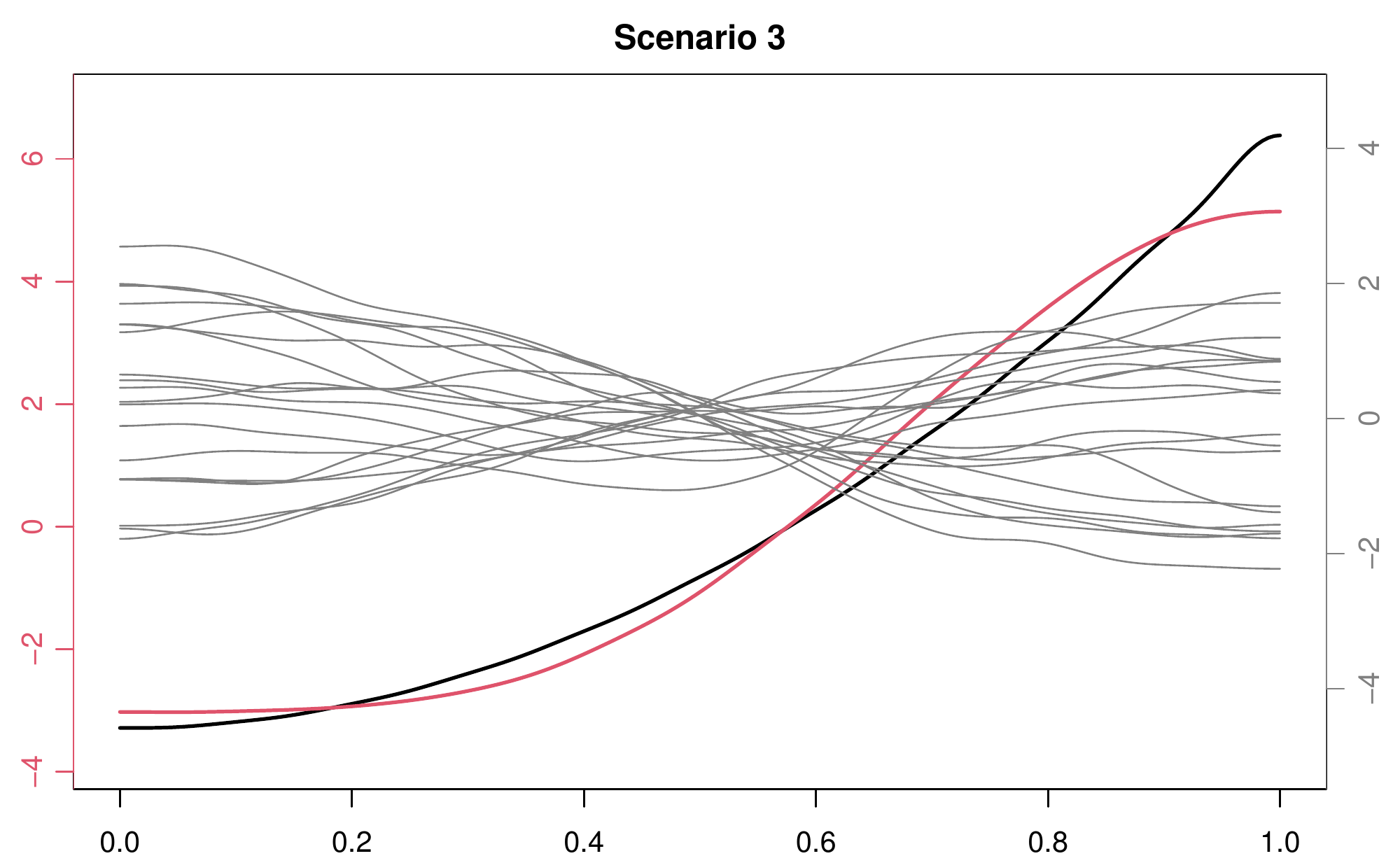}
\caption*{}
\end{minipage}
\hfill
\begin{minipage}[t]{0.3\linewidth}
\centering
\includegraphics[width=2in,height=2in]{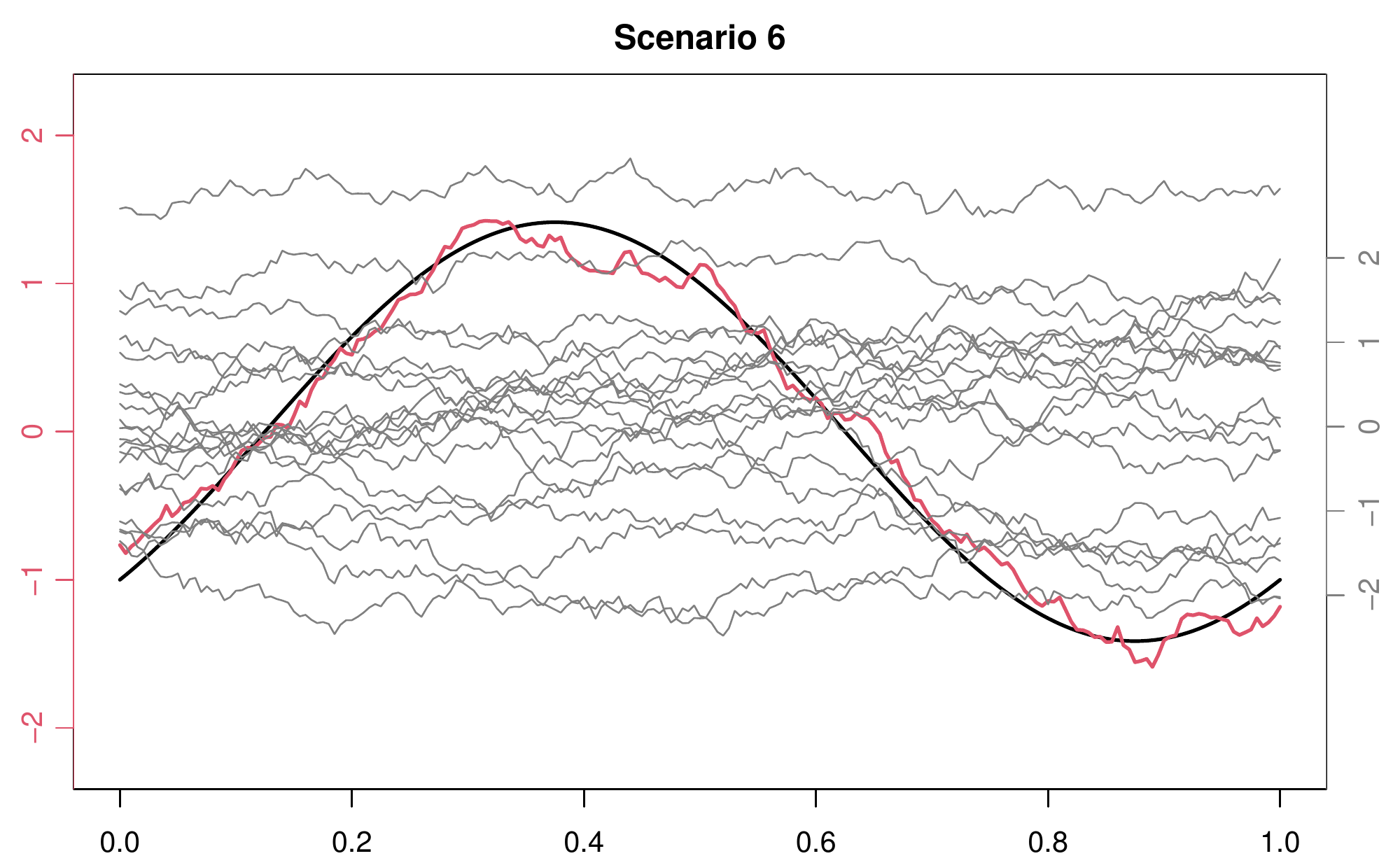}
\caption*{}
\end{minipage}
\hfill
\begin{minipage}[t]{0.3\linewidth}
\centering
\includegraphics[width=2in,height=2in]{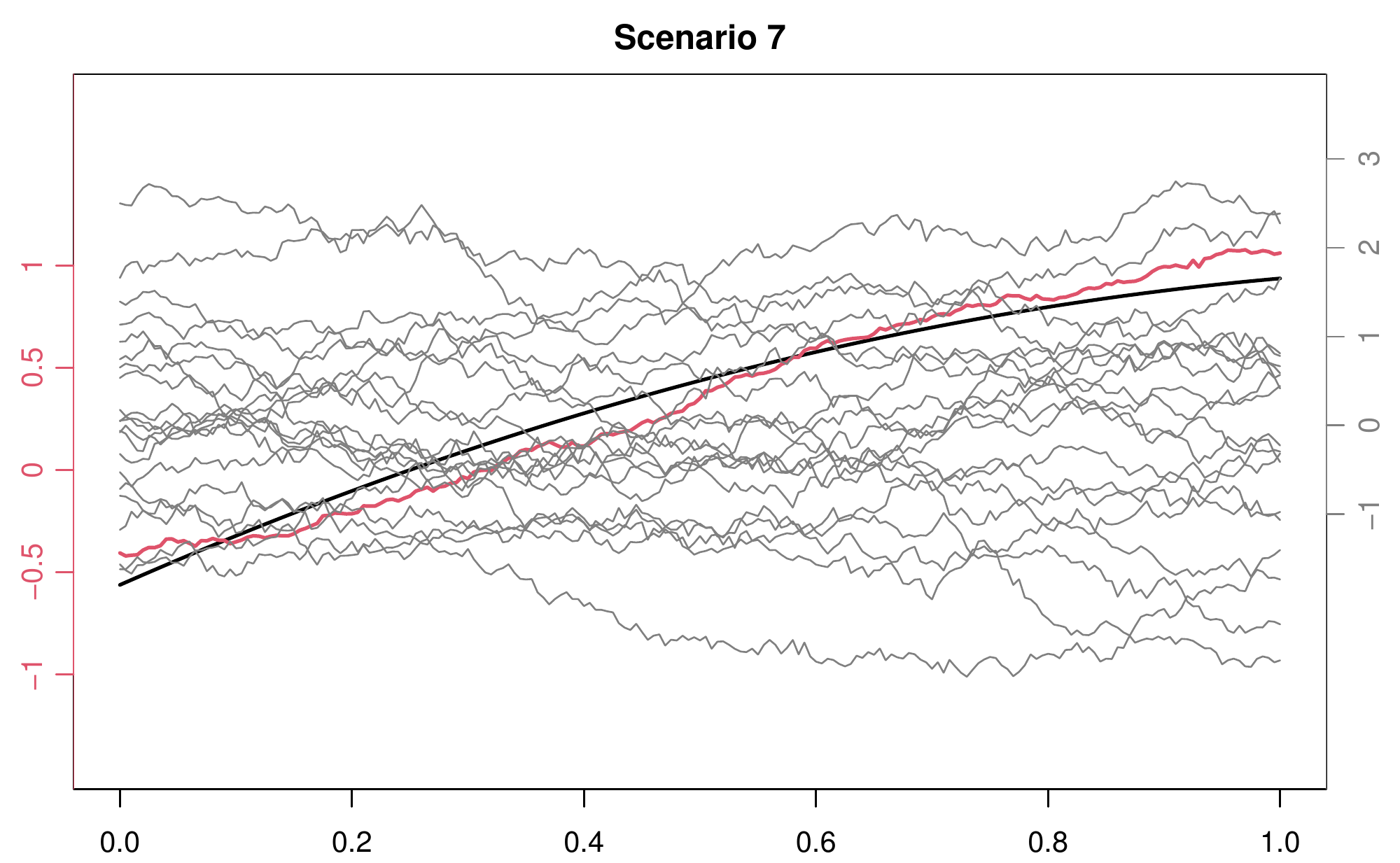}
\caption*{}
\end{minipage}
\hfill
\begin{minipage}[t]{0.3\linewidth}
\centering
\includegraphics[width=2in,height=2in]{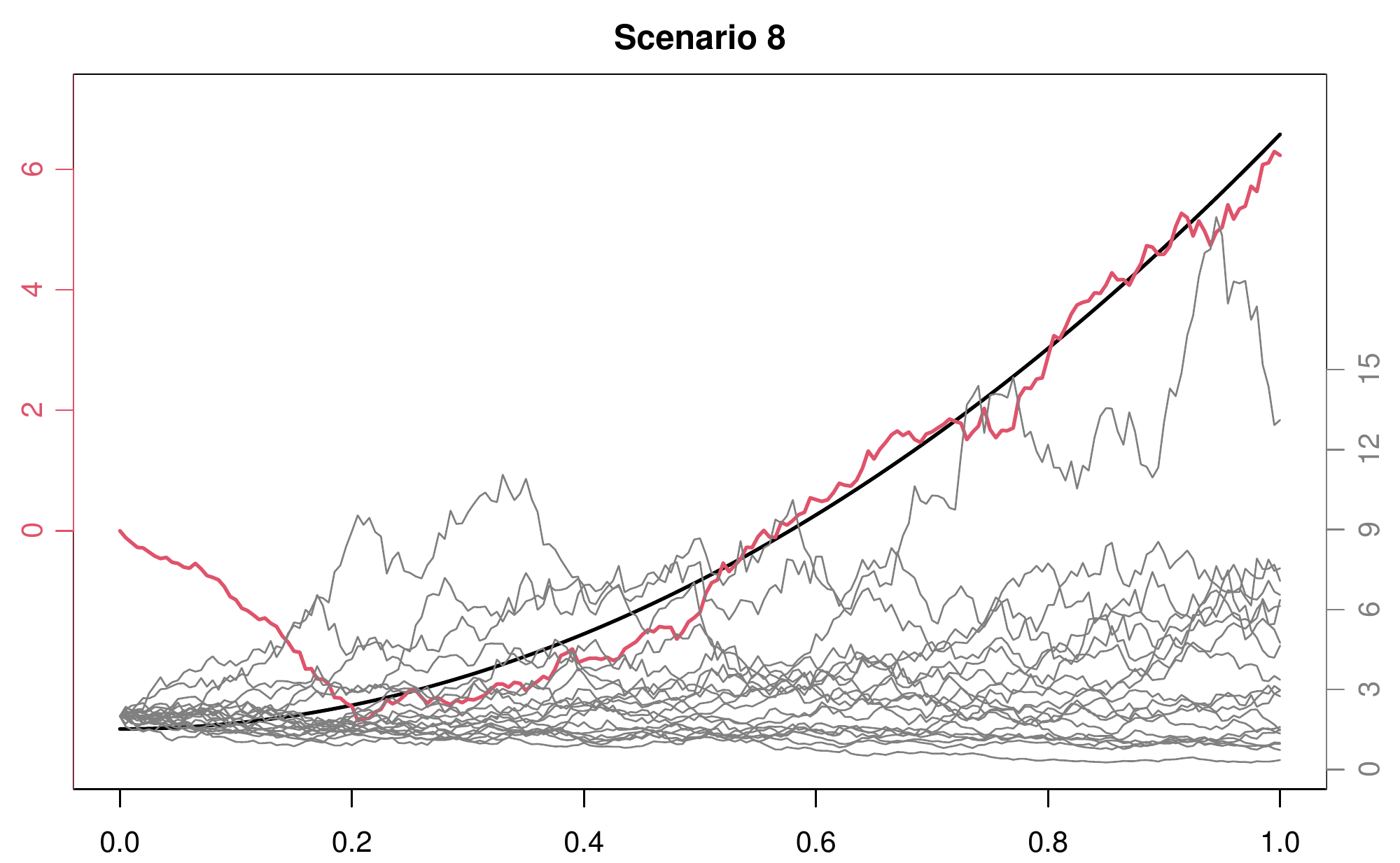}
\caption*{}
\end{minipage}
\caption{Plots of functional coefficient $\brho$ (black, left scale), its estimate $\hat\brho$ (red, left scale) and  20 realizations
of the functional covariate $\bX$ (grey, right scale) in scenario S$k$, $k=1,2,3,6,7,8$. }
\label{sce}
\end{figure}

\begin{figure}[h]
\begin{minipage}[t]{0.22\linewidth}
\centering
\includegraphics[width=1.5in,height=2in]{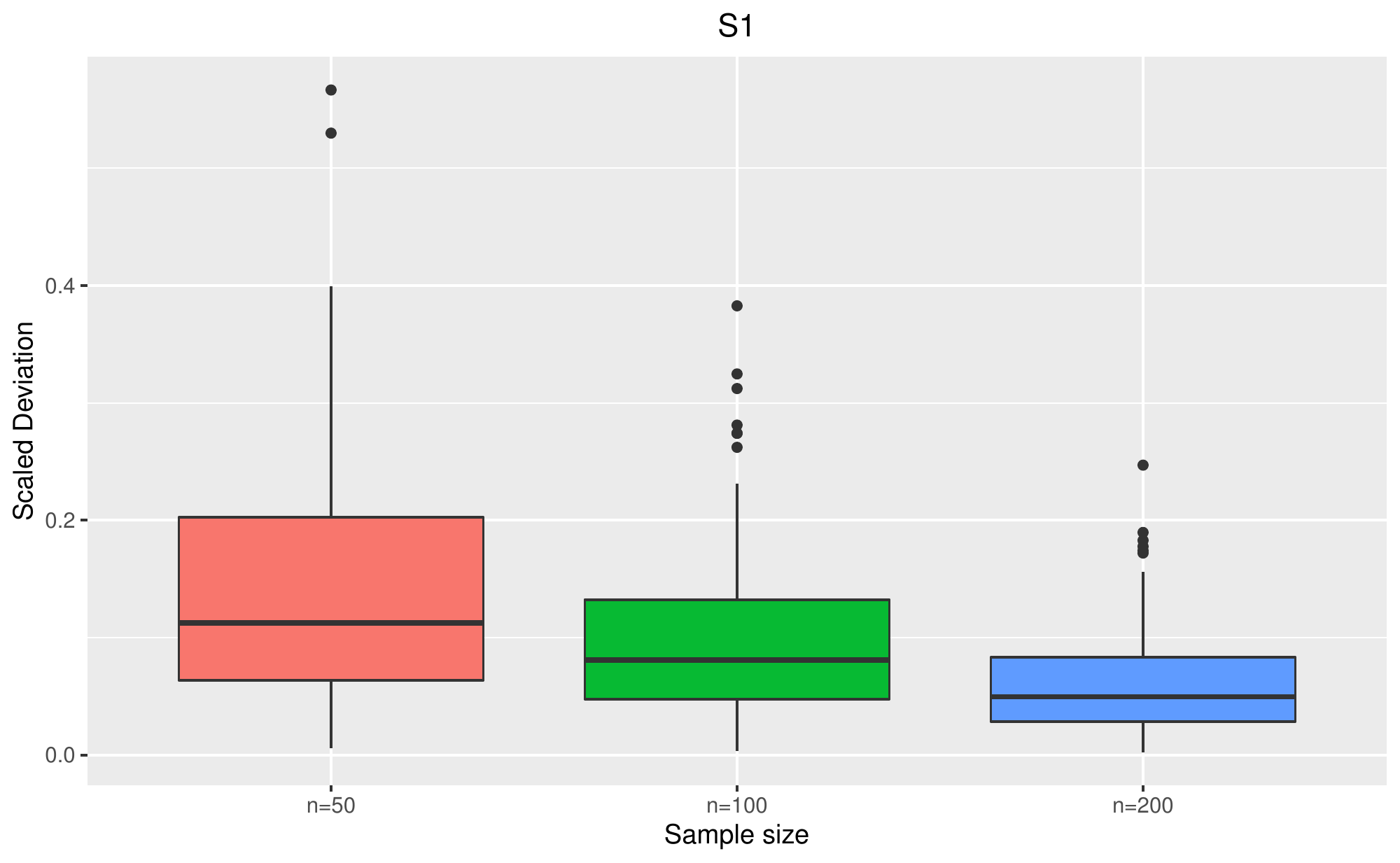}
\caption*{}
\end{minipage}
\hfill
\begin{minipage}[t]{0.22\linewidth}
\centering
\includegraphics[width=1.5in,height=2in]{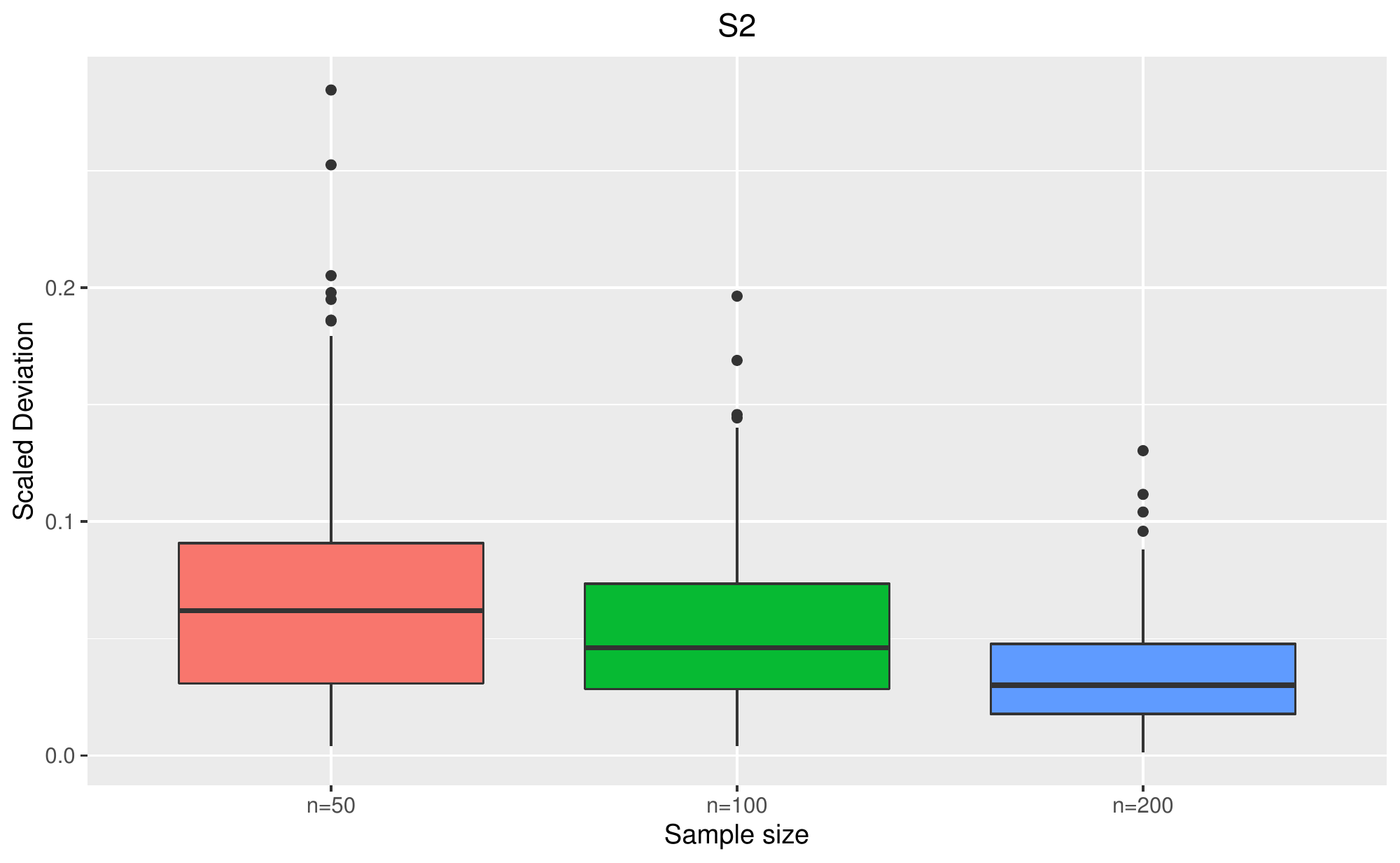}
\caption*{}
\end{minipage}
\hfill
\begin{minipage}[t]{0.22\linewidth}
\centering
\includegraphics[width=1.5in,height=2in]{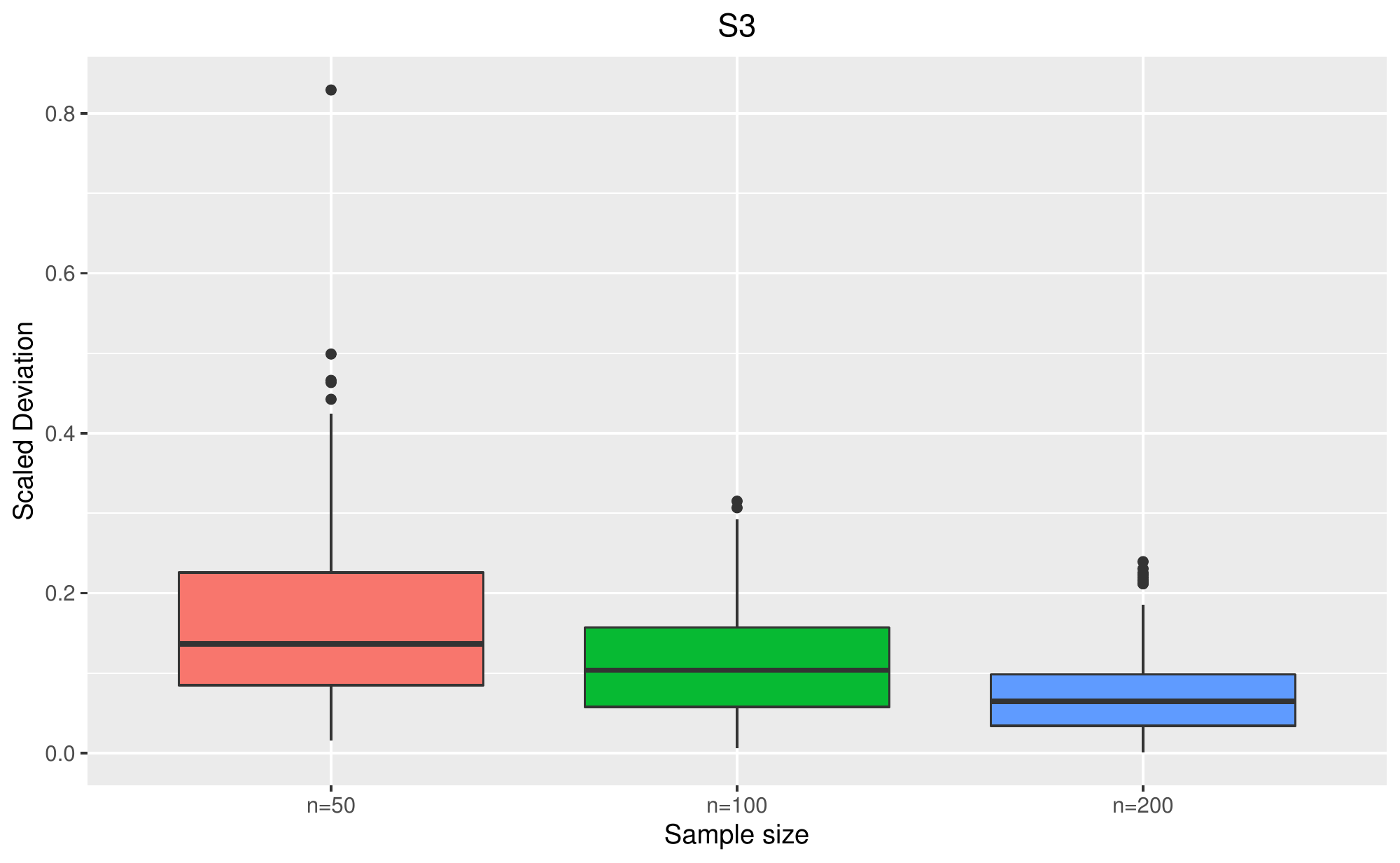}
\caption*{}
\end{minipage}
\hfill
\begin{minipage}[t]{0.22\linewidth}
\centering
\includegraphics[width=1.5in,height=2in]{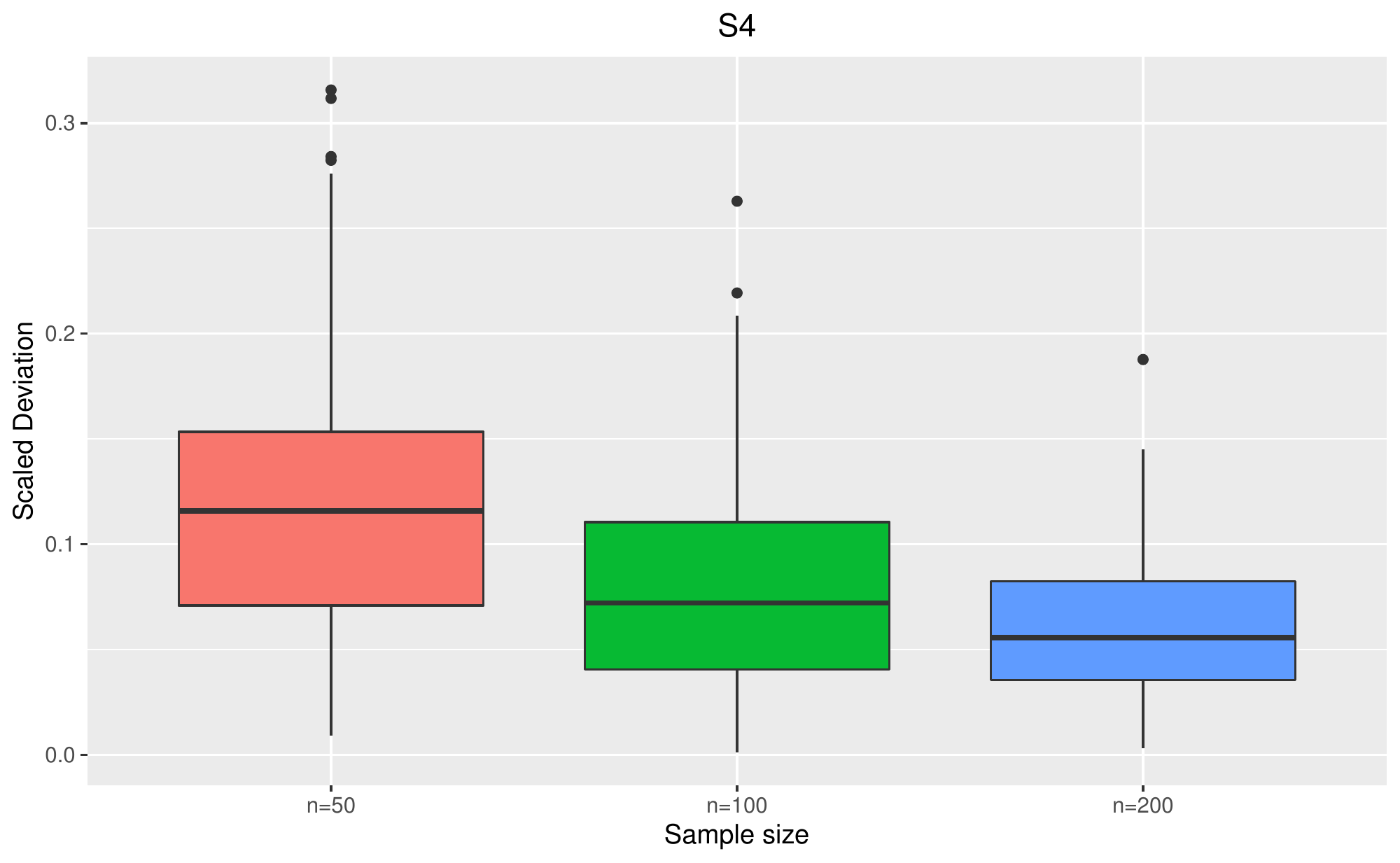}
\caption*{}
\end{minipage}
\begin{minipage}[t]{0.22\linewidth}
\centering
\includegraphics[width=1.5in,height=2in]{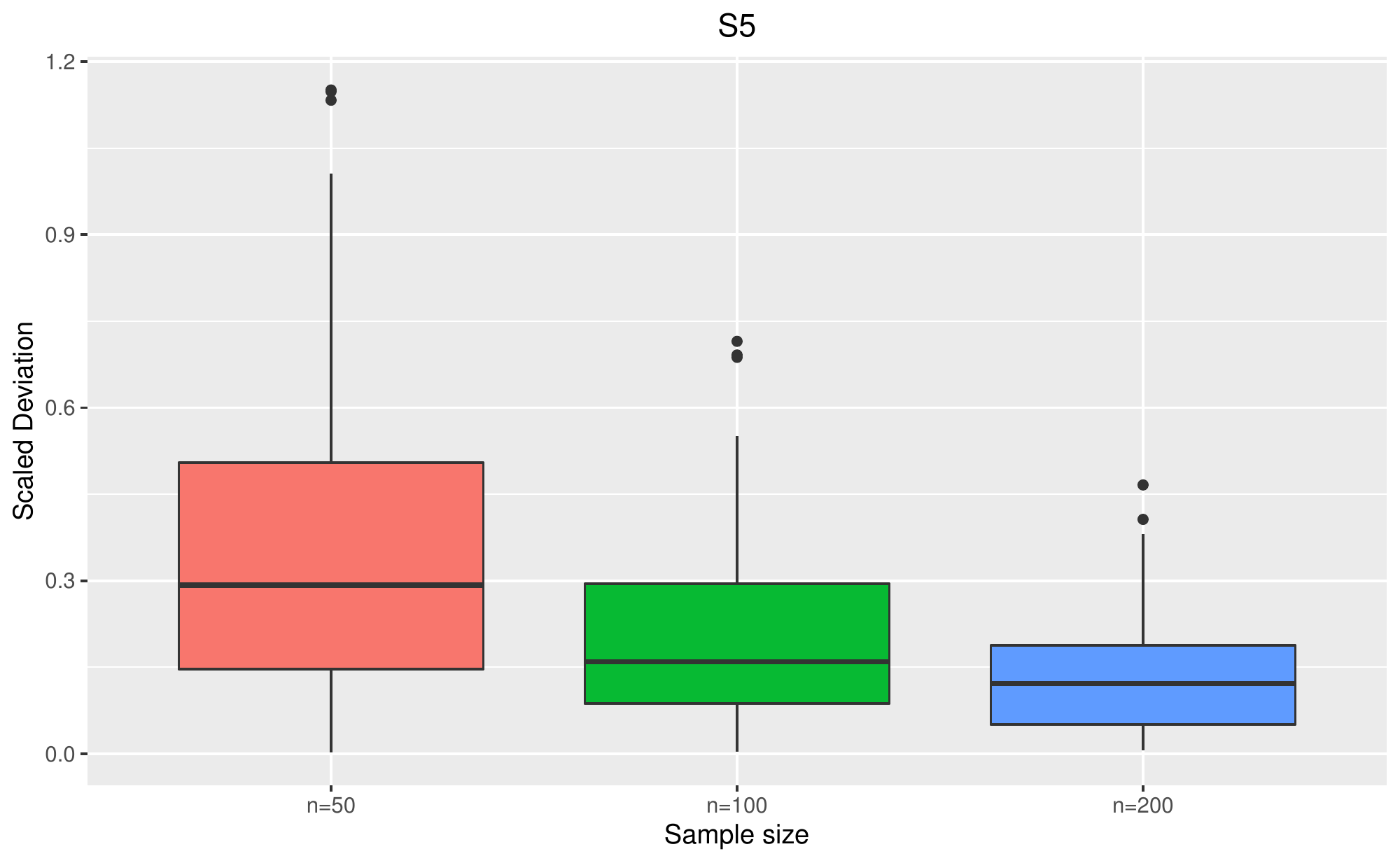}
\caption*{}
\end{minipage}
\hfill
\begin{minipage}[t]{0.22\linewidth}
\centering
\includegraphics[width=1.5in,height=2in]{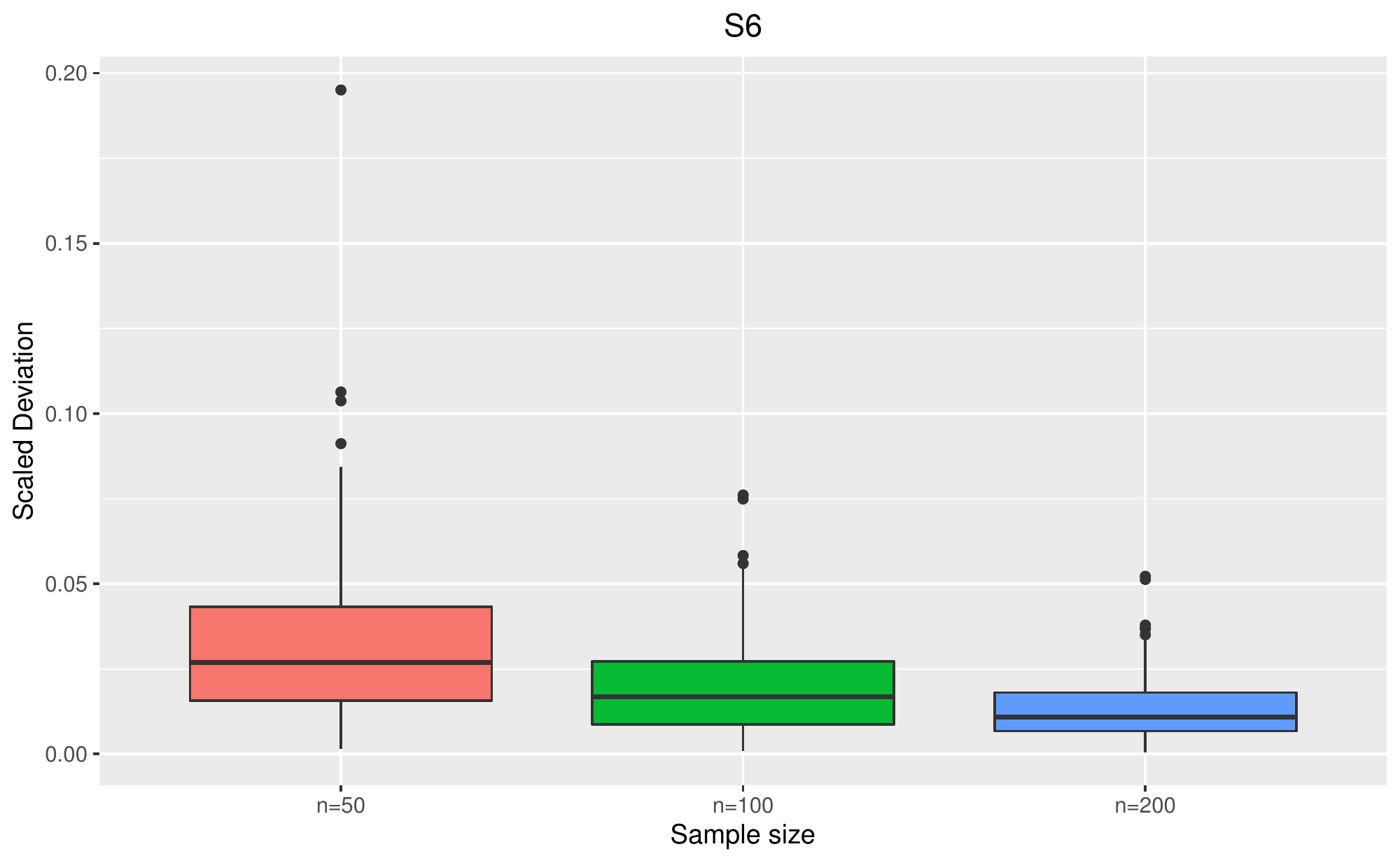}
\caption*{}
\end{minipage}
\hfill
\begin{minipage}[t]{0.22\linewidth}
\centering
\includegraphics[width=1.5in,height=2in]{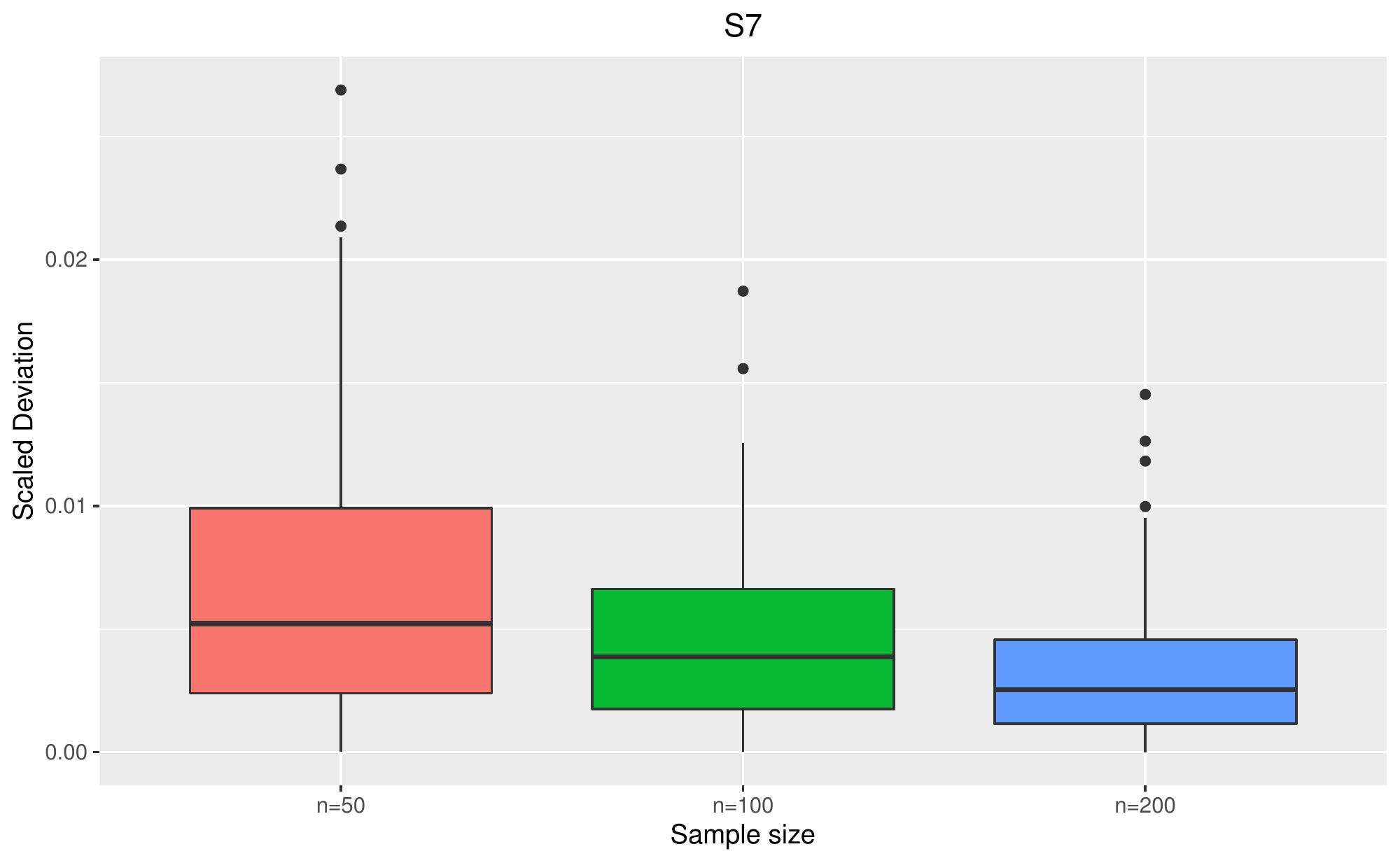}
\caption*{}
\end{minipage}
\hfill
\begin{minipage}[t]{0.22\linewidth}
\centering
\includegraphics[width=1.5in,height=2in]{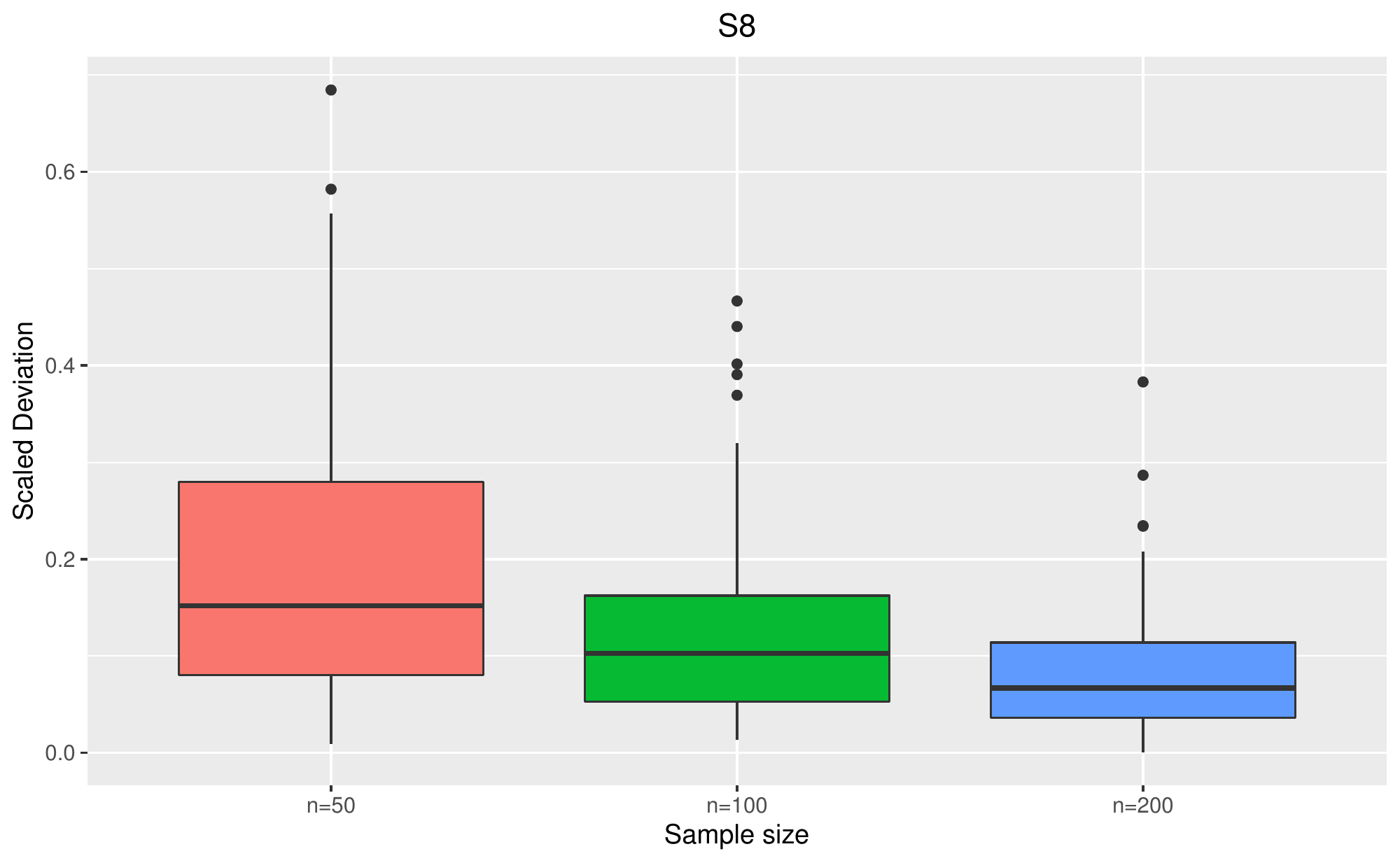}
\caption*{}
\end{minipage}
\caption{Boxplots of the scaled deviation between the linear
coefficient $\bbeta$ and its estimator $\tilde\bbeta$ in  scenario S$k$, $k=1,\cdots,8$. }
\label{boxplot}
\end{figure}

	\subsection{Size and power analysis}
	\subsubsection{Dependence on the number of projections}
First, we investigate the adequacy of the tests  with respect to   the number of projections $K$, ranging from 1 to 30. To this end, 
empirical sizes and powers are examined  in each scenario with $M=500$ Monte Carlo experiments and  $B=10000$ bootstrap samples, 
at level $\alpha=0.01,0.05,0.10$.  The sample size $n$ is taken to be 50, 100, and 200, with random projections drawn from the data-driven
approach in Table  \ref{Algorithml}.
Figure \ref{Ksize} shows the empirical  sizes of the CvM and KS tests
under the null hypothesis. The empirical rejection rate for each scenario at significance level $\alpha=0.05$   is displayed in Figure \ref{Kpower}.
The main findings are summarized as follows:	
\begin{enumerate}
\item[(i)] \textbf{ L-shaped patterns in the size curves}.  It is clear from Figure \ref{Ksize} that the empirical sizes  
present  apparent decrease as the number $K$ increases, and stabilize below the  significance level  in each scenario. This is caused by the conservativeness of the FDR method, leading to an increment of the type I error for large $K$, see \cite{benjamini2001control}.

% by the conservativeness of the FDR correctionâunder H0, it ensures that the rejection rate is at most 

\item[(ii)] \textbf{Occasional bumps yielding power gains}.  We can see from Figure \ref{Kpower} that 
 the empirical powers remain constant with large K, and sharp  jumps  
 exist  at some moderate  values of K,  providing  a significant power gain, especially for the sample 
 size $n=200$.

\item[(iii)] \textbf{Predominance of CvM over KS}.  In general,  the CvM tests perform much better than the KS tests in power  and no worse in size.
It is reasonable since quadratic norms in goodness-of-fit are usually  more powerful than sup-norms.
\end{enumerate}

For large $K$, the tests have an obvious  under-rejection of the null hypothesis but a strong  power  against  the alternative.
A small $K$  may  result in  a large size, together with a weak power. 
Combining these facts,  we propose to choose a moderate  number $K=7$, which 
achieves a  perfect balance between the size and power performance. 
For $n=200$ and $K=7$,  both CvM and KS tests   show accurate calibrations  almost in all scenarios  based on  three significance levels, except  $\alpha=0.10$ for scenarios S2 and S8, and reach a high power either  $d=1$ or $d=2$. 
Moreover,  the computational ease brought by a not large $K$ is rather important, which avoids a large number of  bootstrap  times $B$ to maintain the precision, see \citet{cuesta2019goodness}.

\begin{figure}[h]
\begin{minipage}[t]{0.3\linewidth}
\centering
\includegraphics[width=2.2in,height=2in]{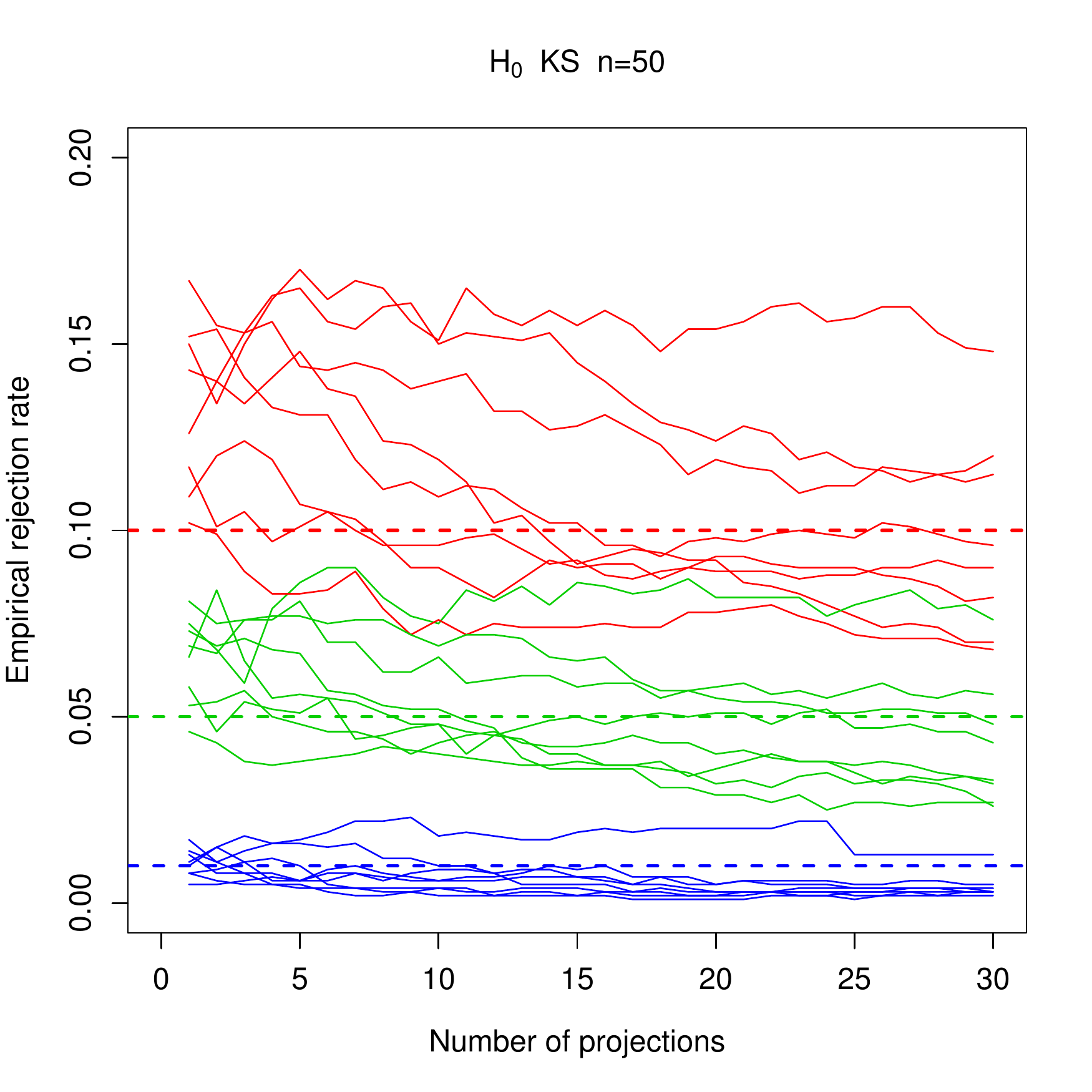}
\caption*{}
\end{minipage}
\hfill
\begin{minipage}[t]{0.3\linewidth}
\centering
\includegraphics[width=2.2in,height=2in]{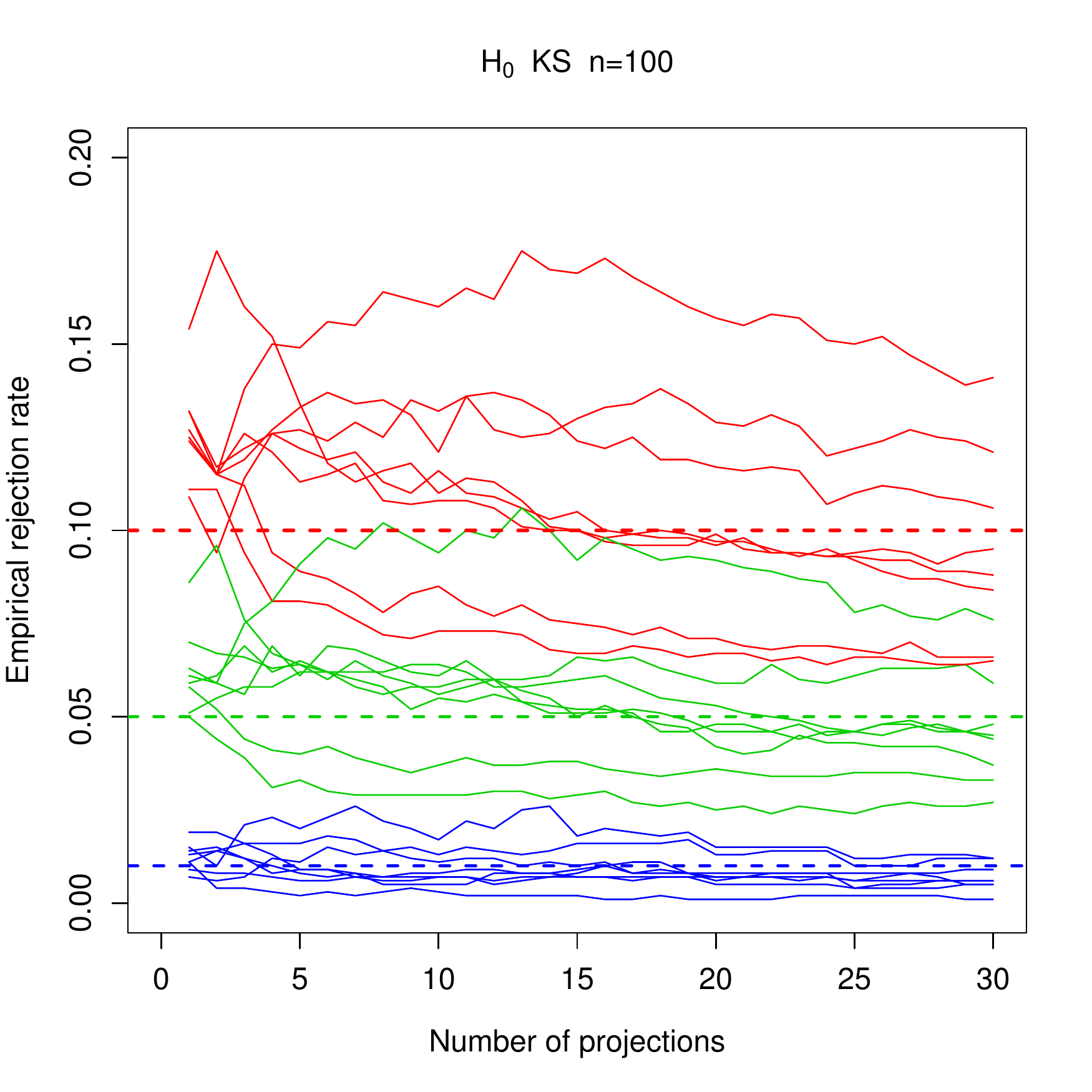}
\caption*{}
\end{minipage}
\hfill
\begin{minipage}[t]{0.3\linewidth}
\centering
\includegraphics[width=2.2in,height=2in]{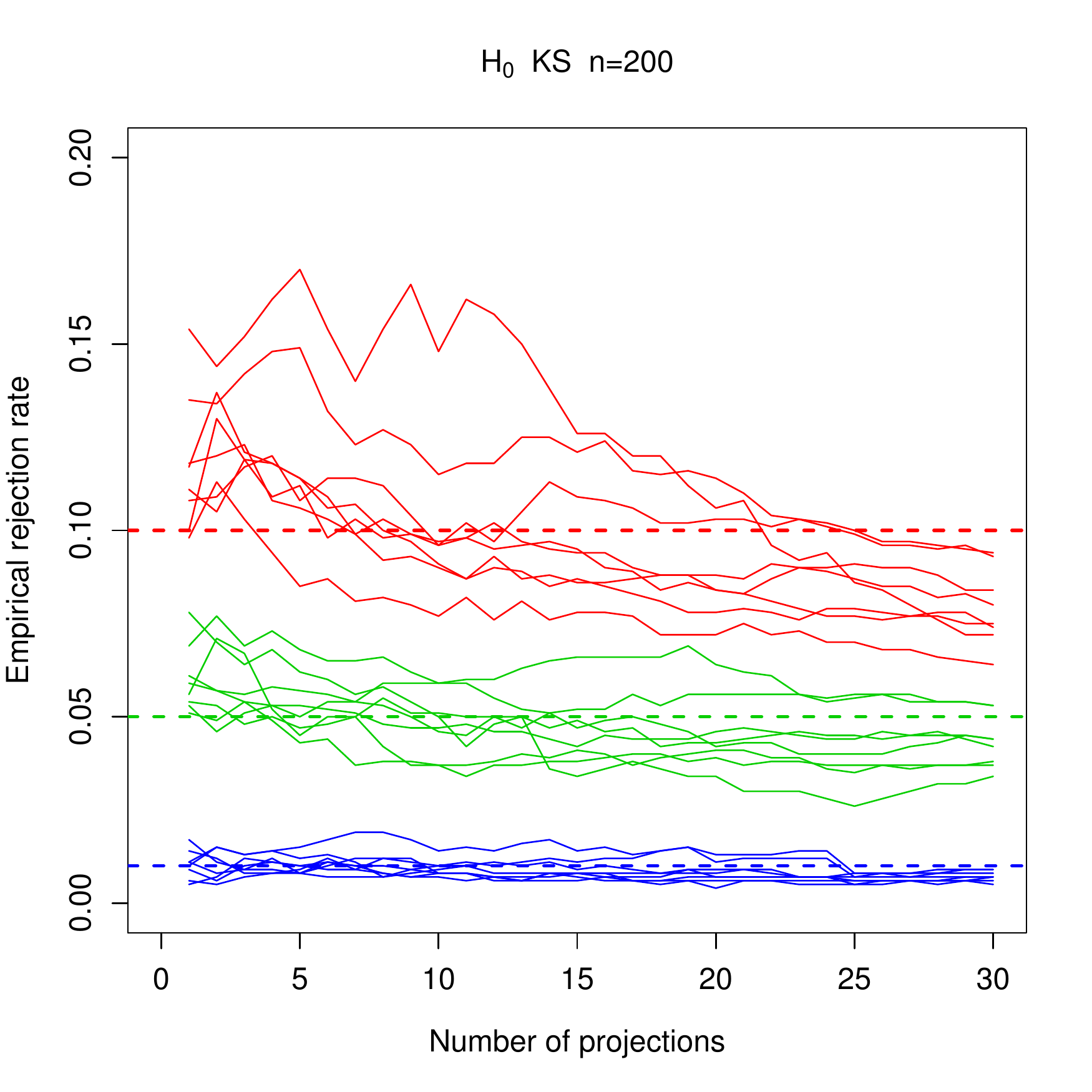}
\caption*{}
\end{minipage}
\hfill
\begin{minipage}[t]{0.3\linewidth}
\centering
\includegraphics[width=2.2in,height=2in]{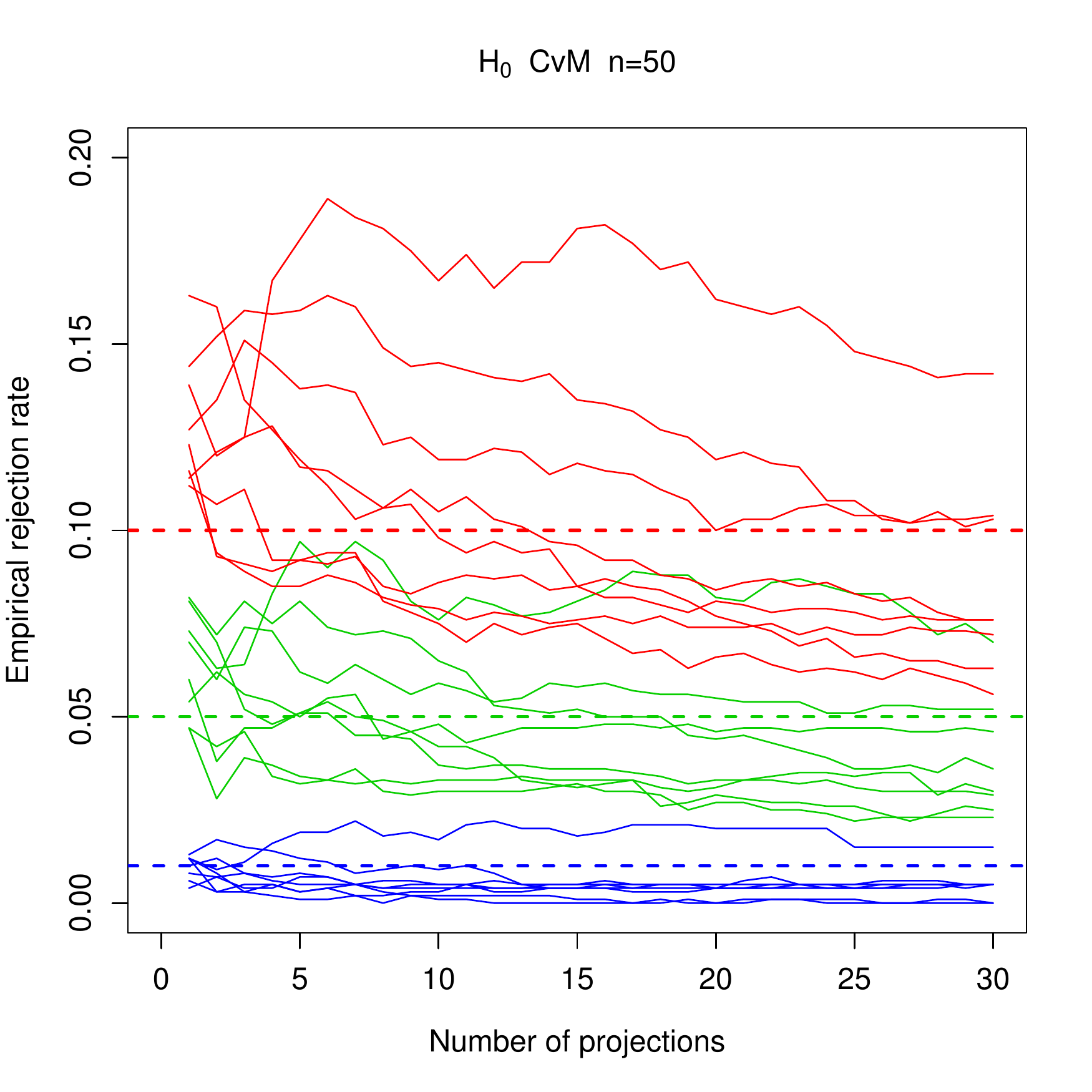}
\caption*{}
\end{minipage}
\hfill
\begin{minipage}[t]{0.3\linewidth}
\centering
\includegraphics[width=2.2in,height=2in]{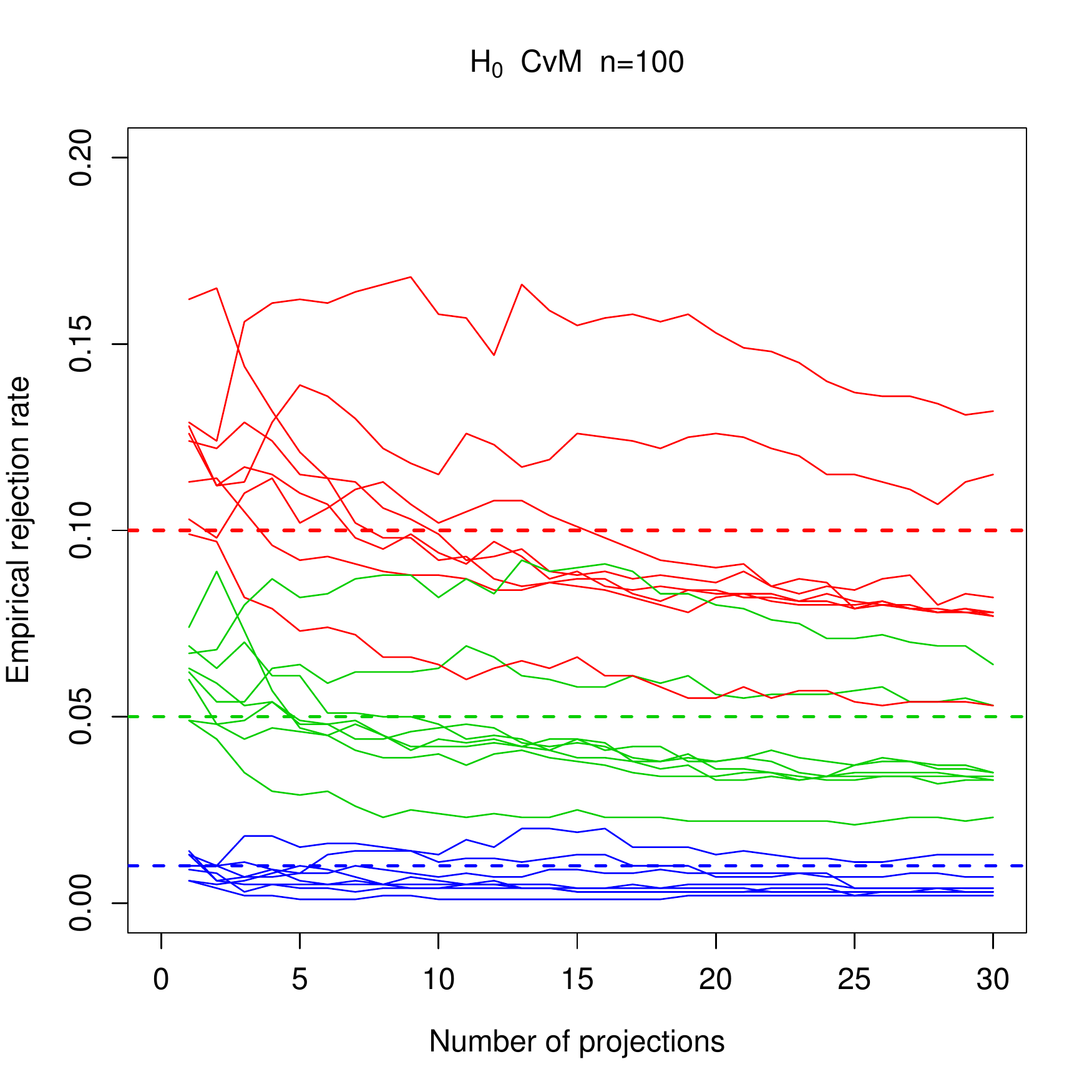}
\caption*{}
\end{minipage}
\hfill
\begin{minipage}[t]{0.3\linewidth}
\centering
\includegraphics[width=2.2in,height=2in]{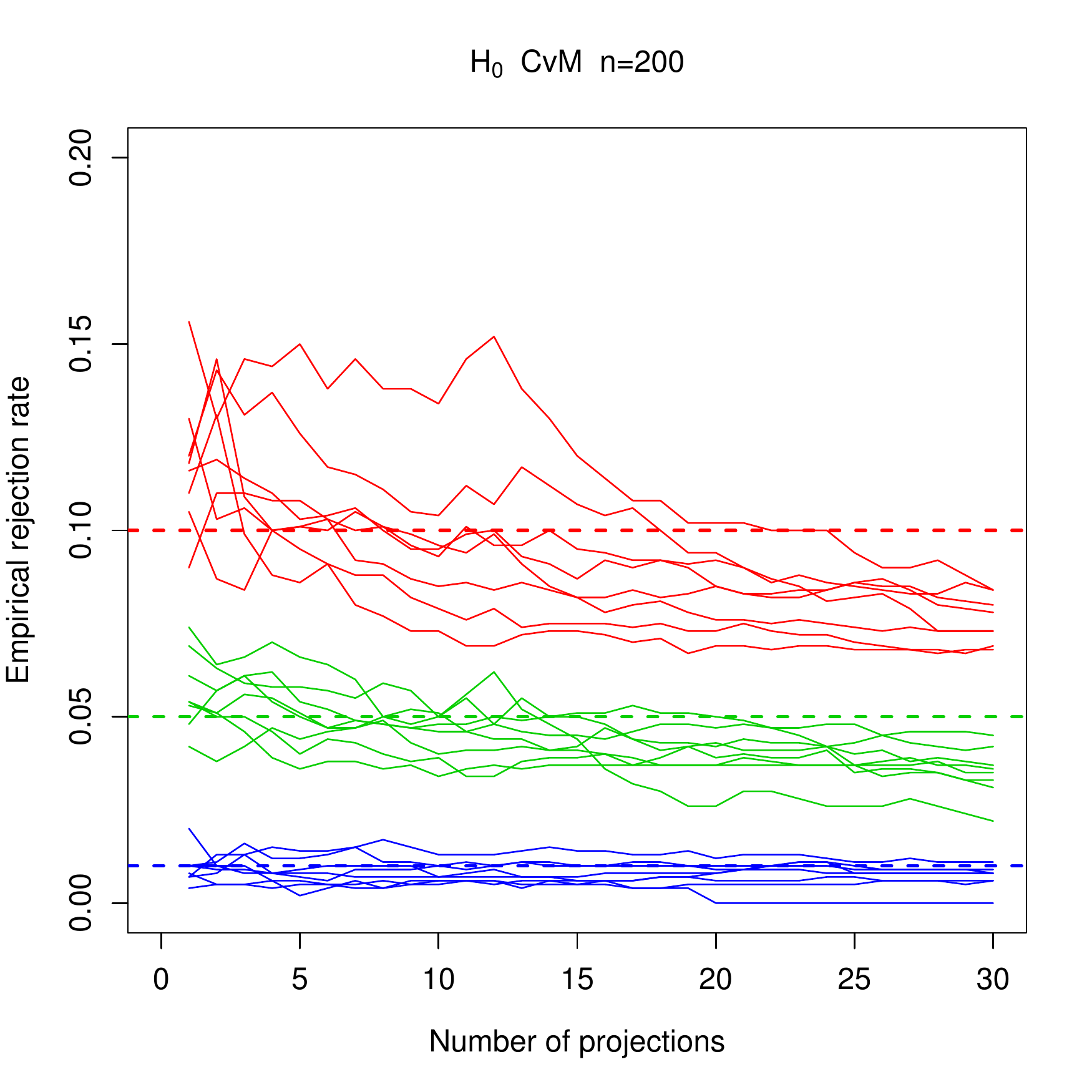}
\caption*{}
\end{minipage}
\caption{Empirical sizes of the CvM and KS tests with the number of projections from 1 to 30  in scenario S$k$, $k=1,\cdots,8$, based on the sample sizes $n=50$, $100$, and $200$, respectively.  The upper, middle, and lower solid lines correspond to the significance levels  $\alpha=0.10$, $0.05$, and $0.01$, respectively.}
\label{Ksize}
\end{figure}

\begin{figure}[h]
\begin{minipage}[t]{0.3\linewidth}
\centering
\includegraphics[width=2in,height=2in]{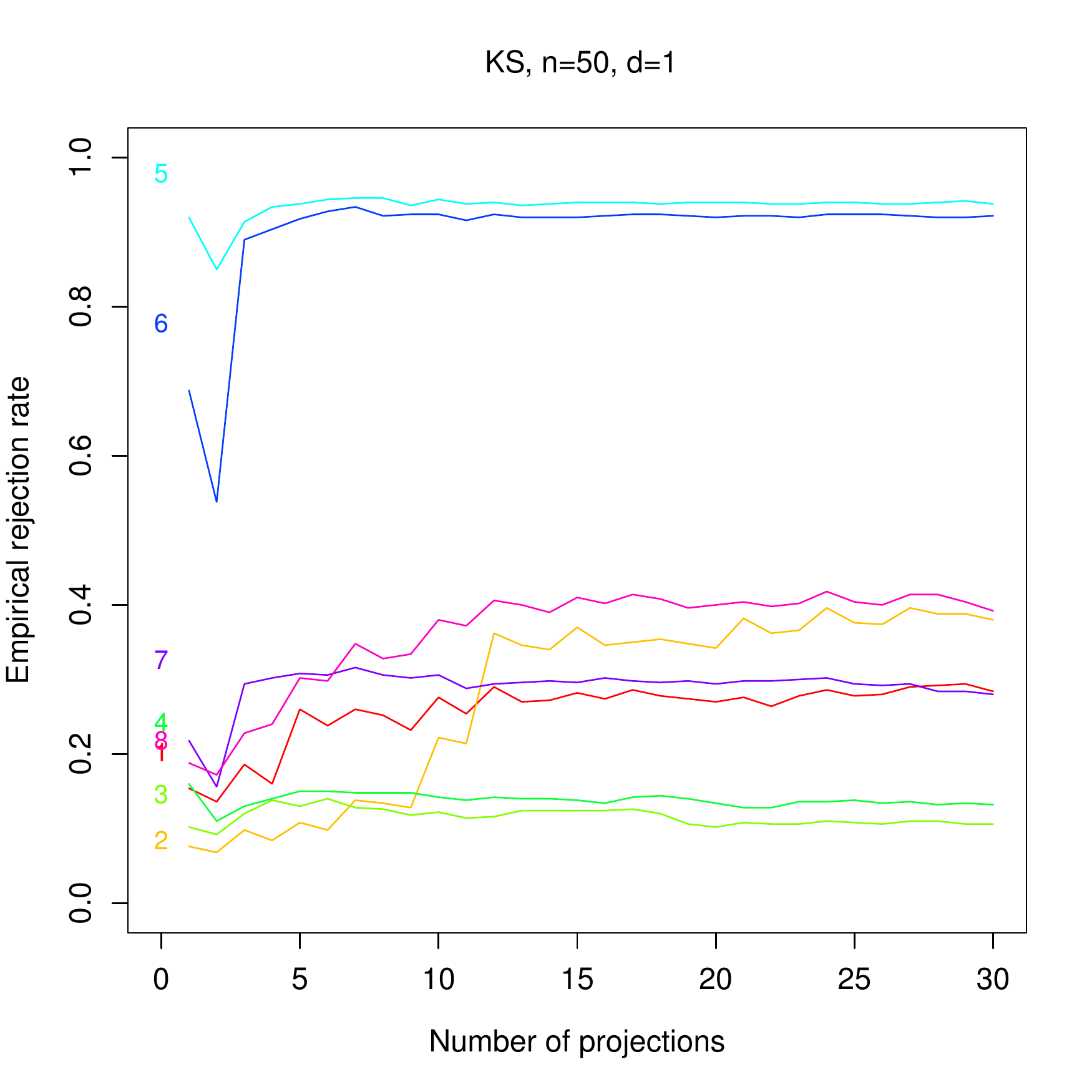}
\caption*{}
\end{minipage}
\hfill
\begin{minipage}[t]{0.3\linewidth}
\centering
\includegraphics[width=2.2in,height=2in]{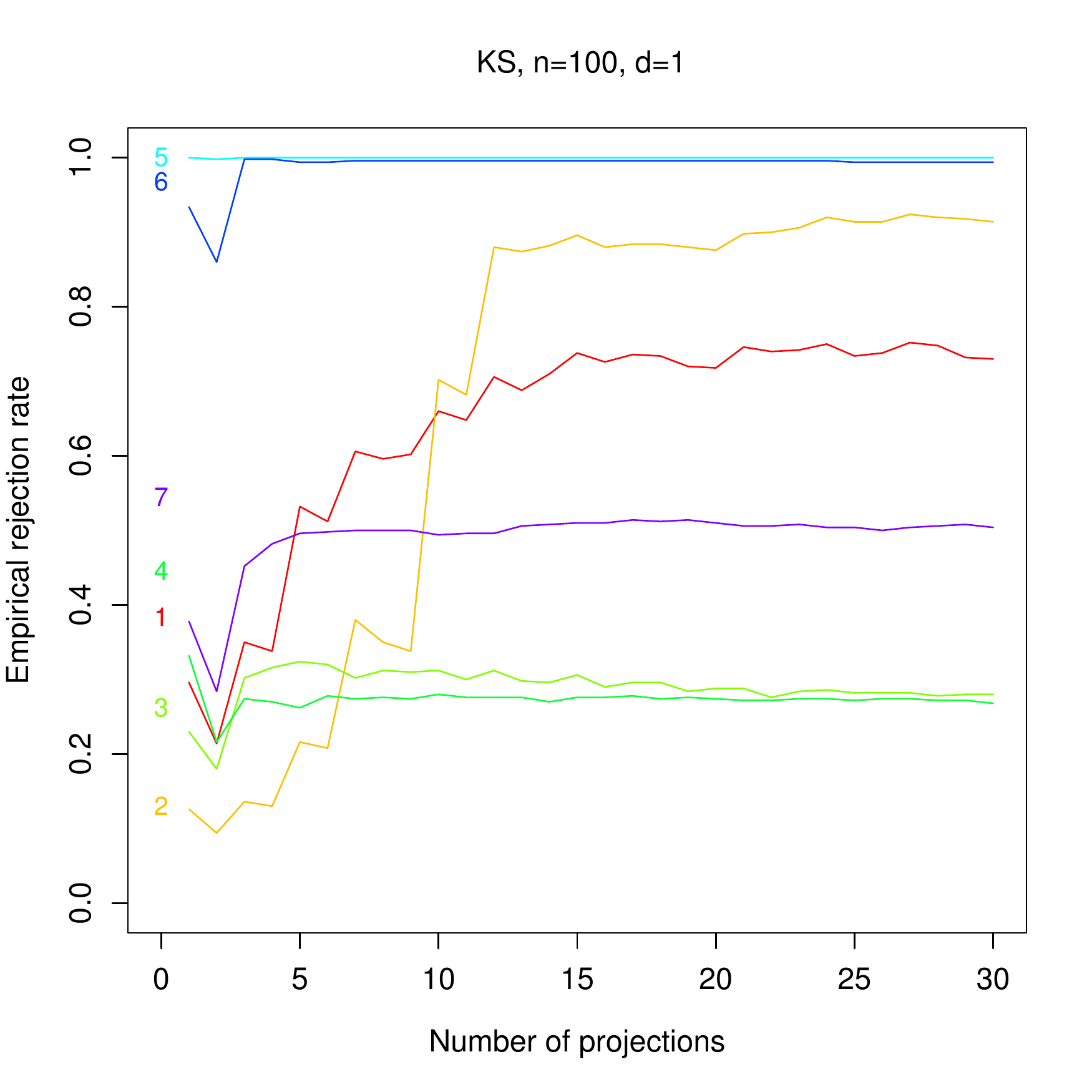}
\caption*{}
\end{minipage}
\hfill
\begin{minipage}[t]{0.3\linewidth}
\centering
\includegraphics[width=2.2in,height=2in]{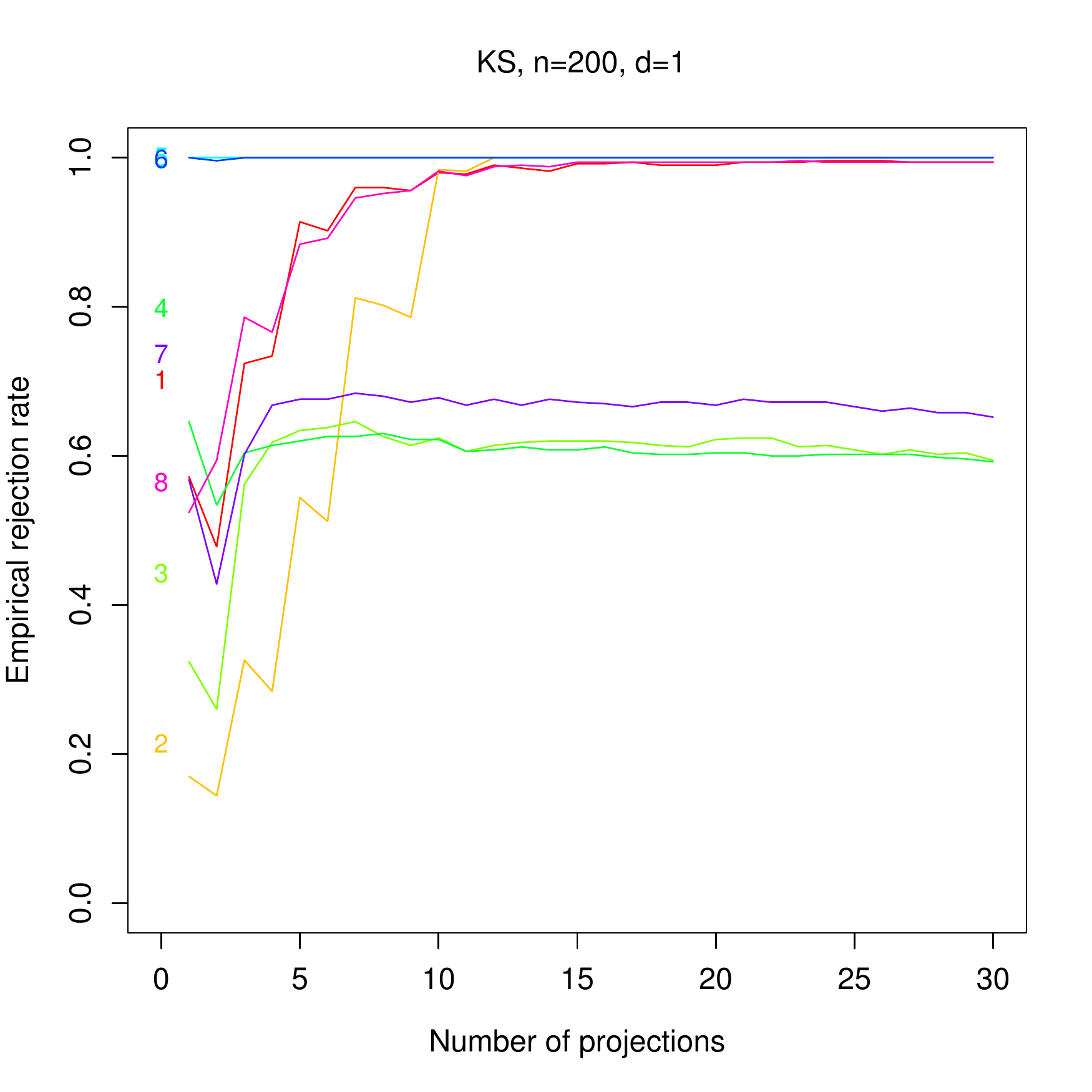}
\caption*{}
\end{minipage}
\hfill
\begin{minipage}[t]{0.3\linewidth}
\centering
\includegraphics[width=2in,height=2in]{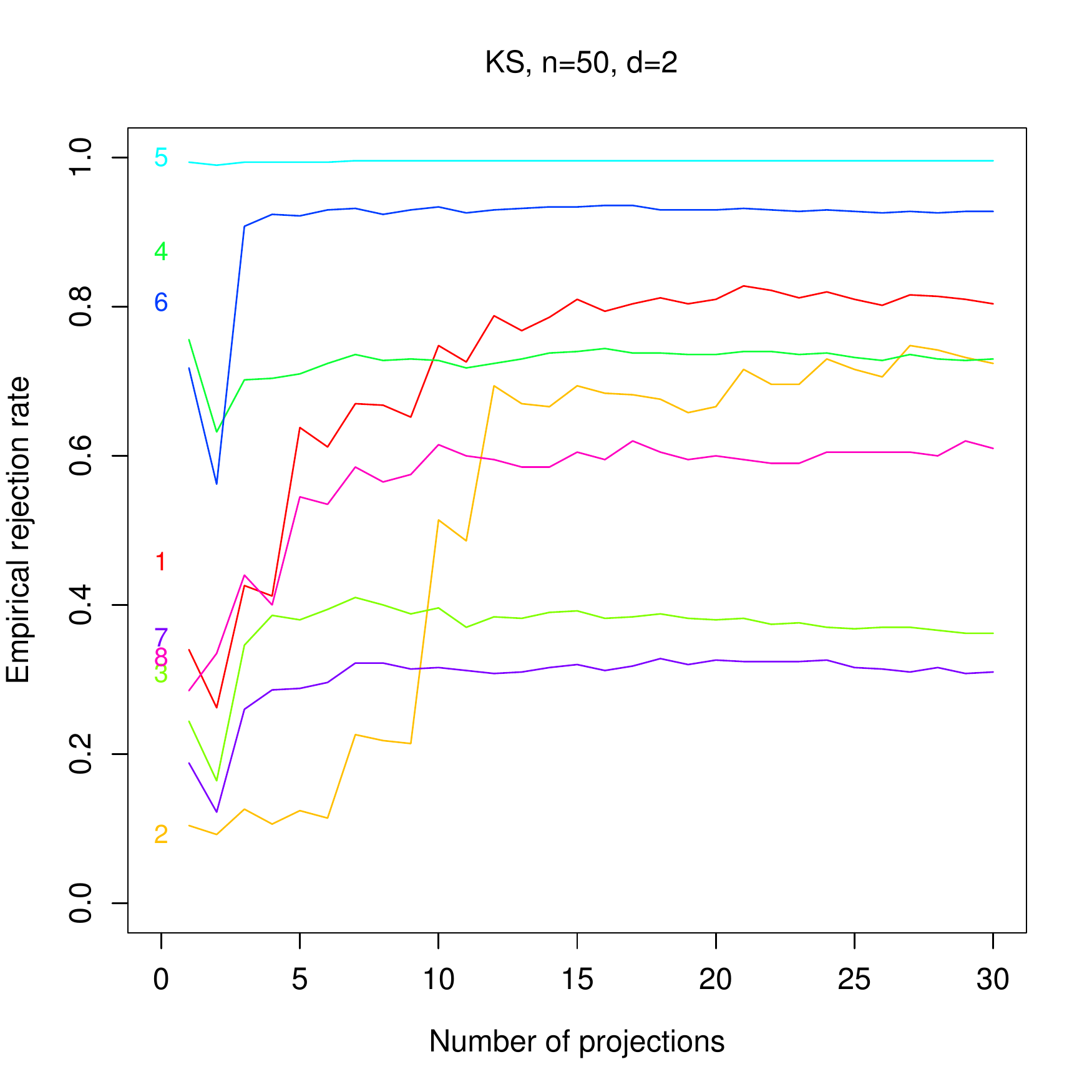}
\caption*{}
\end{minipage}
\hfill
\begin{minipage}[t]{0.3\linewidth}
\centering
\includegraphics[width=2.2in,height=2in]{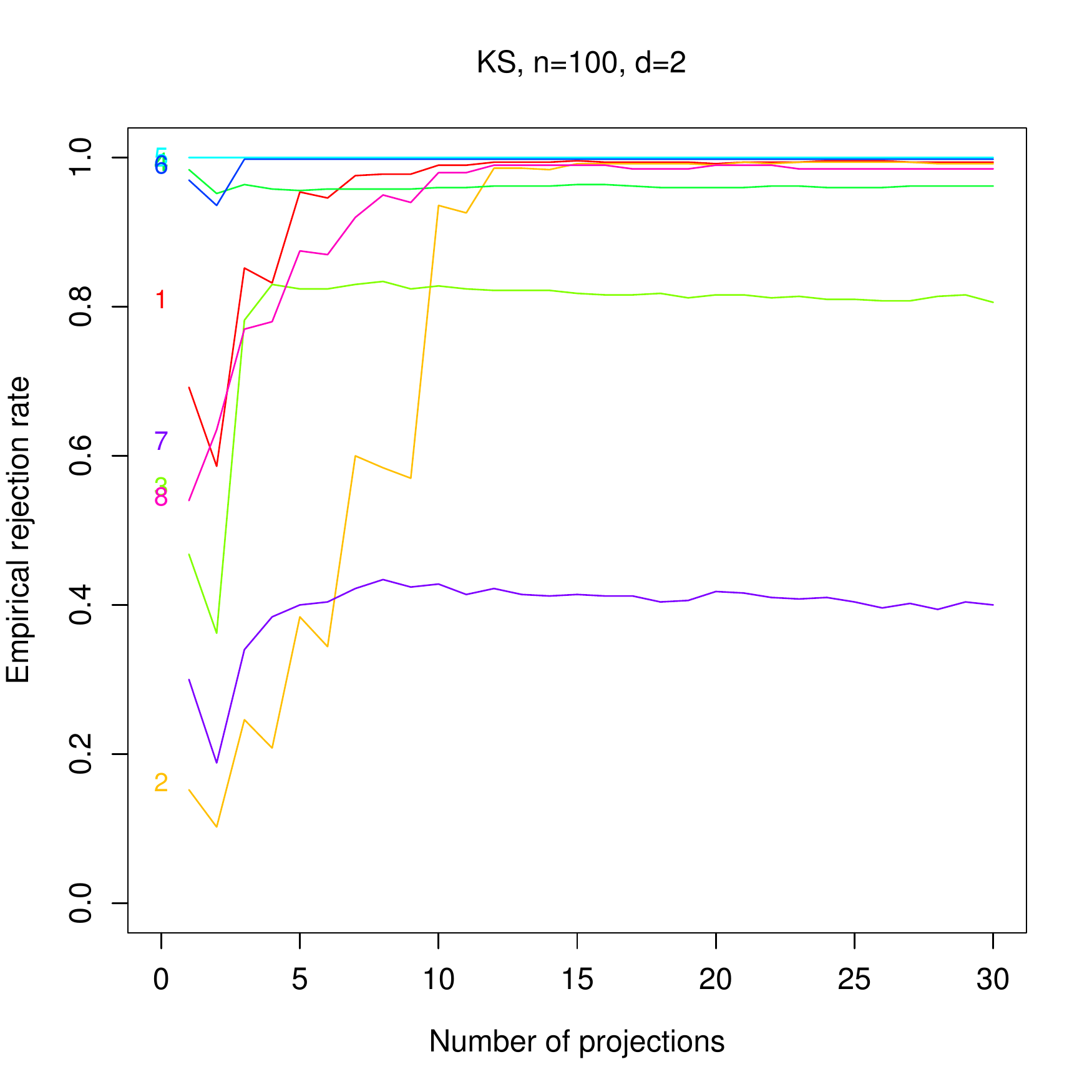}
\caption*{}
\end{minipage}
\hfill
\begin{minipage}[t]{0.3\linewidth}
\centering
\includegraphics[width=2.2in,height=2in]{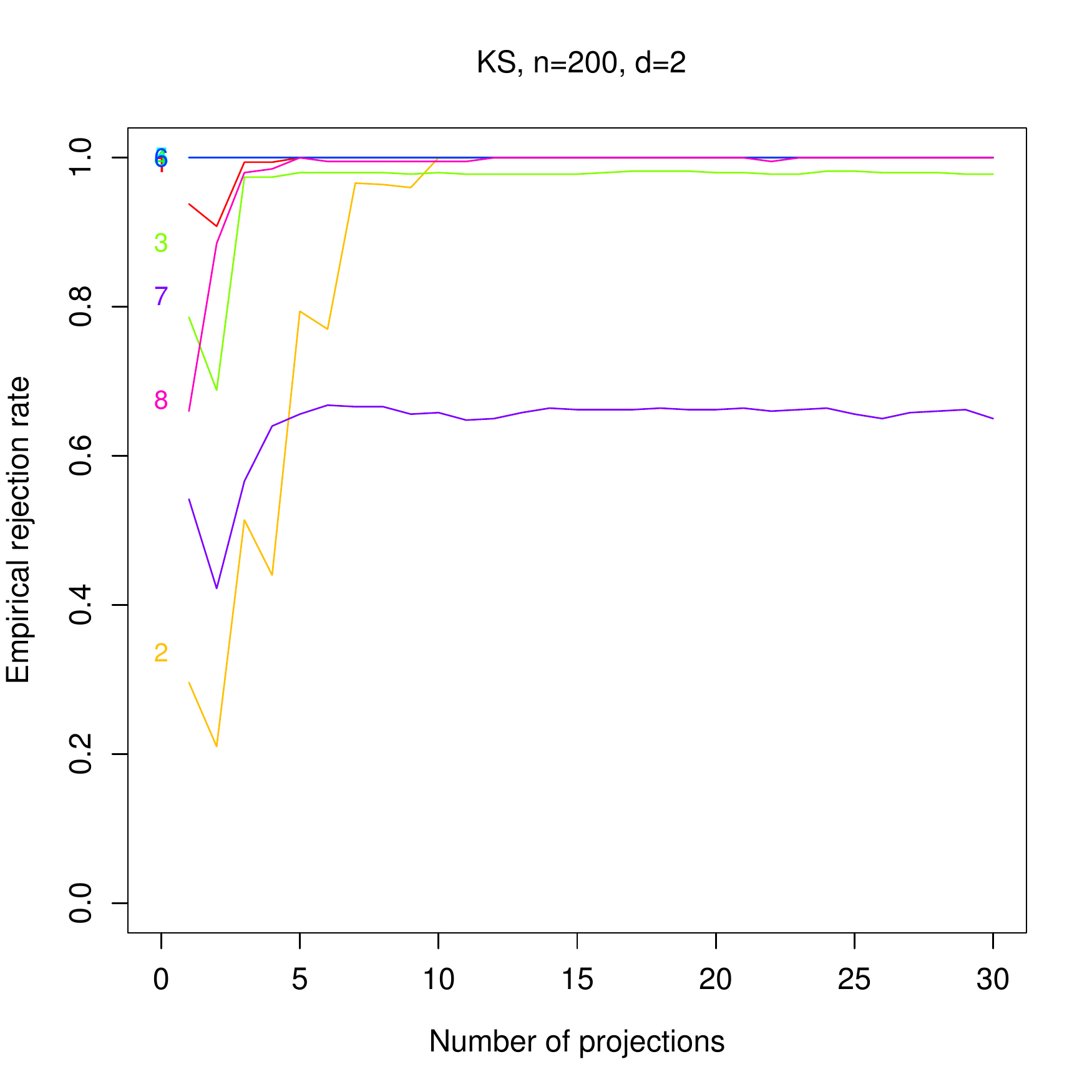}
\caption*{}
\end{minipage}
\hfill
\begin{minipage}[t]{0.3\linewidth}
\centering
\includegraphics[width=2.2in,height=2in]{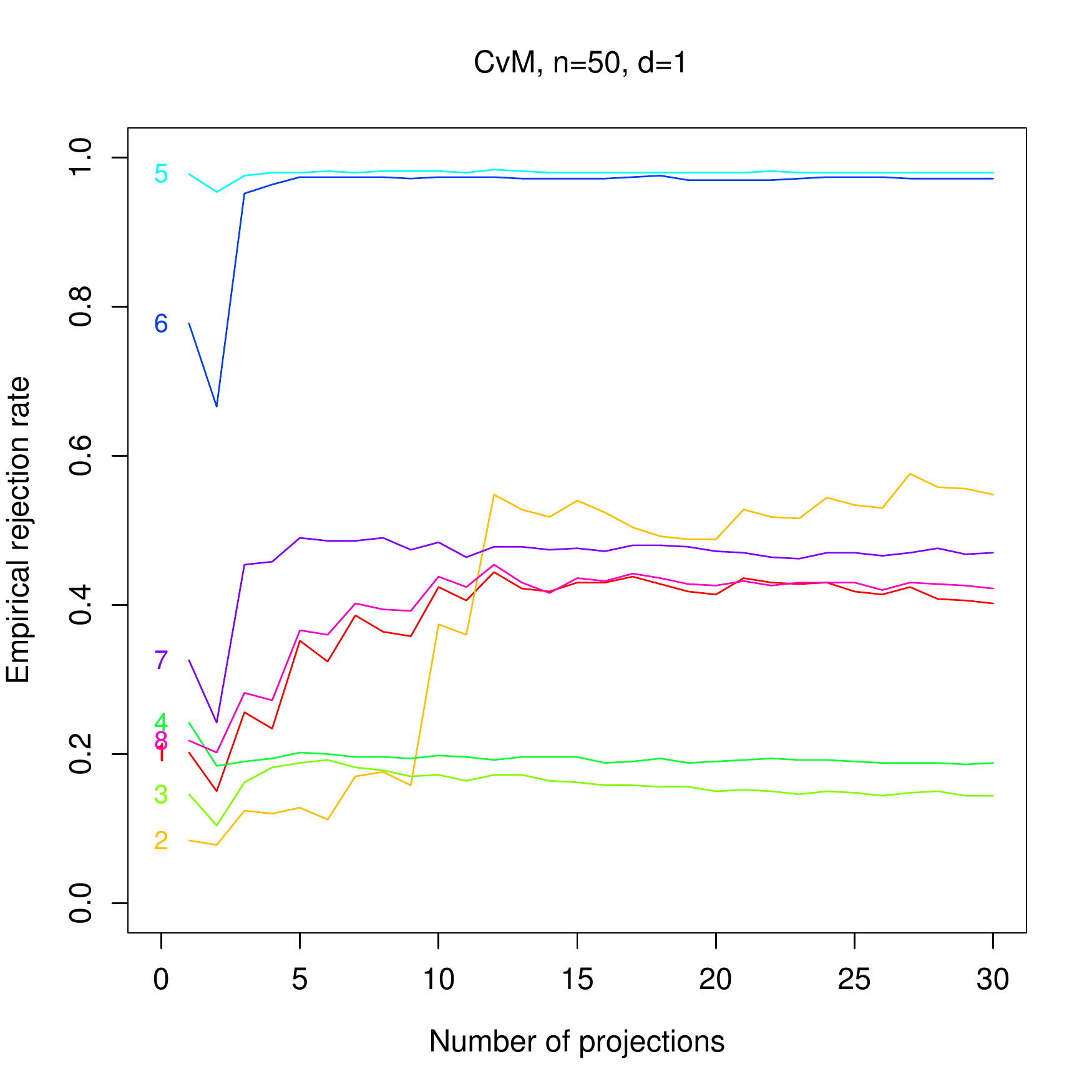}
\caption*{}
\end{minipage}
\hfill
\begin{minipage}[t]{0.3\linewidth}
\centering
\includegraphics[width=2.2in,height=2in]{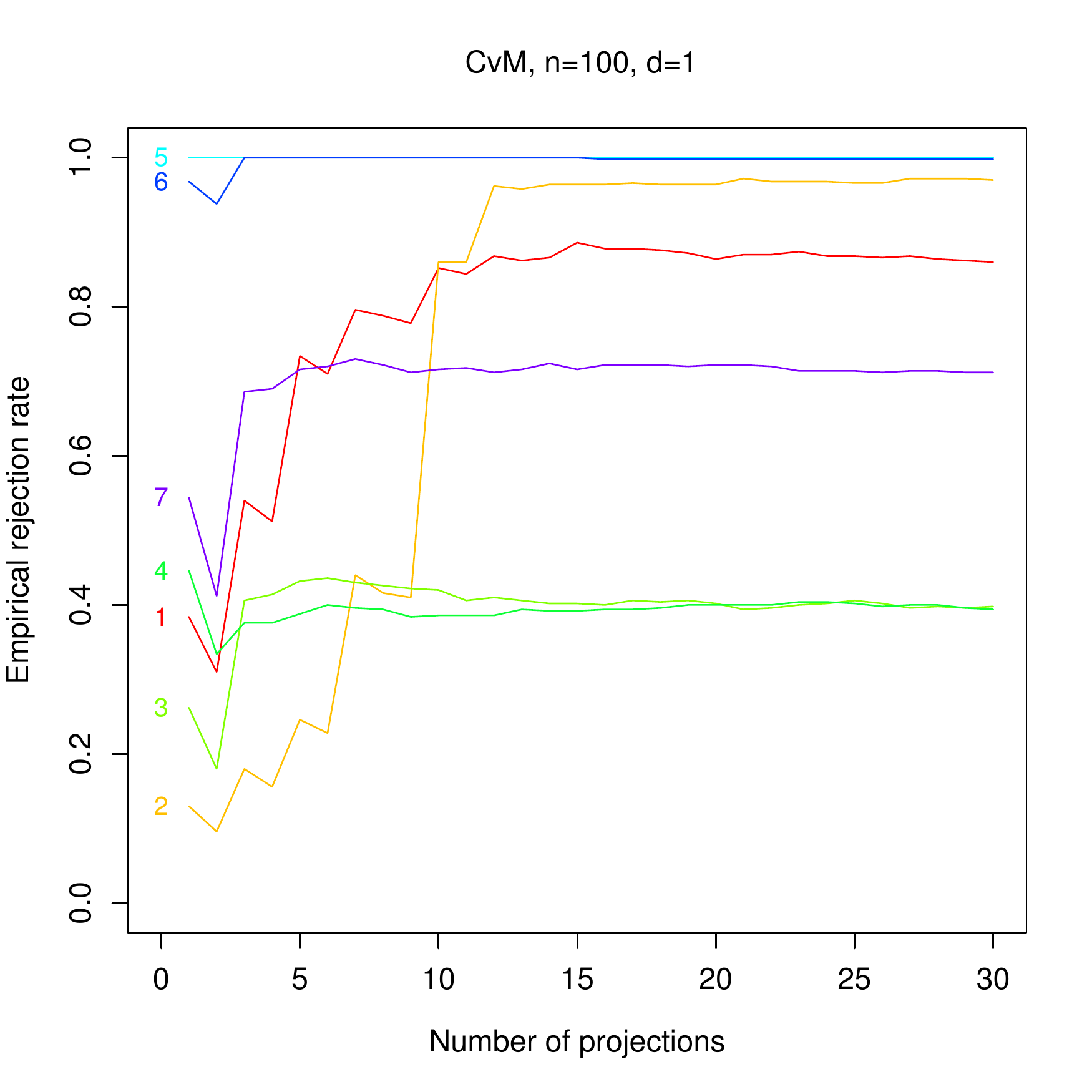}
\caption*{}
\end{minipage}
\hfill
\begin{minipage}[t]{0.3\linewidth}
\centering
\includegraphics[width=2.2in,height=2in]{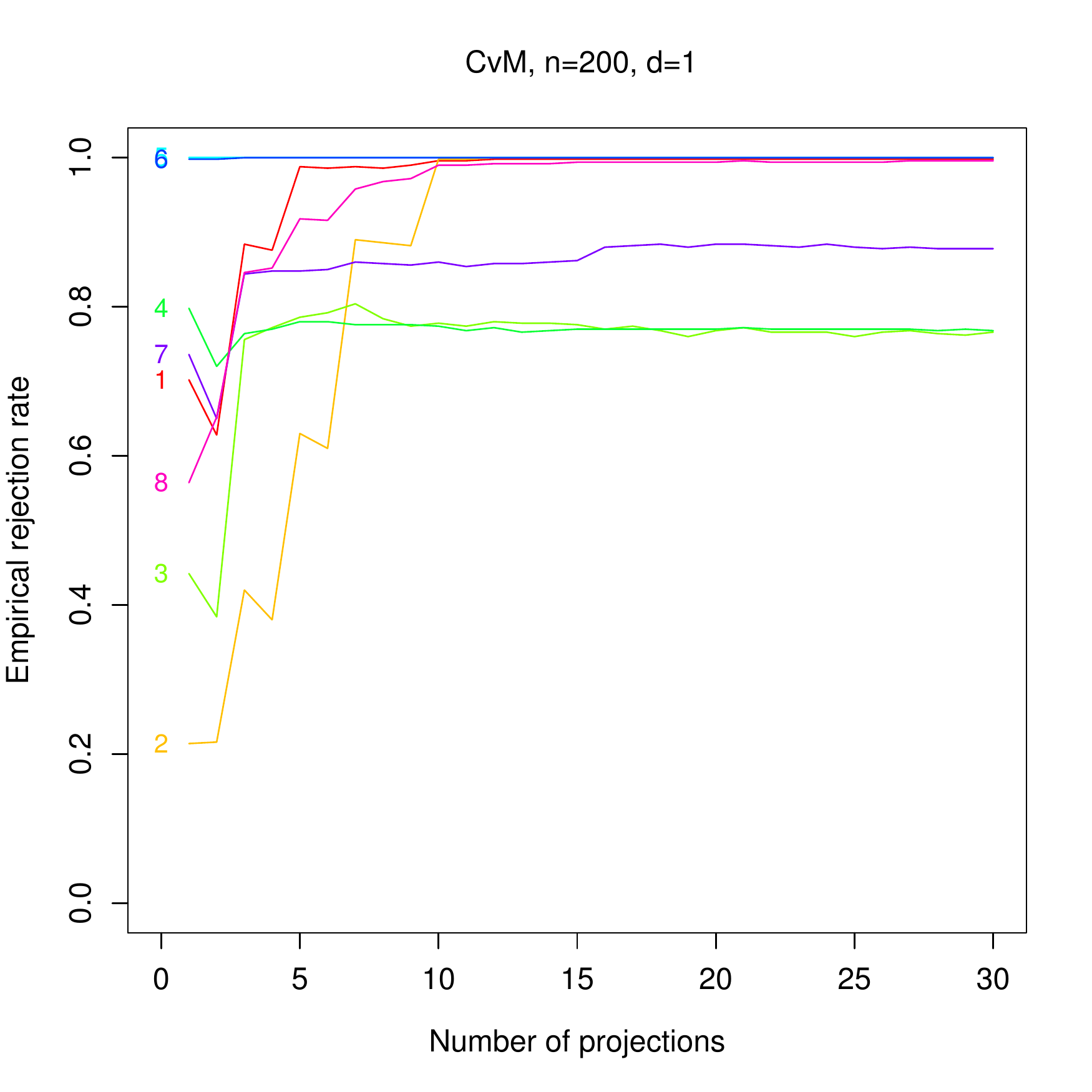}
\caption*{}
\end{minipage}
\begin{minipage}[t]{0.3\linewidth}
\centering
\includegraphics[width=2.2in,height=2in]{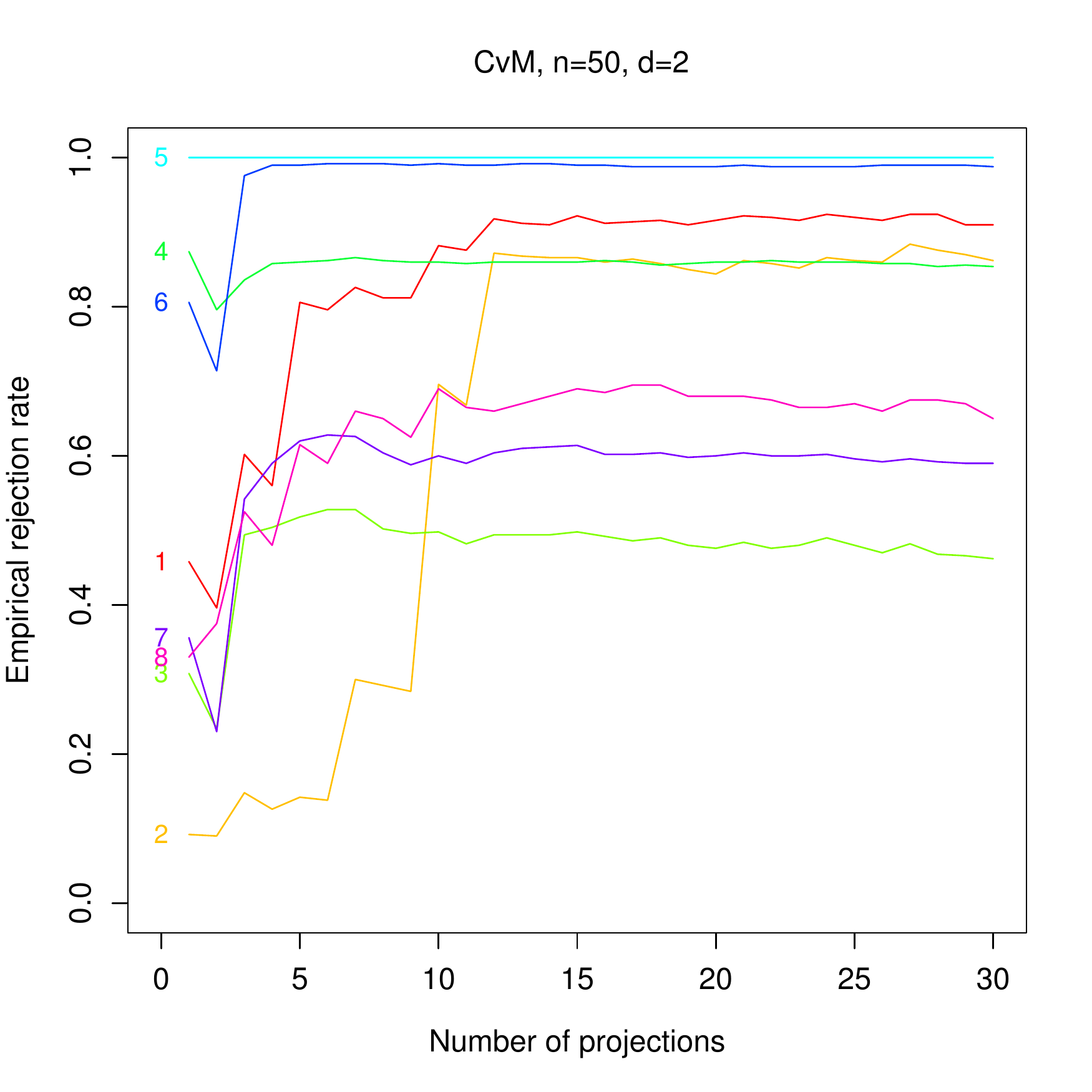}
\caption*{}
\end{minipage}
\hfill
\begin{minipage}[t]{0.3\linewidth}
\centering
\includegraphics[width=2.2in,height=2in]{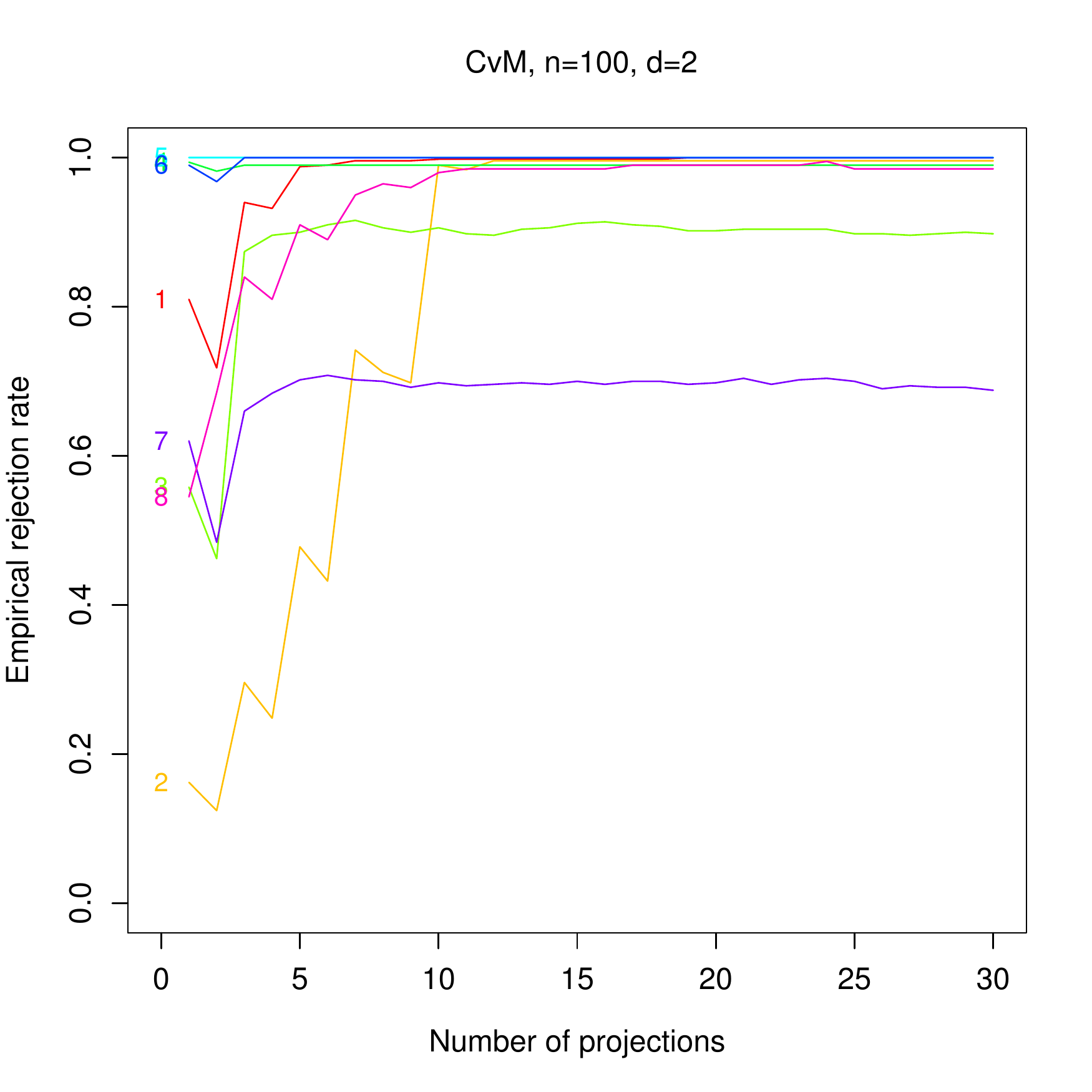}
\caption*{}
\end{minipage}
\hfill
\begin{minipage}[t]{0.3\linewidth}
\centering
\includegraphics[width=2.2in,height=2in]{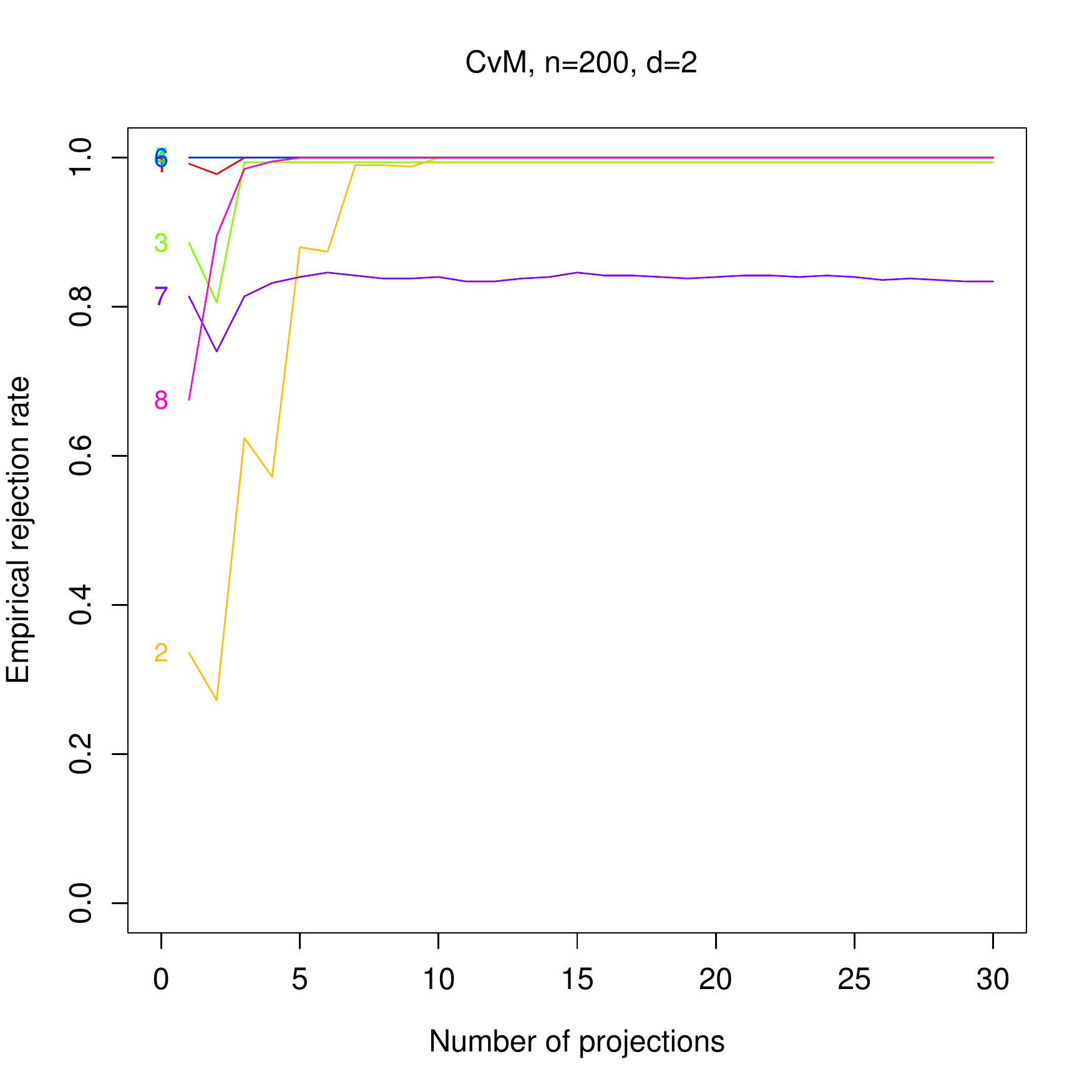}
\caption*{}
\end{minipage}
\vspace{-1cm}
\caption{Empirical powers of the CvM and KS tests with the number of projections from 1 to 30  in scenario S$k$, $k=1,\cdots,8$, based on
the significance level $\alpha=0.05$ and  the  sample sizes
$n=50$, $100$, and $200$, respectively. The first and third row correspond to the alternative $H_{k,1}$, while the second and fourth row for $H_{k,2}$, $k=1,\cdots,8$. }
\label{Kpower}
\end{figure}

\subsubsection{Dependence on the bandwidth }
\label{band}
A bandwidth $b$ is involved in the estimation of the parameter $\bbeta$, then affects the construction
of the test.  We set the bandwidth $b = cn^{-1/5}$ with $c$ being a
tuning constant.  In order to  explore the influence of $c$, we check
empirical sizes and powers using different bandwidths, with $c=2, 3, 4$ respectively, based on  $M=500$ Monte Carlo experiments and  $B=10000$ bootstrap samples   in each scenario.  
The results shown in Table  \ref{band size power}  imply  that the choice of $b$ has little  influence either on the size  or power of the test since
all three bandwidths have decent performance. In most scenarios, the bandwidth with  $c=3$ outperforms the other two in terms of  power,  except 
$H_{8,1}$ with $n=100$. As for empirical sizes, the bandwidth  with $c=2$ and $c=4$ provide values closer to the predetermined level for the sample 
size $n=50$.  As $n$ increases,  the size of the bandwidth  with $c=3$ improves dramatically, slightly better than others, especially for 
$H_{7,0}$ with $n=200$. In practice, we suggest $c=3$ as the default value.

Table \ref{size power} documents the empirical sizes and powers based on the number of projections $K=7$ and the bandwidth $b=3n^{-1/5}$.
It is shown that in all scenarios, the level is respected under the null hypothesis, with the approximation being better for a larger sample size.
%the empirical sizes of both CvM and KS tests become closer to the nominal significance level as the sample size increases. 
The power is increasing with sample size and tends towards 1 fast as the deviation increases, 
 which reveals a positive confirmation of the effectiveness of the bootstrap correction.

\begin{sidewaystable}[h!]
\caption{Empirical sizes and powers of the CvM and KS  tests based on different  bandwidth $b=cn^{-1/5}$: left of the parentheses  $c=2$, right of the parentheses    $c=4$; inside the parentheses $c=3$.   The significant level $\alpha$ is $0.05$, with sample size $n=50, 100, 200$ and the  number of projections $K=7$.}
	\label{band size power}
	\begin{center}
		\begin{tabular}{ccccccc}
			\hline
			& \multicolumn{2}{c}{$n =50$} & \multicolumn{2}{c}{$n=100$} & \multicolumn{2}{c}{$n=200$} \\
			\cmidrule(lr){2-3} \cmidrule(lr){4-5} \cmidrule(lr){6-7}
			$H_{k,\delta}$ & CvM & KS &           CvM& KS     &  CvM    & KS \\ \hline
			$H_{1,0}$                     &0.066 (0.056) 0.056       & 0.058 (0.056) 0.064       & 0.050 (0.044) 0.050        & 0.068 (0.062) 0.044       & 0.038 (0.047) 0.066       & 0.068 (0.050) 0.082       \\ 
			$H_{2,0}$                     & 0.054 (0.097) 0.034      & 0.064 (0.090) 0.050      & 0.052 (0.062) 0.074        & 0.060 (0.068) 0.082      & 0.058 (0.060) 0.068        & 0.056 (0.056) 0.072      \\ 
			$H_{3,0}$                     & 0.050 (0.045) 0.046       & 0.050 (0.044) 0.054     & 0.042 (0.048) 0.064       & 0.044 (0.065) 0.080       & 0.030 (0.049) 0.040       & 0.050 (0.054) 0.038      \\ 
			$H_{4,0}$                     & 0.038 (0.050) 0.042       & 0.044 (0.054) 0.048     & 0.030 (0.041) 0.036       & 0.040 (0.039) 0.036      & 0.036 (0.038) 0.048         & 0.036 (0.037) 0.048       \\ 
						$H_{5,0}$                     & 0.038 (0.032) 0.036       & 0.042 (0.040) 0.032      & 0.030 (0.026) 0.028       & 0.032 (0.029) 0.034       & 0.042 (0.043) 0.040        & 0.042 (0.050) 0.030        \\ 
			$H_{6,0}$                     & 0.040 (0.064) 0.058       & 0.042 (0.070) 0.068      & 0.040 (0.049) 0.042       & 0.046 (0.058) 0.054      & 0.046 (0.049) 0.054       & 0.050 (0.051) 0.046       \\
			$H_{7,0}$                     & 0.046 (0.036) 0.038       & 0.052 (0.046) 0.046      & 0.016 (0.051) 0.032       & 0.034 (0.060) 0.046       & 0.028 (0.047) 0.038       & 0.026 (0.054) 0.058      \\ 
			$H_{8,0}$                     & 0.078 (0.087) 0.070       & 0.066 (0.095) 0.080      & 0.082 (0.072) 0.064        & 0.078 (0.076) 0.070      & 0.074 (0.055) 0.066       & 0.068 (0.065) 0.074       \\ 
			&        &     &       &      &        &  \\
			$H_{1,1}$                     & 0.284 (0.386) 0.210      & 0.194 (0.260) 0.178      & 0.536 (0.796) 0.516       &  0.412 (0.606) 0.400      & 0.926 (0.988) 0.932       & 0.818 (0.960) 0.804       \\ 
			$H_{2,1}$                     & 0.136 (0.170) 0.120       & 0.120 (0.138) 0.106     & 0.314 (0.440) 0.286        & 0.260 (0.380) 0.252       & 0.688 (0.890) 0.650        & 0.572 (0.812) 0.528      \\ 
			$H_{3,1}$                     & 0.176 (0.182) 0.178      & 0.136 (0.128) 0.142     & 0.370 (0.430) 0.366        & 0.276 (0.302) 0.264      & 0.716 (0.804) 0.718       & 0.580 (0.646) 0.560      \\ 
			$H_{4,1}$                     & 0.230 (0.196) 0.020      & 0.172 (0.148) 0.158     & 0.438 (0.396) 0.444       & 0.312 (0.274) 0.302      & 0.754 (0.776) 0.788        & 0.586 (0.626) 0.598      \\ 
			$H_{5,1}$                     & 0.968 (0.980) 0.986        & 0.918 (0.946) 0.936     & 1.000 (1.000) 1.000           & 1.000 (1.000) 1.000          & 1.000 (1.000) 1.000           & 1.000 (1.000) 1.000          \\ 
			$H_{6,1}$                     & 0.966 (0.974) 0.952       & 0.926 (0.934) 0.890     & 1.000 (1.000) 0.996           & 0.998 (0.996) 0.992      & 1.000 (1.000) 1.000           & 1.000 (1.000) 1.000          \\ 
			$H_{7,1}$                     & 0.446 (0.486) 0.422      & 0.286 (0.316) 0.254      & 0.660 (0.730) 0.678        & 0.450 (0.500) 0.478        & 0.840 (0.860) 0.852        & 0.690 (0.684) 0.678      \\ 
			$H_{8,1}$                     & 0.212 (0.402) 0.204      & 0.204 (0.348) 0.198      & 0.376 (0.564) 0.384       & 0.620 (0.492) 0.352      & 0.668 (0.958) 0.646       & 0.636 (0.946) 0.610      \\ 
			&        &     &       &      &        &  \\
			$H_{1,2}$                     & 0.626 (0.826) 0.552       & 0.478 (0.670) 0.394      & 0.970 (0.996) 0.954       & 0.896 (0.976) 0.896      & 1.000 (1.000) 1.000           & 1.000 (1.000) 0.998          \\
			$H_{2,2}$                     & 0.180 (0.300) 0.166        & 0.172 (0.226) 0.144     & 0.476 (0.742) 0.490       & 0.406 (0.600) 0.354        & 0.866 (0.990) 0.874        & 0.770 (0.966) 0.774      \\ 
			$H_{3,2}$                     & 0.530 (0.528) 0.526      & 0.378 (0.410) 0.396     & 0.870 (0.916) 0.886       & 0.756 (0.830) 0.748       & 0.998 (0.994) 0.996       & 0.992 (0.980) 0.994       \\ 
			$H_{4,2}$                     & 0.816 (0.866) 0.844      & 0.720 (0.736) 0.702     & 0.986 (0.990) 0.988        & 0.970 (0.958) 0.966      & 1.000 (1.000) 0.998           & 1.000 (1.000) 0.998          \\ 
			$H_{5,2}$                     & 1.000 (1.000) 1.000          & 0.996 (0.996) 1.000     & 1.000 (1.000) 1.000           & 1.000 (1.000) 1.000          & 1.000 (1.000) 1.000           & 1.000 (1.000) 1.000           \\ 
			$H_{6,2}$                     & 0.968 (0.992) 0.972      & 0.922 (0.932) 0.940     & 0.996 (1.000) 0.998           & 0.994 (0.998) 0.998      & 1.000 (1.000) 1.000           & 1.000 (1.000) 1.000          \\ 
			$H_{7,2}$                     & 0.570 (0.626) 0.564      & 0.262 (0.322) 0.304     & 0.742 (0.702) 0.740       & 0.422 (0.422) 0.460      & 0.820 (0.842) 0.836       & 0.610 (0.666) 0.608      \\ 
			$H_{8,2}$                     & 0.468 (0.660) 0.466       & 0.430 (0.585) 0.430     & 0.712 (0.950) 0.716        & 0.672 (0.920) 0.694      & 0.898 (1.000) 0.906           & 0.882 (0.995) 0.892      \\ \hline 
		\end{tabular}
	\end{center}
\end{sidewaystable}

\begin{table}[h!]
\caption{Empirical sizes and powers of the CvM and KS  tests based on the significant level $\alpha=0.05$, with sample sizes $n=50, 100, 200$, the  number of projections $K=7$ and the bandwidth $b=3n^{-1/5}$.}
		\label{size power}
	\begin{center}
		\begin{tabular}{ccccccc}
		\hline
		& \multicolumn{2}{c}{$n =50$} & \multicolumn{2}{c}{$n=100$} & \multicolumn{2}{c}{$n=200$} \\
\cmidrule(lr){2-3} \cmidrule(lr){4-5} \cmidrule(lr){6-7}
			$H_{k,\delta}$ & CvM & KS &           CvM& KS     &  CvM    & KS \\ \hline
			$H_{1,0}$                     & 0.056       & 0.056      & 0.044        & 0.062       & 0.047        & 0.050        \\ 
			$H_{2,0}$                     & 0.097       & 0.090       & 0.062        & 0.068       & 0.060         & 0.056       \\ 
			$H_{3,0}$                     & 0.045       & 0.044      & 0.048        & 0.065       & 0.049        & 0.054       \\ 
			$H_{4,0}$                     & 0.050        & 0.054      & 0.041        & 0.039       & 0.038        & 0.037       \\ 
			$H_{5,0}$                     & 0.032       & 0.040       & 0.026        & 0.029       & 0.043        & 0.050        \\ 
			$H_{6,0}$                     & 0.064       & 0.070       & 0.049        & 0.058       & 0.049        & 0.051       \\
			$H_{7,0}$                     & 0.036       & 0.046      & 0.051        & 0.060        & 0.047        & 0.054       \\ 
			$H_{8,0}$                     & 0.087       & 0.095      & 0.072        & 0.076       & 0.055        & 0.065       \\ 
			                     &        &     &       &      &        &  \\
			$H_{1,1}$                     & 0.386       & 0.260       & 0.796        & 0.606       & 0.988        & 0.960        \\ 
			$H_{2,1}$                     & 0.170        & 0.138      & 0.440         & 0.380        & 0.890         & 0.812       \\ 
			$H_{3,1}$                     & 0.182       & 0.128      & 0.430         & 0.302       & 0.804        & 0.646       \\ 
			$H_{4,1}$                     & 0.196       & 0.148      & 0.396        & 0.274       & 0.776        & 0.626       \\ 
			$H_{5,1}$                     & 0.980        & 0.946      & 1.000            & 1.000           & 1.000            & 1.000           \\ 
			$H_{6,1}$                     & 0.974       & 0.934      & 1.000            & 0.996       & 1.000            & 1.000           \\ 
			$H_{7,1}$                     & 0.486       & 0.316      & 0.730         & 0.500         & 0.860         & 0.684       \\ 
			$H_{8,1}$                     & 0.402       & 0.348      & 0.564        & 0.492       & 0.958        & 0.946       \\ 
			   &        &     &       &      &        &  \\
			$H_{1,2}$                     & 0.826       & 0.670       & 0.996        & 0.976       & 1.000            & 1.000           \\
			$H_{2,2}$                     & 0.300         & 0.226      & 0.742        & 0.600         & 0.990         & 0.966       \\ 
			$H_{3,2}$                     & 0.528       & 0.410      & 0.916        & 0.830        & 0.994        & 0.980        \\ 
			$H_{4,2}$                     & 0.866       & 0.736      & 0.990         & 0.958       & 1.000            & 1.000           \\ 
			$H_{5,2}$                     & 1.000           & 0.996      & 1.000            & 1.000           & 1.000            & 1.000           \\ 
			$H_{6,2}$                     & 0.992       & 0.932      & 1.000            & 0.998       & 1.000            & 1.000           \\ 
			$H_{7,2}$                     & 0.626       & 0.322      & 0.702        & 0.422       & 0.842        & 0.666       \\ 
			$H_{8,2}$                     & 0.660        & 0.585      & 0.950         & 0.920       & 1.000            & 0.995       \\ \hline 
		\end{tabular}
		\end{center}
	\end{table}

\subsubsection{Power against the local alternatives}
Instead of taking a fixed deviation, we let deviation change w.r.t. sample size 
$n$ for local alternative detection. Scenarios $1, 2, 3, 8$ in the previous subsection are considered,   with a  series of new deviation coefficients   $\gamma_{n} = n^{-1/2}\sqrt{50}\delta_1$, with $\delta_1$ for S1, S2, S3, and S8 being $2/5$, $5/2$, $-1$ and $5/2$, respectively. For   scenario S$k$, $k=1,2,3,8$, the
	data is generated from
	\begin{equation*}
		Y=\bZ_k^{\top}\bbeta_k+\left\langle \bX, \brho_k\right\rangle+\gamma_n \triangle_{{\btheta}_k}\left(\bX\right)+\varepsilon.
	\end{equation*}

Figure \ref{local} gives the empirical power for a Pitman local alternative that departs from the null at a rate of $n^{-1/2}$
at the level $\alpha=0.05$ with the sample size $n=50, 100, 200, 300, 500$. We observe that the tests have moderate power, and the power improves or decreases  slightly  as $n$ increases,  but is always large than 0.2,  
which is consistent with Theorem  \ref{theorem3}.

\begin{figure}[h]
\begin{minipage}[t]{0.5\linewidth}
\centering
\includegraphics[width=3in,height=3in]{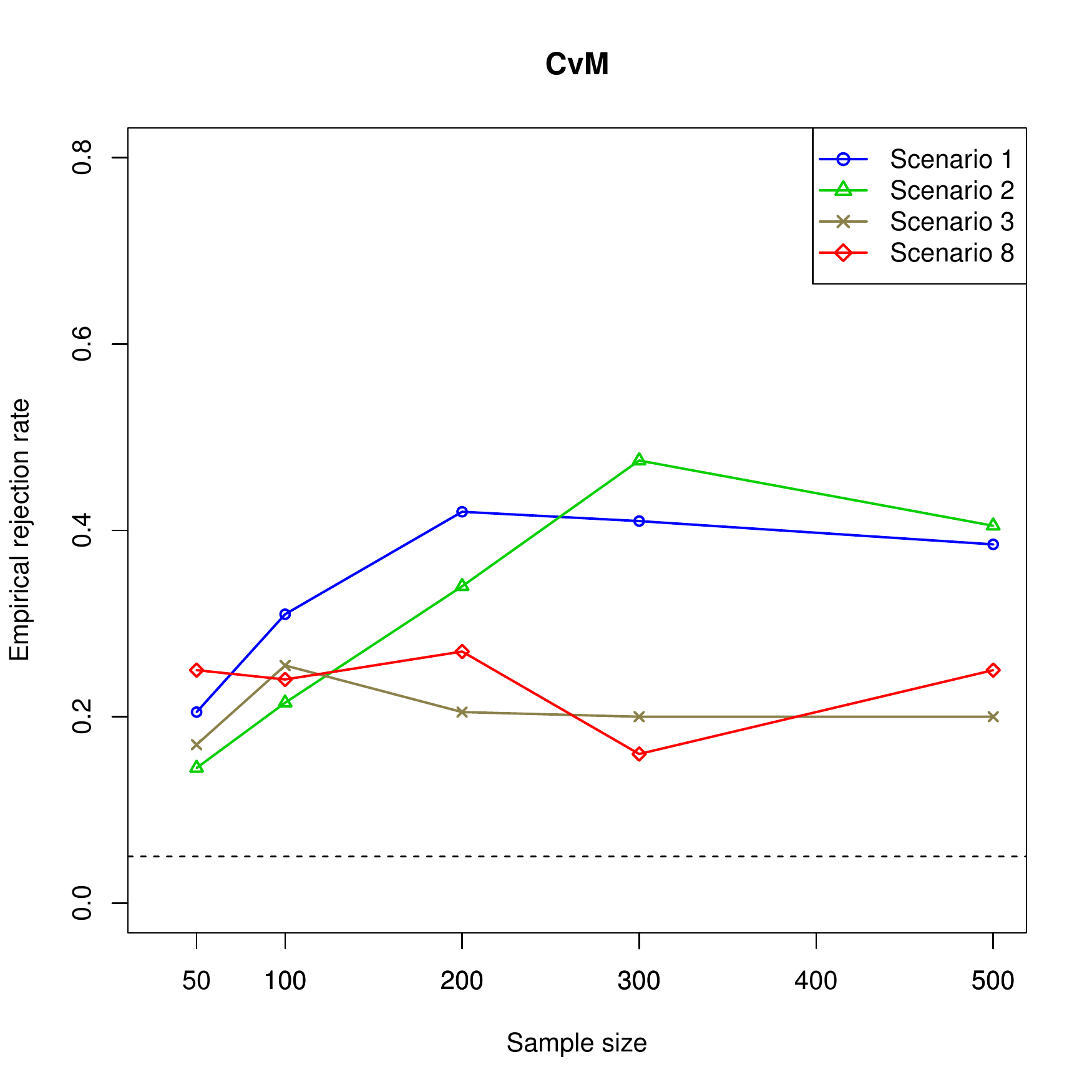}
\caption*{}
\end{minipage}
\hfill
\begin{minipage}[t]{0.5\linewidth}
\centering
\includegraphics[width=3in,height=3in]{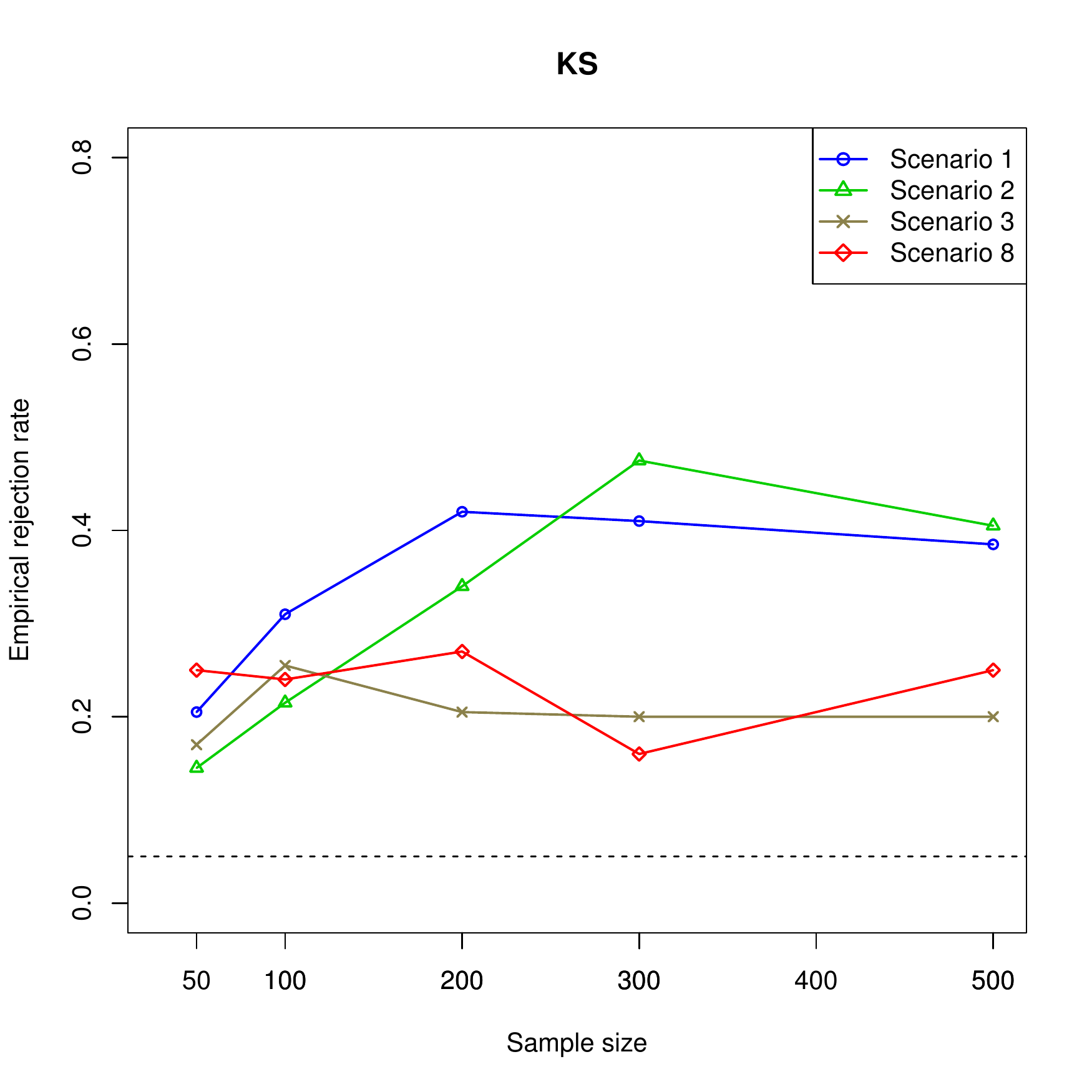}
\caption*{}
\end{minipage}
\caption{ Empirical powers for a Pitman local alternative in scenario S$k$, $k=1,2,3,8$, 
at the level $\alpha=0.05$ (dashed line) with the sample sizes $n=50, 100, 200, 300, 500$.}
\label{local}
\end{figure}

%Next,  the effectiveness of the bootstrap correction
	
	\subsection{Real data analysis}
	The proposed tests are further illustrated with two real  data sets  containing  mixed-type covariates.
	The first one is the classical Tecator data set with $215$ finely chopped pure meat samples, which is available in the R package \texttt{fda.usc}.
	Each  sample includes the fat, water, and protein content of the meat, as well as  a curve, consisting of $100$ channel points of spectrum absorbance in the wavelength range of 850nm-1050nm. These spectra  can naturally be regarded as functional data since they are densely recorded  at $100$ channel points and seem to be quite smooth, see Figure \ref{data} (a).
	The aim of the study is to predict the fat content $Y$ of a meat  sample based on its water content $Z_1$,  protein content $Z_2$ and the  near-infrared absorbance spectrum $\bX$, hence the SPFLM is chosen as a candidate model. The null hypothesis of interest is:
	\begin{equation*} 
		H_0: \quad Y  = \beta_1Z_1+ \beta_2Z_2 +\left\langle  \bX,\brho \right\rangle +\varepsilon, \quad \text{for some}~  \brho\in \mathcal{H}.	
	\end{equation*}
	We apply our proposed  method to the above test with  $B=10000$   bootstrap replications.
	%The $p$-value is 0.494 for CvM and  0.378 for 
	Table \ref{tab:tector_test} presents the $p$-values at  the different numbers of projections.
	We conclude from the table that the null hypothesis 
	should be retained under the significance level $\alpha=0.05$. This suggests strongly that there is a
	significant linear relationship between the fat content and spectrum absorbance curves, contrary to the conclusion in \citet{cuesta2019goodness}.
	The reason is that our goodness-of-fit tests  are based on the SFPLR with two  added scalar covariates, namely, water and  protein content  of the meat, while theirs are on the FLM. The difference  leads to inverse results,  as well as reveals the great value  of the test of SPFLR, which is more comprehensive and enjoys broader applications.

\begin{figure}[h]
\begin{minipage}[t]{0.5\linewidth}
\centering
\includegraphics[width=3in,height=3in]{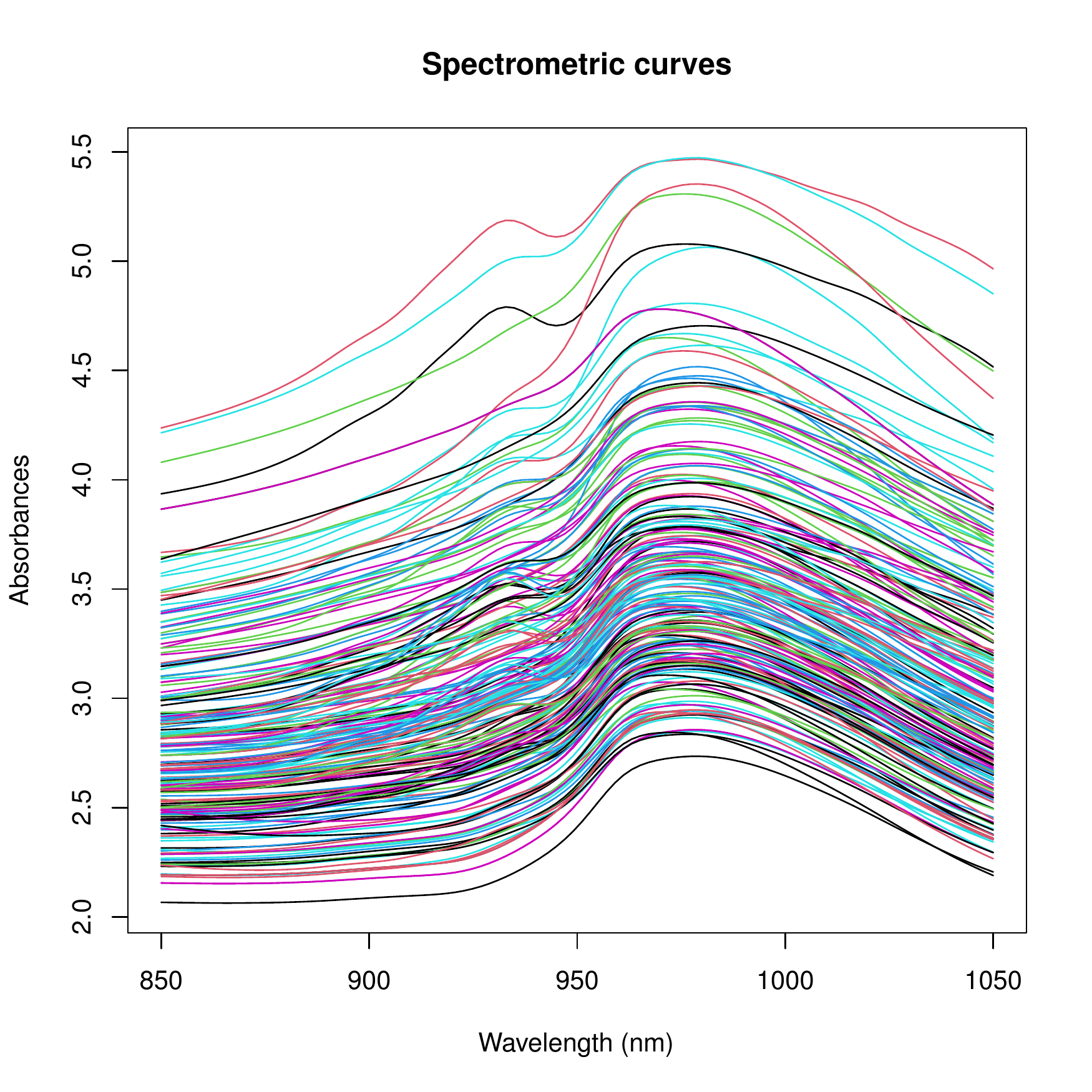}
\caption*{(a)}
\end{minipage}
\hfill
\begin{minipage}[t]{0.5\linewidth}
\centering
\includegraphics[width=3in,height=3in]{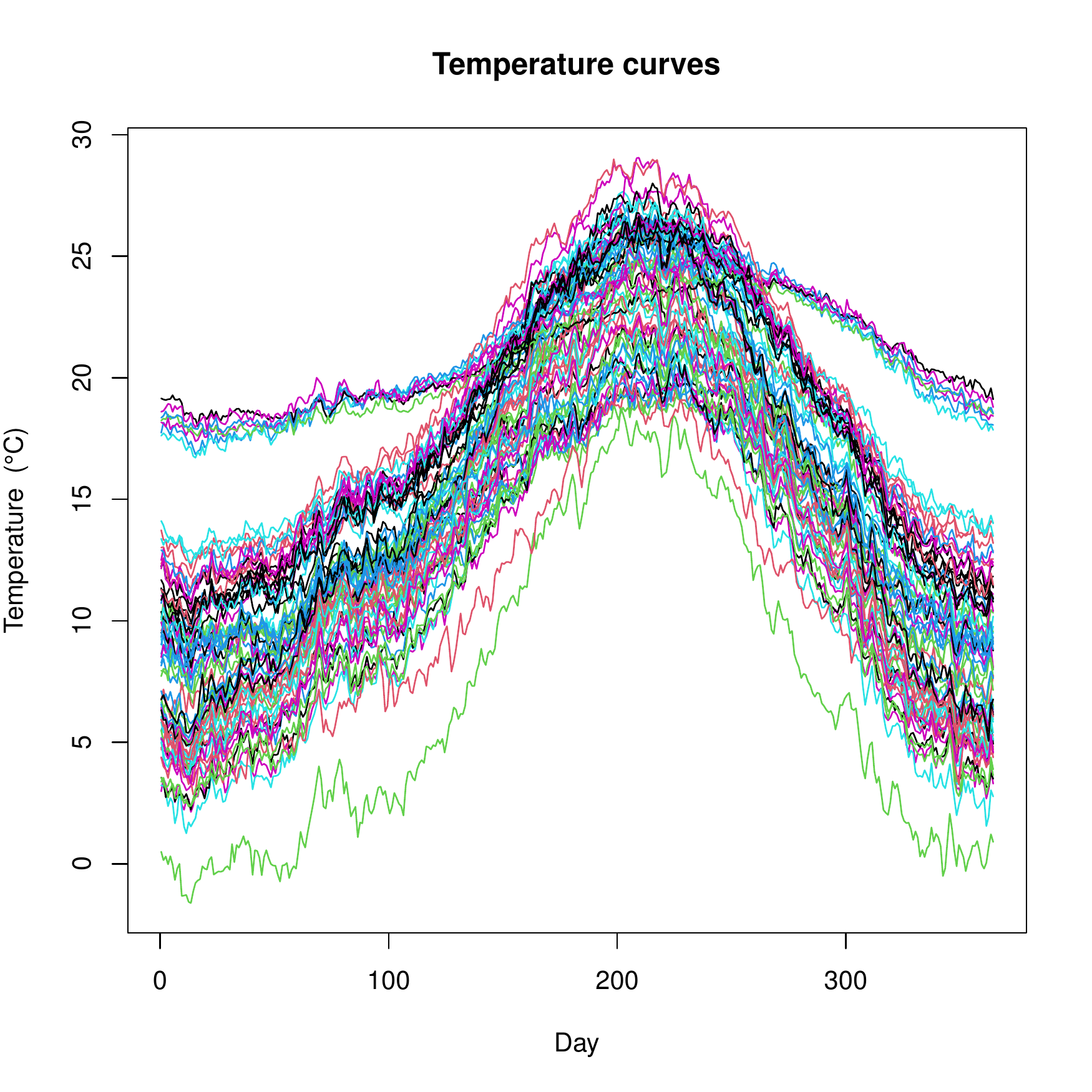}
\caption*{(b)}
\end{minipage}
\caption{(a) Plot of  spectrometric curves in the Tecator data set; (b)Daily mean temperature for the 73 Spanish weather stations.}
\label{data}
\end{figure}

\begin{table}[htbp]
  \centering
  \caption{The $p$-value of  CvM and KS  tests  with different numbers of projections in Tecator data set.}
    \begin{tabular}{cccccc}
    \hline
          & $K=2$   & $K=4$   & $K=7$   & $K=10$  & $K=12$ \\
        \hline   
    CvM   & $0.505$  & $0.626$  & $0.589$  & $0.561$  & $0.551$  \\
    KS    & $0.312$  & $0.406$  & $0.401$  & $0.394$  & $0.387$  \\
     \hline
    \end{tabular}%
  \label{tab:tector_test}%
\end{table}%	

The second example is the AEMET data set  in the R package \texttt{fda.usc},
which consists of  a daily temperature of $73$ Spanish weather stations 
during the period 1980-2009 and some other meteorological variables. 
The right plot of Figure \ref{data}  displays the functional observations of the daily temperature.	
In our study,  the goal is  to explain the daily wind speed $Y$
(averaged over 1980-2009) through  the daily temperature in each 
weather station (functional covariate $\bX$), and  the altitude of each station (scalar covariate $Z$).
The null hypothesis  is:
	\begin{equation*} 
		H_0: \quad Y  = \beta Z+\left\langle  \bX,\brho \right\rangle +\varepsilon, \quad \text{for some}~  \brho\in \mathcal{H}.	
	\end{equation*}
With the same procedures as before, Table \ref{tab:temp_test} reports the $p$-values at  different numbers of projections.
%the CvM and KS tests lead to $p$-values 0.015 and  0.023 respectively.
Thus we reject the null hypothesis and there is no evidence that the effect of the  daily temperature on the  wind speed
 is linear at level $\alpha=0.05$.

% Table generated by Excel2LaTeX from sheet 'Sheet1'
\begin{table}[htbp]
  \centering
  \caption{The $p$-values of the CvM and KS  tests  with different numbers of projections in the AEMET data set.}
    \begin{tabular}{cccccc}
    \hline
          & $K=2$   & $K=4$   & $K=7$   & $K=10$  & $K=12$ \\
          \hline
    CvM   & $0.008$  & $0.016$  & $0.018$  & $0.019$  & $0.018$  \\
    KS    & $0.020$  & $0.029$  & $0.031$  & $0.022$  & $0.026$  \\
    \hline
    \end{tabular}%
  \label{tab:temp_test}%
\end{table}%

	%\section{Extension}
	%\label{ext}

	%\section{Extension}
	%\label{ext}
	
	\section{Concluding remarks}
	\label{con}
In our paper, we consider testing functional linearity with the existence of mixed-type covariates  in the SFPLR model.  %by applying a feasible finite number of r
A robust two-step parameter estimation procedure is raised in the construction of the test statistics from the projected residual marked empirical process. Theoretical results present that our test statistics converge to a Gaussian process under the null, and are able to detect a series of local alternatives at the parametric rate. The calibration for the critical values of the test statistics is implemented by a wild bootstrap on the residuals. In practice, $p$-values computed from $K=7$ random directions are merged with  the FDR method to lower the effect of random projection choice. 
	
	 To complete the goodness-of-fit tests in the presence of both functional covariates and scalar predictors in the future, we list a few potential  directions:
	
	(a) Testing functional linearity in semi-functional partial linear quantile regression [\cite{ding2018semi}], as a promising extension of our current work.
	
	(b) Testing  linearity of scalar covariates in the functional partial linear model  [\cite{lian2011functional}].

	\section*{Acknowledgements}
		This research is supported by the National Natural Science Foundation of China (with grant numbers 71973005, 11771240, and 12026242).

	%\section{Extension}
	%\label{ext}

	%\section{Extension}
	%\label{ext}

	\begin{spacing}{1.0}
		\bibliography{refnew}
	\end{spacing}

	\setcounter{footnote}{0}
	\setcounter{section}{0} 
	\renewcommand\thesection{\Alph{section}}
	
\newpage

	\begin{center}
	\hypertarget{app}{\Large\textbf{Appendix}}
\end{center}

\section{Auxiliary lemmas} 

We first decompose $T_{n,\bh}(x)$  into the following five
convenient terms:
\begin{align} \label{decom_T}
	T_{n,\bh} (x)&= n^{-1/2} \sum_{i=1}^{n} \mathds{1}_{\left\{\bX_{i}^{\bh} \le x\right\}}
	\left(Y_i -  \bZ_i^{\top}\tilde{\bbeta} - \bX_{i}^{\hat{\brho}}\right) \notag \\
	&= n^{-1/2}\left\{T_{n,\bh}^1 (x)-T_{n,\bh}^2 (x)-T_{n,\bh}^3 (x)-T_{n,\bh}^4 (x)-T_{n,\bh}^5 (x)\right\} ,
\end{align}
where %$a_n\to 0$ is a normalizing positive sequence to be determined later and
\begin{align*}
	T_{n,\bh}^1 (x):&=n^{-1/2}\sum_{i=1}^n \mathds{1}_{\left\{\bX_i^{\bh}\le x\right\}} \left(Y_i-\bZ_i^{\top}{\bbeta}-\bX_{i}^{\brho}
	\right),\\
	T_{n,\bh}^2 (x):&=n^{-1/2}\sum_{i=1}^n
	\left\langle\mathds{1}_{\left\{\bX_i^{\bh}\le x\right\}}\bX_i-\mathbb E \left[{\mathds{1}_{\left\{\bX^{\bh}\le x\right\}}\bX}\right],\hat{\brho}-\brho\right\rangle,
	\\
	T_{n,\bh}^3 (x):&=n^{1/2}\left\langle\mathbb E \left[{\mathds{1}_{\left\{\bX^{\bh}\le x\right\}}\bX}\right],\hat\brho-\brho\right\rangle,\\
	T_{n,\bh}^4 (x):&=n^{-1/2}\sum_{i=1}^n\left(\mathds{1}_{\left\{\bX_i^{\bh}\le x\right\}} \bZ_i^{\top}-\mathbb E \left[\mathds{1}_{\left\{\bX_i^{\bh}\le x\right\}} \bZ_i^{\top}\right]\right)\left(\tilde\bbeta-\bbeta\right),\\
	T_{n,\bh}^5 (x):&=n^{1/2} \mathbb E \left[\mathds{1}_{\left\{\bX^{\bh}\le x\right\}} \bZ^{\top}\right]\left(\tilde\bbeta-\bbeta\right).
\end{align*}

Throughout this section, we employ the following notations for simplicity:
\begin{align*}
	\bar{\bX}_{x,\bh} &:= \frac{1}{n} \sum_{i=1}^{n} \indi \bX_i , \quad 
	\bE_{x,\bh}^{\bX} := \E \left(\ind \bX_1\right) = \E\left(\bar{\bX}_{x,\bh}\right), \\
	\bar{\bZ}_{x,\bh} &:= \frac{1}{n} \sum_{i=1}^{n}\indi \bZ_i^T,  \quad
	\bE_{x,\bh}^{\bZ}:= \E \left(\ind \bZ_1^T\right) = \E\left(\bar{\bZ}_{x,\bh}\right). 
\end{align*}

We now reproduce two lemmas that are helpful to our proofs.

\begin{lemma} 
	\label{lemma1} (Theorem 7.5, \cite{billingsley1999convergence})
	Let $(\Omega, \cF,\mP)$ be a probability space and let $\bX$ map $\Omega$ into $\cC[0,1]$. For $F \in C[0,1]$, denote $\omega(F,h) = \sup_{x,x' \in [0,1], \vert x-x'\vert \le h} \vert F(x) - F(x')\vert $ as the modules of continuity. Suppose that $\bX,\bX^{1},\bX^{2},\cdots$ are random functions. If $\left(\bX_{t_1}^n, \cdots, \bX_{t_k}^n\right) \dto \left(\bX_{t_1},\cdots,\bX_{t_n} \right)$ holds for all $t_1,\cdots,t_k$, and if 
	\begin{equation}
		\lim_{\delta \to 0} \limsup_{n\to \infty} \mP[\omega(\bX^n,\delta) \ge \epsilon] = 0,
	\end{equation}
	for each positive $\epsilon$, then $\bX^n \dto \bX$.
\end{lemma}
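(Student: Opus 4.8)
The plan is to derive this classical criterion of \cite{billingsley1999convergence} by reducing weak convergence in $C[0,1]$ to two ingredients --- tightness of $\{\bX^n\}$ and convergence of the finite-dimensional distributions --- and then combining them through Prokhorov's theorem. The finite-dimensional convergence is assumed outright, while the modulus-of-continuity hypothesis is the device that upgrades those marginals to full tightness; the two ingredients together identify the limiting law uniquely. Since Lemma \ref{lemma1} merely restates a standard theorem, in the paper one simply cites it, so what follows is the reconstruction I would give if a self-contained argument were demanded.

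First I would establish tightness of $\{\bX^n\}$ on $C[0,1]$. The key input is the Arzel\`a--Ascoli characterization of relatively compact subsets of $C[0,1]$: a set is relatively compact exactly when it is bounded at a single point and uniformly equicontinuous, i.e. has a uniformly small modulus of continuity. Translated into probabilistic language, $\{\bX^n\}$ is tight provided that (a) for some fixed $t_0$ the laws of $\bX_{t_0}^n$ form a tight family on $\mR$, and (b) for every $\epsilon>0$ one has $\lim_{\delta\to0}\limsup_{n}\mP[\omega(\bX^n,\delta)\ge\epsilon]=0$. Condition (b) is precisely the hypothesis of the lemma. Condition (a) comes for free: since $\bX_{t_0}^n\dto\bX_{t_0}$ by the assumed finite-dimensional convergence, the one-dimensional marginals converge and hence are tight on $\mR$. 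Assembling (a) and (b) yields tightness of $\{\bX^n\}$ in $C[0,1]$.

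Next I would identify the limit. By Prokhorov's theorem (applicable since $C[0,1]$ is Polish), tightness makes $\{\bX^n\}$ relatively sequentially compact for weak convergence, so every subsequence admits a further subsequence converging weakly to some probability law $Q$ on $C[0,1]$. Because the coordinate projections $\pi_{t_1,\dots,t_k}\colon C[0,1]\to\mR^k$ are continuous, the continuous mapping theorem forces the finite-dimensional distributions of $Q$ to coincide with the assumed limits, that is, with those of $\bX$. Since the Borel $\sigma$-field of $C[0,1]$ is generated by these projections, the finite-dimensional distributions determine the law, so $Q$ must equal the law of $\bX$ regardless of the subsequence chosen. A tight sequence all of whose subsequential weak limits agree converges weakly to that common limit, and therefore $\bX^n\dto\bX$.

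The main obstacle is the tightness step rather than the identification step. Verifying condition (a) from finite-dimensional convergence is immediate, but the genuine content lies in correctly invoking the Arzel\`a--Ascoli compactness criterion and recognizing that the assumed modulus-of-continuity bound is exactly the probabilistic equicontinuity one needs; one must also note that controlling the oscillation alone is insufficient without the anchoring marginal tightness at $t_0$. The measure-theoretic fact that the projections generate the Borel $\sigma$-field of $C[0,1]$, which is what makes the limit unique, is standard but worth stating explicitly, since it is the hinge on which the whole identification turns.
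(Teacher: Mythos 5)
Your reconstruction is correct, and it is precisely the classical argument behind Theorem 7.5 of \cite{billingsley1999convergence} --- tightness in $C[0,1]$ via the Arzel\`a--Ascoli criterion (anchored by marginal tightness at a single $t_0$, which follows from the assumed finite-dimensional convergence), then Prokhorov's theorem and identification of every subsequential limit through the finite-dimensional distributions, which determine the law because the coordinate projections generate the Borel $\sigma$-field of $C[0,1]$. This is exactly the approach the paper relies on: it offers no proof of Lemma \ref{lemma1} and simply cites Billingsley, so your argument coincides with the one implicitly invoked, with all the delicate points (the anchoring marginal, the $\sigma$-field generation, and the subsequence-of-subsequence step) handled correctly.
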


\begin{lemma} (Theorem 1,  \cite{aneiros2006semi})
	\label{lemma2}
	Under Assumptions (A2) and (B1)-(B4), %there exists 
	\begin{align}
		n^{1/2}(\tilde{\bbeta} - \bbeta) \dto N(0,\sigma_{\varepsilon}^2 \bB^{-1}).
	\end{align}
\end{lemma}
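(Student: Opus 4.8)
The plan is to follow the standard profiling argument for partially linear models, adapted to the functional-nonparametric smoother $\bW_b$. Writing $\widetilde m_i := m(\bX_i) - \sum_{j=1}^n w_{n,b}(\bX_i,\bX_j)m(\bX_j)$ and $\widetilde\varepsilon_i := \varepsilon_i - \sum_{j=1}^n w_{n,b}(\bX_i,\bX_j)\varepsilon_j$, applying the operator $I-\bW_b$ to the SFPLR model \eqref{SFPLR} gives $\widetilde{\bY}_b = \widetilde{\bZ}_b\bbeta + \widetilde{\mathbf m} + \widetilde{\boldsymbol\varepsilon}$, where $\widetilde{\mathbf m}=(\widetilde m_1,\ldots,\widetilde m_n)^\top$ and $\widetilde{\boldsymbol\varepsilon}=(\widetilde\varepsilon_1,\ldots,\widetilde\varepsilon_n)^\top$. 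Substituting into \eqref{beta_estimate} yields the exact identity
\begin{equation*}
n^{1/2}(\tilde{\bbeta}-\bbeta) = \left(n^{-1}\widetilde{\bZ}_b^{\top}\widetilde{\bZ}_b\right)^{-1}\left\{n^{-1/2}\widetilde{\bZ}_b^{\top}\widetilde{\boldsymbol\varepsilon} + n^{-1/2}\widetilde{\bZ}_b^{\top}\widetilde{\mathbf m}\right\}.
\end{equation*}
It then suffices to establish (a) $n^{-1}\widetilde{\bZ}_b^{\top}\widetilde{\bZ}_b \pto \bB$, (b) $n^{-1/2}\widetilde{\bZ}_b^{\top}\widetilde{\mathbf m} \pto 0$ (negligibility of the smoothing bias), and (c) $n^{-1/2}\widetilde{\bZ}_b^{\top}\widetilde{\boldsymbol\varepsilon} \dto N(0,\sigma_\varepsilon^2\bB)$; Slutsky's theorem then delivers the claimed $N(0,\sigma_\varepsilon^2\bB^{-1})$ limit.

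The common workhorse behind (a)--(c) is a uniform approximation of the smoothed covariate by its population residual: the $j$-th column of $\widetilde{\bZ}_b$ has entries $\widetilde Z_{ij} = \eta_{ij} + \{g_j(\bX_i) - \widehat g_j(\bX_i)\}$, where $\widehat g_j$ is the Nadaraya--Watson estimate of $g_j(\cdot)=\E(Z_{ij}\mid\bX_i=\cdot)$. Under the covering condition (B1), the small-ball concentration (B3), the H\"older smoothness and moment bounds (B2), and the kernel/bandwidth conditions (B4), one obtains the functional-nonparametric uniform rate $\sup_{x\in\cC}|\widehat g_j(x)-g_j(x)| = O_p(b^{\alpha}) + O_p((\log n/(n\phi(b)))^{1/2})$, and an analogous bound $\sup_i|\widetilde m_i| = O_p(b^\alpha) + O_p((\log n/(n\phi(b)))^{1/2})$, which are exactly the uniform consistency results of \cite{ferraty2006nonparametric} and \cite{aneiros2006semi}. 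For (a), replacing $\widetilde Z_{ij}$ by $\eta_{ij}$ incurs an error that vanishes by the uniform rate, so $n^{-1}\widetilde{\bZ}_b^{\top}\widetilde{\bZ}_b = n^{-1}\sum_i\etab_i\etab_i^{\top} + o_p(1)\pto\bB$ by the law of large numbers, with $\bB$ invertible by (B2). For (c), the smoothing of $\varepsilon$ is negligible and the leading term is $n^{-1/2}\sum_{i=1}^n\etab_i\varepsilon_i$; since $\E[\etab_i\varepsilon_i]=\E[\etab_i\,\E(\varepsilon_i\mid\bX_i,\bZ_i)]=0$ and, by the conditional homoskedasticity in (A2), $\Var(\etab_i\varepsilon_i)=\E[\etab_i\etab_i^{\top}\,\E(\varepsilon_i^2\mid\bX_i,\bZ_i)]=\sigma_\varepsilon^2\bB$, the classical multivariate CLT gives the stated Gaussian limit (the $r\ge3$ moments in (B2) control the negligible replacement remainders).

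The crux is step (b) together with making the above replacements rigorous, and this is where the bandwidth condition $nb^{4\alpha}\to0$ in (B4) enters. The delicate point is that $n^{-1/2}\widetilde{\bZ}_b^{\top}\widetilde{\mathbf m}$ mixes a mean-zero factor ($\eta_{ij}$) with a deterministic bias factor ($\widetilde m_i$): conditioning on $\{\bX_i\}_{i=1}^n$, the term $n^{-1/2}\sum_i\eta_{ij}\widetilde m_i$ has conditional mean zero and conditional variance $n^{-1}\sum_i\widetilde m_i^2\,\Var(\eta_{ij}\mid\bX_i)=O_p(b^{2\alpha})\to0$, hence is negligible; the genuinely dangerous contribution is the product of the \emph{two} $O(b^\alpha)$ bias terms (the bias in estimating $g_j$ against the bias $\widetilde m_i$), which is of size $n^{1/2}b^{2\alpha}$ and vanishes precisely because $nb^{4\alpha}\to0$. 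Verifying uniformly over $\cC$ that the functional NW bias and variance truly decouple at the parametric scale is the main obstacle, and it rests entirely on the uniform rates supplied by (B1)--(B4); once these are in hand, (a)--(c) assemble routinely via Slutsky's theorem. Since the statement is quoted verbatim as Theorem 1 of \cite{aneiros2006semi}, the cleanest route is to verify that Assumptions (A2) and (B1)--(B4) imply the hypotheses of that theorem and invoke it directly.
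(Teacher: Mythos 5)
Your proposal is correct and matches the paper's route: the paper proves nothing new here but simply invokes Theorem 1 of \cite{aneiros2006semi}, whose underlying argument is exactly the profiling decomposition you reconstruct (your $\widetilde{\bZ}_b^{\top}\widetilde{\mathbf m}$ and $\widetilde{\bZ}_b^{\top}\widetilde{\boldsymbol\varepsilon}$ terms are the paper's $S_{n1}$ and $S_{n3}-S_{n2}$ in its Appendix decomposition \eqref{decom_beta} and Lemma \ref{lemma3}). Your closing suggestion---verify that (A2) and (B1)--(B4) imply the hypotheses of that theorem and cite it directly---is precisely what the paper does.
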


\

%\begin{lemma} \label{lemma2}
%	Under assumptions \ref{A16} and \ref{A19}, we have 
%	\begin{align*}
%	k_n^3(\log k_n)^2 &= o(n^{1/2}) \\
%	\nu(d_n = k_n) &\to 1 \, as \, n \to \infty
%	\end{align*}
%\end{lemma}

Denote ${D}_i = Y_i - \bZ_i^{\top} \bbeta$ and recall $\tilde{D}_i = Y_i - \bZ_i^{\top} \tilde{\bbeta}$ in Section 2.2, we then decompose $\hat{\brho} - \brho$  into two parts:
\begin{equation}\label{rho_decom}
	\hat{\brho} - \brho = (\hat{\brho} - \widetilde{\brho}) + (\widetilde{\brho} - \brho),
\end{equation}
where $\widetilde{\brho}$ is the estimated functional coefficient based on $\left\{(\bX_i,{D}_i)\right\}_{i=1}^{n}$.	Define $\bU_n = \frac{1}{n} \sum_{i=1}^{n} \bX_i \otimes \varepsilon_i$ and  the definition of $\widetilde{\brho}$ leads to 
\begin{equation}\label{rho}
	\widetilde{\brho} = \Gamma_n^{\dagger} \Gamma_n \brho + \left(\Gamma_n^{\dagger} - \Gamma^{\dagger}\right) \bU_n + \Gamma^{\dagger} \bU_n.
\end{equation} 
Then we derive the following decomposition:
\begin{equation} \label{decom_rho}
	\widetilde{\brho} - \brho = \bL_n + \bY_n + \bS_n + \bR_n + \bT_n,
\end{equation}
where $\bL_n := -\sum_{j=k_n+1}^{\infty} \langle \brho,\be_j\rangle \be_j$, $\bY_n := \sum_{j=1}^{k_n}\left(\langle \brho,\hat{\be}_j \rangle \hat{\be}_j - \langle \brho,\be_j \rangle \be_j\right)$, $\bS_n := (\Gamma_n^{\dagger} - \Gamma^{\dagger})\bU_n$, $\bR_n := \Gamma^{\dagger}\bU_n$, and $\bT_n := \Gamma_n^{\dagger} \Gamma_n \brho - \sum_{j=1}^{k_n} \langle\brho, \hat{\be}_j \rangle \hat{\be}_j$. Note that $\bT_n = 0$ by the construction of $\Gamma_n^{\dagger}$.

Equation  (20) in \cite{aneiros2006semi} implies that
\begin{equation} 
	\label{decom_beta}
	n^{1/2}\left(\tilde{\bbeta} - \bbeta\right) = \left(n^{-1} \widetilde{\bZ_b}^{\top} \widetilde{\bZ_b}\right)^{-1} n^{-1/2} \left(S_{n1} - S_{n2} + S_{n3}\right),
\end{equation}
where $S_{n1} = \sum_{i=1}^{n} \widetilde{\bZ}_i \widetilde{m}_b(\bX_i)$, $S_{n2} = \sum_{i=1}^{n} \widetilde{\bZ}_i \left(\sum_{l=1}^{n} w_{n,b}(\bX_i,\bX_l)\varepsilon_l\right)$, and $S_{n3} = \sum_{i=1}^{n} \widetilde{\bZ}_i\varepsilon_i$. Here, we denote by $\widetilde{m}_b\left(\bX_i\right) = m(\bX_i) - \sum_{j=1}^{n} w_{n,b}\left(\bX_i,\bX_j\right)m\left(\bX_j\right)$.

\begin{lemma} \label{lemma3}
	Under Assumptions (A2) and (B1)-(B4), %there exists
	\begin{align*}
		n^{-1}\widetilde{\bZ_b}^T\widetilde{\bZ_b} \asto \bB, &\quad \quad
		n^{-1/2} S_{n1} = o_{a.s.}(1), \\
		n^{-1/2} S_{n2} = o_{a.s.}(1), &\quad \quad
		n^{-1/2} S_{n3} = n^{-1/2} \sum_{i=1}^{n} \etab_i \varepsilon_i + o_{\mP}(1).
	\end{align*}
\end{lemma}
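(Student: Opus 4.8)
The plan is to exploit the partially linear structure $\bZ_i=\mathbf g(\bX_i)+\etab_i$, where $\mathbf g(\bX_i)=(g_1(\bX_i),\ldots,g_p(\bX_i))^{\top}=\E(\bZ_i\mid\bX_i)$, so that the smoothed covariate splits as $\widetilde{\bZ}_i=\widetilde{\mathbf g}_b(\bX_i)+\widetilde{\etab}_{b,i}$, with $\widetilde{\mathbf g}_b(\bX_i)=\mathbf g(\bX_i)-\sum_j w_{n,b}(\bX_i,\bX_j)\mathbf g(\bX_j)$ a pure smoothing bias and $\widetilde{\etab}_{b,i}=\etab_i-\sum_j w_{n,b}(\bX_i,\bX_j)\etab_j$ a noise term. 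The two recurring estimates I would establish first, borrowed from the uniform-convergence machinery for functional Nadaraya--Watson estimators (Ferraty--Vieu and the proof of Theorem~1 in \cite{aneiros2006semi}), are: \textbf{(a)} the H\"older condition (B2) with the compact support of $K$ in (B4) yields $\max_i\Vert\widetilde{\mathbf g}_b(\bX_i)\Vert=O_{a.s.}(b^{\alpha})$ and $\max_i|\widetilde m_b(\bX_i)|=O_{a.s.}(b^{\alpha})$; \textbf{(b)} the small-ball condition (B3) with the bandwidth rate in (B4) gives $n\phi(b)\to\infty$ together with almost-sure negligibility of weighted noise averages such as $\max_i\Vert\sum_j w_{n,b}(\bX_i,\bX_j)\etab_j\Vert$. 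With these, the first claim follows by substituting the uniform bound $\widetilde{\bZ}_i=\etab_i+o_{a.s.}(1)$ to reduce $n^{-1}\widetilde{\bZ}_b^{\top}\widetilde{\bZ}_b$ to $n^{-1}\sum_i\etab_i\etab_i^{\top}\asto\bB$ by the strong law, the cross and remainder terms being controlled via (a)--(b) and the moment bound $\E\Vert\etab_1\Vert^r<\infty$ in (B2).

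For $n^{-1/2}S_{n1}$ the decisive factor is $\widetilde m_b(\bX_i)=O_{a.s.}(b^{\alpha})$. Writing $\widetilde{\bZ}_i=\widetilde{\mathbf g}_b(\bX_i)+\widetilde{\etab}_{b,i}$, the bias-against-bias piece $n^{-1/2}\sum_i\widetilde{\mathbf g}_b(\bX_i)\widetilde m_b(\bX_i)$ is $O((nb^{4\alpha})^{1/2})$, which is exactly where the rate $nb^{4\alpha}\to0$ of (B4) is used, while the noise-against-bias piece $n^{-1/2}\sum_i\etab_i\widetilde m_b(\bX_i)$ has conditional variance $O(b^{2\alpha})$ given $\{\bX_i\}$ and is therefore negligible; the almost-sure version is obtained from the exponential inequalities underlying estimate (b).

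For $n^{-1/2}S_{n2}$, rather than crudely bounding the smoothed errors $\sum_l w_{n,b}(\bX_i,\bX_l)\varepsilon_l$ by their (too large) uniform maximum, I would reorder the double sum as $S_{n2}=\sum_l\varepsilon_l\,a_l$ with $a_l=\sum_i\widetilde{\bZ}_i w_{n,b}(\bX_i,\bX_l)$ and condition on $(\{\bX_i\},\{\bZ_i\})$. Since $\E(\varepsilon_l\mid\bX,\bZ)=0$ by (A2), the conditional variance is $\sigma_{\varepsilon}^2\sum_l\Vert a_l\Vert^2$, and the small-ball bounds give $\sum_i w_{n,b}(\bX_i,\bX_l)=O_{a.s.}(1)$ and $\sum_{i,l}w_{n,b}(\bX_i,\bX_l)^2=O_{a.s.}(\phi(b)^{-1})$, whence $n^{-1/2}S_{n2}=O_p(\{b^{2\alpha}+(n\phi(b))^{-1}\}^{1/2})=o_p(1)$, upgraded to $o_{a.s.}(1)$ through (b). Finally, for $S_{n3}$ I would split $n^{-1/2}S_{n3}=n^{-1/2}\sum_i\etab_i\varepsilon_i+n^{-1/2}\sum_i\widetilde{\mathbf g}_b(\bX_i)\varepsilon_i-n^{-1/2}\sum_i\big(\sum_j w_{n,b}(\bX_i,\bX_j)\etab_j\big)\varepsilon_i$; the second term has conditional variance $O(b^{2\alpha})=o(1)$, and the third is a centered bilinear form in the independent-given-$\bX$ noises $\{\etab_j\}$ and $\{\varepsilon_i\}$ whose conditional variance is $O(\sum_{i,j}w_{n,b}^2)=O(\phi(b)^{-1})$, so after the $n^{-1/2}$ scaling it is $O_p((n\phi(b))^{-1/2})=o_p(1)$, leaving the asserted leading term.

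The hard part, on which all four claims ultimately rest, is the pair of functional-kernel estimates (a)--(b): controlling the weighted noise sums and cross terms \emph{uniformly}, and converting the small-ball probability $\phi(b)$ together with the bandwidth rate $\phi(b)\ge n^{2/r+d-1}/(\log n)^2$ of (B4) into genuine almost-sure negligibility via Bernstein/exponential-type inequalities for the averages that are dependent only through $\bX$. Once these are in hand, everything else reduces to the routine conditional second-moment computations sketched above.
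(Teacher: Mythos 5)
Your proposal is correct and takes essentially the same route as the paper: the paper's entire proof of this lemma is the single line ``See Lemma 7 and proof of Theorem 1 in \cite{aneiros2006semi}'', and your sketch is a faithful reconstruction of exactly that argument --- the split $\bZ_i=\mathbf{g}(\bX_i)+\etab_i$, the $O(b^{\alpha})$ H\"older bias bounds from (B2) and the compact kernel support in (B4), the conditional second-moment calculations for the noise and cross terms (with $nb^{4\alpha}\to 0$ killing the bias-against-bias piece in $S_{n1}$), and the Ferraty--Vieu small-ball/exponential-inequality machinery, where $\phi(b)\ge n^{2/r+d-1}/(\log n)^2$ enters, to upgrade the $S_{n1}$ and $S_{n2}$ statements to almost-sure ones. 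Since the paper delegates all of this to the cited reference, your blind sketch supplies the very content that citation stands for, and its structure and use of the assumptions match it.
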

\begin{proof}%[Proof of Lemma \ref{lemma8}]
	See Lemma 7 and proof of Theorem 1 in \cite{aneiros2006semi}.
\end{proof}

\

In light of \eqref{rho_decom}, the term $T_{n,\bh}^2(x)$ can be expressed as 
\begin{equation*}
	T_{n,\bh}^2(x) = n^{1/2} \left\langle \bar{\bX}_{x,\bh} - \bE_{x,\bh}^{\bX}, \hat{\brho} - \widetilde{\brho} \right\rangle + n^{1/2} \left\langle \bar{\bX}_{x,\bh} - \bE_{x,\bh}^{\bX}, \widetilde{\brho} - \brho \right\rangle.
\end{equation*}

The following two lemmas then yield that $T_{n,\bh}^2(x) = o_{\mP}(1)$.

\begin{lemma} \label{lemma4}
	Under Assumptions (A2),  (C1)-(C6) and (C8)-(C9), %we have
	\begin{equation*}
		 n^{1/2} \left\langle \bar{\bX}_{x,\bh} - \bE_{x,\bh}^{\bX}, \widetilde{\brho} - \brho \right\rangle  = o_{\mP}(1).
	\end{equation*}
\end{lemma}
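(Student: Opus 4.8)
The inner-product structure is what renders this estimation term asymptotically negligible, and the plan is to decouple its two factors by Cauchy--Schwarz. Pushing the $n^{1/2}$ scaling into the first argument and applying the Cauchy--Schwarz inequality in $\cH$,
\[
\left| n^{1/2}\left\langle \bar{\bX}_{x,\bh} - \bE_{x,\bh}^{\bX},\, \widetilde{\brho} - \brho\right\rangle\right| \le \left\| n^{1/2}\left(\bar{\bX}_{x,\bh} - \bE_{x,\bh}^{\bX}\right)\right\|\,\left\|\widetilde{\brho} - \brho\right\|.
\]
Since the second factor does not depend on $x$, it suffices to show that the first factor is $O_{\mP}(1)$ and the second is $o_{\mP}(1)$; their product is then $o_{\mP}(1)$.

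First I would control the scaled empirical average. Because $\bar{\bX}_{x,\bh} - \bE_{x,\bh}^{\bX} = n^{-1}\sum_{i=1}^{n}\left(\indi\bX_i - \bE_{x,\bh}^{\bX}\right)$ is an average of i.i.d. centered $\cH$-valued summands, a direct second-moment computation gives
\[
\E\left\| n^{1/2}\left(\bar{\bX}_{x,\bh} - \bE_{x,\bh}^{\bX}\right)\right\|^2 = \E\left\|\ind\bX_1 - \bE_{x,\bh}^{\bX}\right\|^2 \le \E\left\|\bX\right\|^2 < \infty,
\]
finiteness being Assumption (C3). Markov's inequality then yields $\left\| n^{1/2}(\bar{\bX}_{x,\bh} - \bE_{x,\bh}^{\bX})\right\| = O_{\mP}(1)$, with a bound uniform in $x$. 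Should a uniform-in-$x$ statement be required at the process level, I would upgrade this to $\sup_{x\in\mR}\| n^{1/2}(\bar{\bX}_{x,\bh} - \bE_{x,\bh}^{\bX})\| = O_{\mP}(1)$ by noting that the half-line indicators $\{\mathds{1}_{\{\cdot\le x\}}:x\in\mR\}$ form a VC (hence Donsker) class, so the $\cH$-valued empirical process $x\mapsto n^{1/2}(\bar{\bX}_{x,\bh}-\bE_{x,\bh}^{\bX})$ is asymptotically tight.

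The second factor is the consistency of the oracle slope estimator $\widetilde{\brho}$, built from $\{(\bX_i,D_i)\}_{i=1}^{n}$ with $D_i = Y_i - \bZ_i^{\top}\bbeta$; under $H_0^{\bh}$ this is precisely the functional linear model $D_i = \bX_i^{\brho}+\varepsilon_i$ analysed in \cite{cardot2007clt}. Using the decomposition $\widetilde{\brho}-\brho = \bL_n + \bY_n + \bS_n + \bR_n$ (with $\bT_n = 0$), the deterministic bias obeys $\|\bL_n\|^2 = \sum_{j>k_n}\langle\brho,\be_j\rangle^2 \to 0$ since $k_n\to\infty$ and $\sum_j|\langle\brho,\be_j\rangle|<\infty$ by (C4); the leading stochastic term satisfies, via the Karhunen--Lo\`eve expansion \eqref{KL} and (A2),
\[
\E\left\|\bR_n\right\|^2 = \frac{\sigma_{\varepsilon}^2}{n}\sum_{j=1}^{k_n}\frac{1}{\lambda_j} = o(1),
\]
where the last step uses that (C9) forces $\lambda_{k_n}\gtrsim n^{-1/2}$ while (C5)--(C6) keep $k_n$ growing slowly, so that $n^{-1}\sum_{j\le k_n}\lambda_j^{-1}\le k_n/(n\lambda_{k_n})\to 0$; the remainders $\bY_n$ and $\bS_n$ are of smaller order by the arguments of \cite{cardot2007clt}. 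Hence $\|\widetilde{\brho}-\brho\| = o_{\mP}(1)$.

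Combining the two displays, the Cauchy--Schwarz bound is $O_{\mP}(1)\cdot o_{\mP}(1) = o_{\mP}(1)$, as claimed. The decoupling itself is immediate; the genuinely substantive ingredient is the $\cH$-norm consistency of $\widetilde{\brho}$ in the ill-posed inversion of $\Gamma$, which is exactly where Assumptions (C1)--(C6) and (C8)--(C9) are consumed and which I would import wholesale from \cite{cardot2007clt}. The only other point needing care is the uniform-in-$x$ upgrade of the first factor, which is handled by the VC/Donsker property of half-lines.
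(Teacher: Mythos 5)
Your proposal is correct in outline, but it organizes the argument differently from the paper. The paper's proof of this lemma is a direct citation: it invokes Lemmas A.3 to A.6 of \cite{cuesta2019goodness}, which bound $n^{1/2}\langle \bar{\bX}_{x,\bh}-\bE_{x,\bh}^{\bX}, \cdot\rangle$ \emph{term by term} over the decomposition $\widetilde{\brho}-\brho=\bL_n+\bY_n+\bS_n+\bR_n$, exploiting the pairing structure (the empirical average has norm $O_{\mP}(n^{-1/2})$ and its coordinates in the eigenbasis of $\Gamma$ inherit the decay $\lambda_j$, which partially offsets the $\lambda_j^{-1}$ blow-up in $\bR_n$ and $\bS_n$). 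You instead decouple via Cauchy--Schwarz with the $n^{1/2}$ placed on the empirical-average factor, reducing everything to plain norm consistency $\Vert\widetilde{\brho}-\brho\Vert=o_{\mP}(1)$. This placement is a genuinely good idea: it is weaker than what the paper itself demands elsewhere (in the tightness step of Theorem 3.1 the paper puts $n^{1/2}$ on $\Vert\hat{\brho}-\brho\Vert$ and must then invoke the strong condition (ii), $\E\Vert\hat{\brho}-\brho\Vert^4=\bO(n^{-2})$), and your treatment of $\bL_n$ and $\bR_n$ is correct, including the bound $n^{-1}\sum_{j\le k_n}\lambda_j^{-1}\le k_n/(n\lambda_{k_n})\to 0$, which holds since (C6) and (C9) force $k_n^3(\log k_n)^2=o(n^{1/2})$.

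The soft spot is your final delegation: "import wholesale from \cite{cardot2007clt}" the claims $\Vert\bY_n\Vert=o_{\mP}(1)$ and $\Vert\bS_n\Vert=o_{\mP}(1)$. Those statements are not off-the-shelf results there: the headline theorems of \cite{cardot2007clt} are deliberately formulated for $\Gamma^{1/2}$-weighted quantities and prediction error, precisely because unweighted norm asymptotics for the spectral-cutoff estimator are ill-behaved, and the lemmas of \cite{cuesta2019goodness} prove inner-product bounds, not norm bounds. The claims are nonetheless true under the stated assumptions and can be proved with the same machinery --- e.g.\ $\Vert\bS_n\Vert\le\Vert\Gamma_n^{\dagger}\bU_n\Vert+\Vert\Gamma^{\dagger}\bU_n\Vert$ with the conditional second-moment computation $\E[\Vert\Gamma_n^{\dagger}\bU_n\Vert^2\mid \bX_1,\dots,\bX_n]=\sigma_\varepsilon^2 n^{-1}\sum_{j\le k_n}\hat{\lambda}_j^{-1}$, which requires the relative eigenvalue concentration $\hat\lambda_{k_n}\gtrsim\lambda_{k_n}$ guaranteed by the gap-aware definition of $k_n$; and $\Vert\bY_n\Vert=\Vert(\hat{\Pi}_{k_n}-\Pi_{k_n})\brho\Vert=o_{\mP}(1)$ via a truncation in (C4) plus fixed-index eigenfunction perturbation --- but this is exactly where the real work lives, so it should be spelled out rather than cited. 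A minor further caution: your optional uniform-in-$x$ upgrade via "VC hence Donsker" needs care, since the empirical process here is $\cH$-valued and vector-valued Donsker theorems require more than a VC index plus a square-integrable envelope; fortunately the lemma is only used pointwise in $x$ for the finite-dimensional convergence, so this is not needed.
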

\begin{proof}%[Proof of Lemma \ref{lemma4}]
	See Lemmas A.3 to A.6 in \cite{cuesta2019goodness}.
	%Recalling the decomposition of $\widetilde{\brho} - \brho$ in (\ref{decom_rho}), note that the results in Lemma A.3 to A.6 in \cite{cuesta2019goodness} still hold after  taking supreme  absolute value over  $x$ in $\mathbb{R}$, we can arrive at the conclusion.
\end{proof}

\begin{lemma} 
	\label{lemma5}
	Under Assumptions (A2), (B2)-(B4) and (C3), %we have
	\begin{equation*}
		 n^{1/2} \left\langle \bar{\bX}_{x,\bh} - \bE_{x,\bh}^{\bX}, \hat{\brho} 
		- \widetilde{\brho} \right\rangle  = o_{\mP}(1).
	\end{equation*} 
\end{lemma}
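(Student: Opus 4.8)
The plan is to reduce $\hat{\brho}-\widetilde{\brho}$ to the estimation error of $\tilde{\bbeta}$ in closed form and then show that the induced bilinear form is negligible. First I would exploit that $\widetilde{\brho}$ and $\hat{\brho}$ are built by the \emph{same} regularized formula \eqref{rhohat}, one from the marks $D_i=Y_i-\bZ_i^{\top}\bbeta$ and the other from $\widetilde{D}_i=Y_i-\bZ_i^{\top}\tilde{\bbeta}$. Since $\widetilde{D}_i-D_i=\bZ_i^{\top}(\bbeta-\tilde{\bbeta})$, subtracting the two expressions gives
\begin{equation*}
  \hat{\brho}-\widetilde{\brho}=\Gamma_n^{\dagger}\bv_n,\qquad
  \bv_n:=\frac{1}{n}\sum_{i=1}^{n}\bigl(\bZ_i^{\top}(\bbeta-\tilde{\bbeta})\bigr)\bX_i .
\end{equation*}
Writing $M_n$ for the empirical operator $\mR^{p}\to\cH$ defined by $M_n\mathbf{a}:=n^{-1}\sum_{i=1}^{n}(\bZ_i^{\top}\mathbf{a})\bX_i$, we have $\bv_n=M_n(\bbeta-\tilde{\bbeta})$, so that, setting $A_x:=\bar{\bX}_{x,\bh}-\bE_{x,\bh}^{\bX}$, the target reads
\begin{equation*}
  n^{1/2}\bigl\langle A_x,\,\hat{\brho}-\widetilde{\brho}\bigr\rangle
  =\bigl\langle A_x,\,\Gamma_n^{\dagger}M_n\,n^{1/2}(\bbeta-\tilde{\bbeta})\bigr\rangle .
\end{equation*}

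Next I would assemble three bounds and combine them by Cauchy--Schwarz. By Lemma \ref{lemma2}, $n^{1/2}(\bbeta-\tilde{\bbeta})=O_{\mP}(1)$, hence $\|\hat{\brho}-\widetilde{\brho}\|\le\|\Gamma_n^{\dagger}M_n\|_{\mathrm{op}}\,\|\bbeta-\tilde{\bbeta}\|$ with $\|\bbeta-\tilde{\bbeta}\|=O_{\mP}(n^{-1/2})$. For the centered average $A_x$, the summands $\indx\,\bX$ have square-integrable envelope $\|\bX\|$ by (C3) and the half-line indicators $\{\indx\}_{x\in\mR}$ form a VC class, so a maximal inequality for the $\cH$-valued empirical process yields $\sup_{x\in\mR}\|A_x\|=O_{\mP}(n^{-1/2})$ (this is the same machinery underlying Lemmas A.3--A.6 of \cite{cuesta2019goodness} invoked for Lemma \ref{lemma4}). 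Granting the key claim $\|\Gamma_n^{\dagger}M_n\|_{\mathrm{op}}=O_{\mP}(1)$, Cauchy--Schwarz then gives, uniformly in $x$,
\begin{equation*}
  n^{1/2}\bigl|\langle A_x,\hat{\brho}-\widetilde{\brho}\rangle\bigr|
  \le n^{1/2}\,\|A_x\|\,\|\hat{\brho}-\widetilde{\brho}\|
  = n^{1/2}\,O_{\mP}(n^{-1/2})\,O_{\mP}(n^{-1/2})=O_{\mP}(n^{-1/2})=o_{\mP}(1).
\end{equation*}

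The main obstacle is exactly the boundedness of $\|\Gamma_n^{\dagger}M_n\|_{\mathrm{op}}$. The naive estimate is far too lossy: the regularized inverse has operator norm $\hat{\lambda}_{k_n}^{-1}$, and because the threshold obeys $c_n\asymp n^{-1/2}$ one has $\hat{\lambda}_{k_n}\asymp n^{-1/2}$, so $\|\Gamma_n^{\dagger}M_n\|_{\mathrm{op}}\le\hat{\lambda}_{k_n}^{-1}\|M_n\|_{\mathrm{op}}=O_{\mP}(n^{1/2})$ would only deliver $O_{\mP}(1)$. The point is that $M_n\mathbf{a}$ does not excite the rough high-index directions: by the Karhunen--Lo\`eve structure \eqref{KL}, the population limit $M\mathbf{a}=\E[(\bZ^{\top}\mathbf{a})\bX]$ has Fourier coefficients $\langle M\mathbf{a},\be_j\rangle=\E[(\bZ^{\top}\mathbf{a})\langle\bX,\be_j\rangle]=O(\lambda_j^{1/2})$, and the standing smoothness condition (C1) (finiteness of $\sum_j\lambda_j^{-2}\langle\E(\bX Z_{il}),\be_j\rangle^{2}$) ensures that $\Gamma_n^{\dagger}M_n$ converges to the \emph{bounded} operator $\Gamma^{-1}M$, whence $\|\Gamma_n^{\dagger}M_n\|_{\mathrm{op}}=O_{\mP}(1)$. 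I would make this rigorous by inserting $M$ for $M_n$ and $(\lambda_j,\be_j)$ for $(\hat{\lambda}_j,\hat{\be}_j)$, controlling the remainders through the eigenelement consistency rates of \cite{cardot2007clt} and the control of $k_n$ from \cite{cuesta2019goodness}. As a fallback avoiding (C1), a direct coordinate-wise bound gives $\sum_{j\le k_n}\hat{\lambda}_j^{-1}\langle A_x,\hat{\be}_j\rangle\langle M_n\mathbf{a},\hat{\be}_j\rangle=O_{\mP}(k_n n^{-1/2})$, which is still $o_{\mP}(1)$ since $k_n=o(n^{1/2})$ under (C6) and (C9); this robustness is why I would state the estimate in the operator form above and keep the coordinate bound in reserve.
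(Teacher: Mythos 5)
Your opening reduction is exactly the paper's: its proof likewise writes $\hat{\brho}-\widetilde{\brho}=\Gamma_n^{\dagger}\bigl[n^{-1}\sum_{i=1}^{n}\bX_i\otimes\bZ_i^{\top}(\bbeta-\tilde{\bbeta})\bigr]$ and pairs this with $\bar{\bX}_{x,\bh}-\bE_{x,\bh}^{\bX}$. Where you diverge is in how the resulting bilinear form is killed. The paper shows by a direct second-moment computation (i.i.d.-ness, Cauchy--Schwarz, and (C3) only) that $n^{-1}\sum_{i=1}^{n}\langle\bX_i,\bar{\bX}_{x,\bh}-\bE_{x,\bh}^{\bX}\rangle\bZ_i^{\top}=O_{\mP}(n^{-1/2})$ pointwise in $x$, multiplies by $n^{1/2}(\bbeta-\tilde{\bbeta})=O_{\mP}(1)$ from Lemma \ref{lemma2}, and then disposes of the outer $\Gamma_n^{\dagger}$ in one line by noting it is finite-rank, hence compact, and citing a continuous mapping theorem. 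You instead factor the target as $\|A_x\|\cdot\|\Gamma_n^{\dagger}M_n\|_{\mathrm{op}}\cdot\|n^{1/2}(\bbeta-\tilde{\bbeta})\|$ and isolate the operator-norm claim $\|\Gamma_n^{\dagger}M_n\|_{\mathrm{op}}=O_{\mP}(1)$ as the crux. This is a genuinely different (and more honest) treatment of the one step the paper glosses over: since $\|\Gamma_n^{\dagger}\|_{\mathrm{op}}=\hat{\lambda}_{k_n}^{-1}$ diverges at rate roughly $n^{1/2}$ under (C9), "compact plus continuous mapping" does not by itself transfer the $o_{\mP}(1)$ statement through a \emph{sequence} of operators with exploding norms, and your observation that naive norm bounds stall at $O_{\mP}(1)$ applies to the paper's own argument as well. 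Your main route, $\Gamma_n^{\dagger}M_n\to\Gamma^{-1}M$ bounded via the smoothness in (C1), is a legitimate repair, at the cost of invoking (C1) (and, for the fallback, (C6)/(C9)), which are not in the lemma's stated hypothesis list (A2), (B2)--(B4), (C3); they are, however, standing assumptions of Theorem \ref{theorem1} where the lemma is used, and the paper's own list is already incomplete (its proof uses Lemma \ref{lemma2}, which needs (B1)), so this is not a substantive defect.

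One concrete quantitative error: your reserve bound $\sum_{j\le k_n}\hat{\lambda}_j^{-1}\langle A_x,\hat{\be}_j\rangle\langle M_n\mathbf{a},\hat{\be}_j\rangle=O_{\mP}(k_n n^{-1/2})$ is overstated. With $\mathbf{a}=n^{1/2}(\bbeta-\tilde{\bbeta})=O_{\mP}(1)$, the generic per-term bounds are $|\langle A_x,\hat{\be}_j\rangle|\le\|A_x\|=O_{\mP}(n^{-1/2})$ and, by the empirical Cauchy--Schwarz inequality together with $n^{-1}\sum_{i=1}^{n}\langle\bX_i,\hat{\be}_j\rangle^2=\hat{\lambda}_j$, $|\langle M_n\mathbf{a},\hat{\be}_j\rangle|\le\hat{\lambda}_j^{1/2}\bigl(n^{-1}\sum_{i=1}^{n}(\bZ_i^{\top}\mathbf{a})^2\bigr)^{1/2}=\hat{\lambda}_j^{1/2}O_{\mP}(1)$. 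Each summand is therefore $\hat{\lambda}_j^{-1/2}O_{\mP}(n^{-1/2})$, not $O_{\mP}(n^{-1/2})$, and since $\hat{\lambda}_j^{-1/2}\le\hat{\lambda}_{k_n}^{-1/2}=O_{\mP}(n^{1/4})$, the sum is $O_{\mP}(k_n n^{-1/4})$. The conclusion survives, because (C6) and (C9) give $k_n^3(\log k_n)^2=o(n^{1/2})$, hence $k_n=o(n^{1/6})$ and $k_n n^{-1/4}=o(1)$, but the rate you claim does not: obtaining $O_{\mP}(n^{-1/2})$ per term would require $\hat{\lambda}_j^{-1}\langle M_n\mathbf{a},\hat{\be}_j\rangle=O_{\mP}(1)$ uniformly in $j\le k_n$, which is exactly the (C1)-type smoothness the fallback was meant to avoid.
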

\begin{proof}%[Proof of Lemma \ref{lemma3}]
	By construction of the estimator of $\brho$ and Assumption (A2), one has
	\begin{align*}
		\hat{\brho} - \widetilde{\brho} &= \Gamma_n^{\dagger}\left[n^{-1} \sum_{i=1}^{n} \bX_i \otimes \left(\widetilde{D}_i - D_i\right)\right]   \\
		&= \Gamma_n^{\dagger} \left[ n^{-1}\sum_{i=1}^{n} \bX_i \otimes \bZ_i^{\top} \left(\bbeta - \tilde\bbeta \right)\right].
	\end{align*}
	Hence,
	\begin{equation*}
		n^{1/2}\left \langle \bar{\bX}_{x,\bh} - \bE_{x,\bh}^{\bX}, \hat{\brho} - \widetilde{\brho} \right\rangle = \Gamma_n^{\dagger} \left[n^{-1} \sum_{i=1}^{n} \langle \bX_i, \bar{\bX}_{x,\bh} - \bE_{x,\bh}^{\bX} \rangle \bZ_i^{\top} n^{1/2}\left(\bbeta - \tilde\bbeta \right) \right].
	\end{equation*}
	Note that $\left\{\bX_i \right\}_{i=1}^{n}$ are i.i.d. and by the Cauchy--Schwarz inequality, %there exists
	\begin{align*}
		\E \left(\left\langle \bX_i,\bar{\bX}_{x,\bh} - \bE_{x,\bh}^{\bX} \right\rangle^2\right) =& \frac{1}{n^2} \E \left\langle \bX_1, \ind \bX_1 - \E \left(\ind \bX_1\right) \right \rangle^2\\
		& +\frac{n-1}{n^2} \E \left\langle \bX_2, \ind \bX_1 - \E \left(\ind \bX_1\right) \right \rangle^2\\
		=& \frac{1}{n^2} \E \left[ \left\langle \bX_1, \ind \bX_1 \right\rangle - \left\langle \bX_1, \E \left(\ind \bX_1\right) \right\rangle \right]^2\\
		&+ \frac{n-1}{n^2} \E \left[ \left\langle \bX_2, \ind \bX_1 \right\rangle - \left\langle \bX_2, \E \left(\ind \bX_1\right) \right\rangle \right]^2 \\
		\le& \frac{2}{n^2} \left[ \E \left\langle\bX_1,\ind \bX_1 \right\rangle^2 + \E \left\langle \bX_1, \E\left(\ind \bX_1 \right) \right\rangle^2\right] \\
		&+ \frac{2n-2}{n^2} \left[ \E \left\langle\bX_2,\ind \bX_1 \right\rangle^2 + \E \left\langle \bX_2, \E\left(\ind \bX_1 \right) \right\rangle^2\right]\\
		\le& \frac{2}{n^2} \left[ \E \Vert\bX_1\Vert^2 \left\Vert \ind \bX_1\right\Vert^2 + \E\Vert\bX_1\Vert^2 \E \left\Vert\ind \bX_1 \right\Vert^2 \right] \\
		&+\frac{2n-2}{n^2} \left[ \E \Vert\bX_2\Vert^2 \E \left\Vert \ind \bX_1\right\Vert^2 + \E\Vert\bX_2\Vert^2 \E \left\Vert\ind \bX_1 \right\Vert^2 \right] \\
		\le& \frac{2}{n^2} \E \Vert\bX_1\Vert^4 + \frac{4n-2}{n^2} \left(\E \Vert\bX_1\Vert^2\right)^{2} .
	\end{align*}

Thus,
\begin{align*}
&\E \left\vert n^{-1}\sum_{i=1}^n\left\langle \bX_i,\bar{\bX}_{x,\bh} - \bE_{x,\bh}^{\bX} \right\rangle\bZ_i^{\top}\right\vert
\le  \E \left\vert \left\langle \bX_1,\bar{\bX}_{x,\bh} - \bE_{x,\bh}^{\bX} \right\rangle\bZ_1^{\top}\right\vert\\
\le & \E \left(\left\langle \bX_i,\bar{\bX}_{x,\bh} - \bE_{x,\bh}^{\bX} \right\rangle^2\right) \E \left\Vert \bZ_1^{\top}\right\Vert^2
\le \left\{\frac{2}{n^2} \E \Vert\bX_1\Vert^4 + \frac{4n-2}{n^2} \left\{\E \Vert\bX_1\Vert^2\right\}^{2}\right\}\E \left\Vert \bZ_1^{\top}\right\Vert^2.
\end{align*}
	
	Together with Assumption (C3), one  concludes that $n^{-1}\sum_{i=1}^n\left\langle \bX_i,\bar{\bX}_{x,\bh} - \bE_{x,\bh}^{\bX} \right\rangle\bZ_i^{\top}  = o_{\mP}(1)$.  Combining with  Lemma \ref{lemma2},  it follows that   
	\begin{equation*}
		n^{-1} \sum_{i=1}^{n} \left\langle \bX_i, \bar{\bX}_{x,\bh} - E_{x,\bh}^{\bX} \right\rangle \bZ_i^T n^{1/2}\left(\bbeta- \tilde\bbeta \right)  = o_{\mP}(1).
	\end{equation*}
	In addition, since $\Gamma_n^{\dagger}$ is a finite-rank operator, it is compact.  Thus, the proof is completed by Theorem 6.3.1 in \cite{resnick2014random}. 
\end{proof}

%\begin{lemma} \label{lemma7}
%	$\left\{t_{n,\bE_{x,\bh}}\right\}$ has asymptotic order between $\bO(1)$ and $\bO(k_n^{1/2})$. In addition, if $\bX$ is Gaussian and satisfies Assumption \ref{B3}, then $\sigma_{\bh}^2 := \Var(\bX^{\bh}) < \infty$ and $\lim_{n} t_{n,\bE_{x,\bh}} = \phi(x/\sigma_{\bh})$.
%\end{lemma}

\begin{lemma} \label{lemma6}
	Under Assumptions (C3), (C4), (C6) and (C7), one has
	\begin{equation*}
		 n^{1/2} \left\langle \bE_{x,\bh}^{\bX},\widetilde\brho-\brho \right\rangle   =n^{-1/2}\sum_{i=1}^{n}\left\langle\E_{x,\bh}^{\bX}, \Gamma^{\dagger}\bX_i \right\rangle \varepsilon_i+ o_{\mP}(1).
	\end{equation*}
\end{lemma}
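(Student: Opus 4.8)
The plan is to feed the decomposition \eqref{decom_rho} of $\widetilde\brho-\brho=\bL_n+\bY_n+\bS_n+\bR_n+\bT_n$ (with $\bT_n=0$) into the inner product against the deterministic direction $\bE_{x,\bh}^{\bX}$, and to show that only the $\bR_n$ summand survives after scaling by $n^{1/2}$, all the others being $o_{\mP}(1)$. This is the uncentered counterpart of Lemma \ref{lemma4}: there the extra factor $\bar\bX_{x,\bh}-\bE_{x,\bh}^{\bX}=O_{\mP}(n^{-1/2})$ annihilates every term, whereas here, with the fixed direction $\bE_{x,\bh}^{\bX}$, the stochastic part $\bR_n$ remains and furnishes the asserted linear representation. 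Since $\widetilde\brho$ is precisely the regularized functional-linear estimator built from $D_i=Y_i-\bZ_i^\top\bbeta$, which obeys $D_i=\bX_i^{\brho}+\varepsilon_i$ under $H_0^{\bh}$, the whole argument parallels Lemmas A.3--A.7 of \cite{cuesta2019goodness}.

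First I would extract the leading term. Viewing $\bU_n=n^{-1}\sum_{i=1}^n\bX_i\otimes\varepsilon_i$ as the element $n^{-1}\sum_{i=1}^n\varepsilon_i\bX_i$ of $\cH$ and using the self-adjointness of $\Gamma^{\dagger}=\sum_{j=1}^{k_n}\lambda_j^{-1}\be_j\otimes\be_j$ together with $\bR_n=\Gamma^{\dagger}\bU_n$, I obtain
\[ n^{1/2}\langle\bE_{x,\bh}^{\bX},\bR_n\rangle=n^{-1/2}\sum_{i=1}^n\langle\bE_{x,\bh}^{\bX},\Gamma^{\dagger}\bX_i\rangle\varepsilon_i, \]
which is exactly the right-hand side of the claim. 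This term is well-behaved: under (A2) a direct computation using $\E[\xi_j\xi_{j'}]=\delta_{jj'}$ gives its variance as $\sigma_{\varepsilon}^2\sum_{j=1}^{k_n}\lambda_j^{-1}\langle\bE_{x,\bh}^{\bX},\be_j\rangle^2=\sigma_{\varepsilon}^2\,t_{n,\bE_{x,\bh}}^2$, which stays bounded by condition (i). It then remains to prove that $n^{1/2}\langle\bE_{x,\bh}^{\bX},\bL_n\rangle$, $n^{1/2}\langle\bE_{x,\bh}^{\bX},\bY_n\rangle$ and $n^{1/2}\langle\bE_{x,\bh}^{\bX},\bS_n\rangle$ are each $o_{\mP}(1)$.

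For the truncation bias $\bL_n=-\sum_{j>k_n}\langle\brho,\be_j\rangle\be_j$ I would use the Karhunen--Lo\'eve bound $|\langle\bE_{x,\bh}^{\bX},\be_j\rangle|=|\E[\ind\langle\bX_1,\be_j\rangle]|\le\lambda_j^{1/2}$, obtained from Cauchy--Schwarz and $\E\xi_j^2=1$, so that $n^{1/2}|\langle\bE_{x,\bh}^{\bX},\bL_n\rangle|\le n^{1/2}\sum_{j>k_n}|\langle\brho,\be_j\rangle|\lambda_j^{1/2}$; the summability (C4) and the decay rate (C7) drive this to $0$, as in Lemma A.7 of \cite{cuesta2019goodness}. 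For the eigenfunction-estimation term I would expand $\langle\bE_{x,\bh}^{\bX},\bY_n\rangle=\sum_{j\le k_n}\left(\langle\brho,\hat\be_j\rangle\langle\bE_{x,\bh}^{\bX},\hat\be_j\rangle-\langle\brho,\be_j\rangle\langle\bE_{x,\bh}^{\bX},\be_j\rangle\right)$ and control it through the perturbation bounds on $\|\hat\be_j-\be_j\|$ governed by the spectral gaps $\delta_j$, which are tamed by (C5), (C6), (C8) and (C9) (the estimates underlying Lemmas A.5--A.6 of \cite{cuesta2019goodness}).

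The main obstacle is the operator-perturbation term $\bS_n=(\Gamma_n^{\dagger}-\Gamma^{\dagger})\bU_n$, which couples the random gap between the empirical and population regularized inverses with the noise aggregate $\bU_n$. Here I would bound $n^{1/2}|\langle\bE_{x,\bh}^{\bX},\bS_n\rangle|\le n^{1/2}\|\bE_{x,\bh}^{\bX}\|\,\|\Gamma_n^{\dagger}-\Gamma^{\dagger}\|\,\|\bU_n\|$, note $\|\bU_n\|=O_{\mP}(n^{-1/2})$ from $\E\|\bU_n\|^2=n^{-1}\E[\varepsilon^2\|\bX\|^2]$ under (A2) and (C3), and then show $\|\Gamma_n^{\dagger}-\Gamma^{\dagger}\|=o_{\mP}(1)$ using the fifth-moment score bound (C8) and the growth control $k_n^3(\log k_n)^2=o(n^{1/2})$ implied by (C6) and (C9), which is the content behind Lemmas A.3--A.4 of \cite{cuesta2019goodness}. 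Collecting the four contributions then yields the stated representation.
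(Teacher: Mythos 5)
Your overall skeleton does match the paper's proof, which is simply a pointer to Lemma A.7 of \cite{cuesta2019goodness}: that proof also feeds the decomposition $\widetilde\brho-\brho=\bL_n+\bY_n+\bS_n+\bR_n+\bT_n$ into the inner product with $\bE_{x,\bh}^{\bX}$ and shows that only $\bR_n$ survives. Your extraction of the leading term, $n^{1/2}\langle\bE_{x,\bh}^{\bX},\bR_n\rangle=n^{-1/2}\sum_{i=1}^n\langle\bE_{x,\bh}^{\bX},\Gamma^{\dagger}\bX_i\rangle\varepsilon_i$, and the variance computation $\sigma_{\varepsilon}^2\sum_{j=1}^{k_n}\lambda_j^{-1}\langle\bE_{x,\bh}^{\bX},\be_j\rangle^2$ bounded via condition (i), are both correct.

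However, your treatment of $\bS_n=(\Gamma_n^{\dagger}-\Gamma^{\dagger})\bU_n$ has a genuine gap: the claim $\Vert\Gamma_n^{\dagger}-\Gamma^{\dagger}\Vert=o_{\mP}(1)$ is false in this regularized regime, and your bound needs exactly that. By (C9), $c_n\asymp n^{-1/2}$, and by the definition of $k_n$ in \eqref{kn}, $\lambda_{k_n}$ is of order $c_n$, so both $\Gamma^{\dagger}$ and $\Gamma_n^{\dagger}$ have operator norms of order $n^{1/2}$. Their difference contains, e.g., the term $\lambda_{k_n}^{-1}\bigl(\hat\be_{k_n}\otimes\hat\be_{k_n}-\be_{k_n}\otimes\be_{k_n}\bigr)$, whose norm is of order $\lambda_{k_n}^{-1}\Vert\Gamma_n-\Gamma\Vert/\delta_{k_n}$; since $\Vert\Gamma_n-\Gamma\Vert=O_{\mP}(n^{-1/2})$ and $\delta_{k_n}\le\lambda_{k_n}\asymp n^{-1/2}$, this does not tend to zero (it typically diverges). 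Hence your chain $n^{1/2}\Vert\bE_{x,\bh}^{\bX}\Vert\,\Vert\Gamma_n^{\dagger}-\Gamma^{\dagger}\Vert\,\Vert\bU_n\Vert=\Vert\Gamma_n^{\dagger}-\Gamma^{\dagger}\Vert\,O_{\mP}(1)$ cannot deliver $o_{\mP}(1)$: the operator-norm product bound is structurally too crude, and controlling $\bS_n$ is precisely the hard part of the cited argument, which goes through refined perturbation expansions of $\Gamma_n^{\dagger}-\Gamma^{\dagger}$ over eigenprojections weighted by the gaps $\delta_j$, together with (C5), (C8) and $k_n^3(\log k_n)^2=o(n^{1/2})$, rather than through any norm bound on $\Gamma_n^{\dagger}-\Gamma^{\dagger}$ itself. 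A secondary, smaller issue of the same flavor: for $\bL_n$, your inequality together with $\lambda_j<c_n\le C_2n^{-1/2}$ for $j>k_n$ only yields $n^{1/2}\vert\langle\bE_{x,\bh}^{\bX},\bL_n\rangle\vert\lesssim n^{1/4}\sum_{j>k_n}\vert\langle\brho,\be_j\rangle\vert$, and (C4) provides no rate for this tail sum, so the step does not close as written; (C7) must be used in the more specific way it is deployed in the cited proof.
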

\begin{proof}%[Proof of Lemma \ref{lemma7}]
	See the proof of Lemma A.7 in \cite{cuesta2019goodness}.
\end{proof}

\begin{lemma} \label{lemma7}
	Under condition (i), Assumptions (A2), (B1)-(B4), (C3) and (C9), 
	\begin{equation*}
		n^{1/2} \left\langle \bE_{x,\bh}^{\bX}, \hat{\brho} - \widetilde{\brho} \right\rangle  =\E_{x,\bh}^{\bX,\bZ} \bB^{-1} n^{-1/2} \sum_{i=1}^{n} \etab_i \varepsilon_i+ o_{\mP}(1),
	\end{equation*} 
with $\bE_{x,\bh}^{\bX,\bZ}$ defined as $-\E \left[\left\langle \Gamma^{-1} \bX,\E_{x,\bh}^{\bX} \right\rangle \bZ^{\top} \right]$.
\end{lemma}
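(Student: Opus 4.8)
The plan is to reduce the whole statement to the asymptotically linear expansion of $n^{1/2}(\tilde\bbeta-\bbeta)$ that is already available from Lemma \ref{lemma3}, treating the regularized operator $\Gamma_n^{\dagger}$ as essentially a deterministic multiplier that acts like $\Gamma^{-1}$ in the relevant weak sense. First I would recall from the proof of Lemma \ref{lemma5} the identity
\[
\hat{\brho} - \widetilde{\brho} = \Gamma_n^{\dagger}\left[n^{-1}\sum_{i=1}^{n}\bZ_i^{\top}(\bbeta - \tilde\bbeta)\,\bX_i\right].
\]
Since $\Gamma_n^{\dagger}$ is self-adjoint, pairing with $\bE_{x,\bh}^{\bX}$ and pulling the scalars $\bZ_i^{\top}(\bbeta-\tilde\bbeta)$ and the factor $n^{1/2}$ outside the inner product gives
\[
n^{1/2}\langle\bE_{x,\bh}^{\bX},\hat{\brho}-\widetilde{\brho}\rangle = \widehat{M}_n\, n^{1/2}(\bbeta-\tilde\bbeta), \qquad \widehat{M}_n := n^{-1}\sum_{i=1}^{n}\langle\Gamma_n^{\dagger}\bE_{x,\bh}^{\bX},\bX_i\rangle\bZ_i^{\top}.
\]
The proof then splits into (a) identifying the probability limit of the $1\times p$ random matrix $\widehat{M}_n$ and (b) inserting the known linear form of $n^{1/2}(\bbeta-\tilde\bbeta)$.

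For part (b), Lemma \ref{lemma3} together with the decomposition \eqref{decom_beta} yields $n^{1/2}(\tilde\bbeta-\bbeta) = \bB^{-1}n^{-1/2}\sum_{i=1}^{n}\etab_i\varepsilon_i + o_{\mP}(1)$, hence $n^{1/2}(\bbeta-\tilde\bbeta) = -\bB^{-1}n^{-1/2}\sum_{i=1}^{n}\etab_i\varepsilon_i + o_{\mP}(1)$, which is $O_{\mP}(1)$ by the central limit theorem under (A2) and (B2). For part (a), I would prove $\widehat{M}_n \pto M := \E[\langle\Gamma^{-1}\bX,\bE_{x,\bh}^{\bX}\rangle\bZ^{\top}]$, and observe that by the definition $\bE_{x,\bh}^{\bX,\bZ} = -\E[\langle\Gamma^{-1}\bX,\bE_{x,\bh}^{\bX}\rangle\bZ^{\top}]$ one has $M = -\bE_{x,\bh}^{\bX,\bZ}$. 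Combining (a) and (b) through Slutsky's theorem then gives $\widehat{M}_n\,n^{1/2}(\bbeta-\tilde\bbeta) = -M\bB^{-1}n^{-1/2}\sum_{i=1}^{n}\etab_i\varepsilon_i + o_{\mP}(1) = \bE_{x,\bh}^{\bX,\bZ}\bB^{-1}n^{-1/2}\sum_{i=1}^{n}\etab_i\varepsilon_i + o_{\mP}(1)$, which is exactly the claim.

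The substance is in part (a). Writing the $k$-th column of $\widehat{M}_n$ as $\langle\Gamma_n^{\dagger}\bE_{x,\bh}^{\bX},\widehat{C}_k\rangle$ with $\widehat{C}_k := n^{-1}\sum_{i=1}^{n}Z_{ik}\bX_i$ and $C_k := \E[Z_k\bX]$, I would first replace $\widehat{C}_k$ by $C_k$ through the factorisation $\langle\Gamma_n^{\dagger}\bE_{x,\bh}^{\bX},\widehat{C}_k - C_k\rangle = \langle(\Gamma_n^{\dagger})^{1/2}\bE_{x,\bh}^{\bX},(\Gamma_n^{\dagger})^{1/2}(\widehat{C}_k - C_k)\rangle$ and Cauchy--Schwarz: the first factor satisfies $\|(\Gamma_n^{\dagger})^{1/2}\bE_{x,\bh}^{\bX}\|^{2} = \sum_{j\le k_n}\hat\lambda_j^{-1}\langle\bE_{x,\bh}^{\bX},\hat\be_j\rangle^2$ and stays bounded by condition (i), while the second factor is $o_{\mP}(1)$ by the Hilbert-space law of large numbers under (C3). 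It then remains to pass to the limit in $\langle\Gamma_n^{\dagger}\bE_{x,\bh}^{\bX},C_k\rangle = \langle\bE_{x,\bh}^{\bX},\Gamma_n^{\dagger}C_k\rangle$, replacing $\Gamma_n^{\dagger}$ by the population regularization $\Gamma^{\dagger}$ and, as $k_n\to\infty$, by $\Gamma^{-1}$; the target $\E[\langle\Gamma^{-1}\bX,\bE_{x,\bh}^{\bX}\rangle Z_k]$ is well defined in the paired sense because condition (i) forces $\langle\Gamma^{-1}\bX,\bE_{x,\bh}^{\bX}\rangle$ to be a finite-variance random variable (its variance is $\|\Gamma^{-1/2}\bE_{x,\bh}^{\bX}\|^2<\infty$), and $\E[Z_k^2]<\infty$ by (C3).

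The hard part will be the operator replacement $\Gamma_n^{\dagger}\rightsquigarrow\Gamma^{\dagger}\rightsquigarrow\Gamma^{-1}$ performed uniformly over the growing truncation level $k_n$: one must show that the empirical-to-population error weighted through the $(\Gamma_n^{\dagger})^{1/2}$-norms (which are only controlled by the relatively weak condition (i), involving $\lambda_j^{-1}$ rather than $\lambda_j^{-2}$) is negligible in probability. This is exactly the delicate step that appears in the proof of Lemma A.7 of \cite{cuesta2019goodness}, i.e.\ our Lemma \ref{lemma6}, and I would dispatch it by invoking the functional-principal-component perturbation bounds of \cite{cardot2007clt} together with the auxiliary Lemmas A.1--A.6 of \cite{cuesta2019goodness}, which under (C9) ensure that $k_n$ grows slowly enough (recall $k_n^{3}(\log k_n)^2 = o(n^{1/2})$) for these errors to vanish. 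Once part (a) is established, Slutsky's theorem closes the argument.
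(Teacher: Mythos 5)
Your proposal is correct and follows essentially the same route as the paper's own proof: starting from the identity $\hat{\brho}-\widetilde{\brho}=\Gamma_n^{\dagger}\left[n^{-1}\sum_{i=1}^{n}\bX_i\otimes\bZ_i^{\top}(\bbeta-\tilde\bbeta)\right]$ taken from the proof of Lemma \ref{lemma5}, identifying the probability limit of the $1\times p$ multiplier (via the weak law of large numbers under (C3) and the replacement of $\Gamma_n^{\dagger}$ by $\Gamma^{-1}$), inserting the asymptotically linear expansion $n^{1/2}(\tilde\bbeta-\bbeta)=-\bB^{-1}n^{-1/2}\sum_{i=1}^{n}\etab_i\varepsilon_i+o_{\mP}(1)$ from Lemma \ref{lemma3}, and closing with Slutsky's theorem. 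The only difference is one of emphasis: the operator-replacement step $\Gamma_n^{\dagger}\rightsquigarrow\Gamma^{-1}$ that you flag as the hard part and handle via condition (i), a Cauchy--Schwarz factorisation through $(\Gamma_n^{\dagger})^{1/2}$, and the auxiliary lemmas of \cite{cuesta2019goodness} is dispatched by the paper in a single sentence invoking $\left\Vert\Gamma_n^{\dagger}-\Gamma^{\dagger}\right\Vert_{\infty}\to 0$ and the construction of $\Gamma^{\dagger}$, so your treatment is, if anything, more careful than the paper's at exactly the point where care is needed.
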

\begin{proof}%[Proof of Lemma \ref{lemma6}]
	Similar to the proof of Lemma \ref{lemma5}, by replacing$\left(\bar{\bX}_{x,\bh} - \bE_{x,\bh}^{\bX}\right)$ with $\bE_{x,\bh}^{\bX}$, one obtains 
	\begin{align*}
		\left\langle n^{1/2}\bE_{x,\bh}^{\bX}, \hat{\brho} - \widetilde{\brho} \right\rangle &= \Gamma_n^{\dagger} \left[n^{-1} \sum_{i=1}^{n} \langle \bX_i,  \bE_{x,\bh}^{\bX} \rangle \bZ_i^{\top} n^{1/2}\left(\bbeta - \tilde\bbeta \right) \right]\\
		&= \Gamma^{-1} \left[n^{-1} \sum_{i=1}^{n} \langle \bX_i,  \bE_{x,\bh}^{\bX} \rangle \bZ_i^{\top} n^{1/2}\left(\bbeta - \tilde\bbeta \right) \right]+o_{\mP}(1)\\
		&= -n^{-1} \sum_{i=1}^{n} \left\langle \Gamma^{-1} \bX_i,\bE_{x,\bh}^{\bX} \right\rangle \bZ_i^{\top} n^{1/2} \left(\tilde\bbeta-\bbeta\right) + o_{\mP}(1)\\
		&= \bE_{x,\bh}^{\bX,\bZ} \bB^{-1} n^{-1/2} \sum_{i=1}^{n} \etab_i \varepsilon_i+ o_{\mP}(1),
	\end{align*}
	where the second equation follows from $\left\Vert\Gamma_n^{\dagger} - \Gamma^{\dagger} \right\Vert_{\infty} \to 0$ as well as the construction of $\Gamma^{\dagger}$ in Section 2.2. Note that the weak law of large numbers together with Assumption (C3) leads to $-n^{-1} \sum_{i=1}^{n} \left\langle \Gamma^{-1} \bX_i,\bE_{x,\bh}^{\bX} \right\rangle \bZ_i^{\top} \pto \bE_{x,\bh}^{\bX,\bZ} $. Then we can obtain the last equation using Lemma \ref{lemma3} and the continuous mapping theorem. 
\end{proof}

\begin{lemma} \label{lemma8}
	Under Assumption (C3),
	\begin{equation*}
		\sup_{x \in \mathbb{R}} \left\vert \bar{\bZ}_{x,\bh} - \bE_{x,\bh}^{\bZ}\right\vert = o_{\mP}(1).
	\end{equation*}
\end{lemma}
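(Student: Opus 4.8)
The plan is to establish a uniform (in $x$) law of large numbers of Glivenko--Cantelli type for the weighted empirical process $\bar{\bZ}_{x,\bh}$. Because the Euclidean norm on $\mathbb{R}^{p}$ is dominated by the sum of the absolute values of the coordinates, it suffices to prove, for each fixed coordinate $j\in\{1,\dots,p\}$, that
\[
\sup_{x\in\mathbb{R}}\Bigl|\,n^{-1}\sum_{i=1}^{n}\indi Z_{ij}-\E\bigl[\ind Z_{1j}\bigr]\Bigr|=o_{\mP}(1).
\]
Assumption (C3) gives $\E\Vert\bZ\Vert^{2}<\infty$, hence $\E|Z_{1j}|<\infty$, so every quantity below is finite. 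I would then write $Z_{ij}=Z_{ij}^{+}-Z_{ij}^{-}$ and treat the two parts separately, which reduces the claim to the corresponding statement for a single nonnegative weight, say $Z_{ij}^{+}$; the argument for $Z_{ij}^{-}$ is identical.

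For the nonnegative-weight piece, set $G_{n}^{+}(x)=n^{-1}\sum_{i=1}^{n}\indi Z_{ij}^{+}$ and $G^{+}(x)=\E[\ind Z_{1j}^{+}]$. Both are nondecreasing in $x$, vanish as $x\to-\infty$, and converge (respectively to $n^{-1}\sum_i Z_{ij}^{+}$ and to $\E Z_{1j}^{+}$) as $x\to+\infty$; moreover $G^{+}$ is the distribution function of the finite measure $A\mapsto\E[\mathds{1}_{\{\bX^{\bh}\in A\}}Z_{1j}^{+}]$, so it is bounded, right-continuous, and has at most countably many jumps. By the strong law of large numbers, at every fixed $x$ one has $G_{n}^{+}(x)\to G^{+}(x)$ and $G_{n}^{+}(x^{-})\to G^{+}(x^{-})$ almost surely, the latter via the strict-inequality indicator $\mathds{1}_{\{\bX_{i}^{\bh}<x\}}$. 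Given $\varepsilon>0$, I would choose a finite grid $-\infty=x_{0}<x_{1}<\dots<x_{m}=+\infty$ such that $G^{+}(x_{k}^{-})-G^{+}(x_{k-1})\le\varepsilon$ for each $k$, which is possible because $G^{+}$ is bounded and monotone once the grid is made to straddle any jump larger than $\varepsilon$. Monotonicity of $G_{n}^{+}$ and $G^{+}$ then sandwiches their difference on each interval $[x_{k-1},x_{k})$, giving
\[
\sup_{x\in\mathbb{R}}\bigl|G_{n}^{+}(x)-G^{+}(x)\bigr|\le\max_{0\le k\le m}\bigl|G_{n}^{+}(x_{k})-G^{+}(x_{k})\bigr|+\max_{1\le k\le m}\bigl|G_{n}^{+}(x_{k}^{-})-G^{+}(x_{k}^{-})\bigr|+\varepsilon.
\]
As the grid is finite, the first two terms vanish almost surely, so $\limsup_{n}\sup_{x}|G_{n}^{+}-G^{+}|\le\varepsilon$ a.s.; letting $\varepsilon\downarrow0$ yields almost sure, and hence in-probability, uniform convergence.

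Combining the $Z^{+}$ and $Z^{-}$ pieces and summing over $j=1,\dots,p$ would then deliver the stated $o_{\mP}(1)$ bound. The main obstacle is precisely the passage from the pointwise strong law to a statement uniform in $x$; I would handle it by the classical Glivenko--Cantelli/P\'olya device, whose only nonroutine inputs here are (a) the decomposition into positive and negative parts, forced by the weights $Z_{ij}$ being random and merely integrable rather than bounded, and (b) the careful placement of grid points around the (at most countable) jumps of the limiting weighted distribution function $G^{+}$. As a shortcut, one could instead note that $\{(\bx,\bz)\mapsto\mathds{1}_{\{\bx^{\bh}\le x\}}z_{j}:x\in\mathbb{R}\}$ is a VC-subgraph class with integrable envelope $|z_{j}|$, so the uniform convergence is immediate from the Glivenko--Cantelli theorem for VC-subgraph classes; I would nonetheless prefer the elementary monotone argument to keep the proof self-contained.
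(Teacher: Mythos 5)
Your proof is correct, but it takes a genuinely different route from the paper's. The paper disposes of this lemma in a single line, citing the weak law of large numbers in $\cH$ (Hoffmann--J{\o}rgensen--Pisier), the continuous mapping theorem, and Assumption (C3); as cited, those tools deliver convergence of $\bar{\bZ}_{x,\bh} - \bE_{x,\bh}^{\bZ}$ for each \emph{fixed} $x$, and the uniformity over $x\in\mR$ is left implicit. Your argument supplies exactly that missing uniformity: after the coordinatewise reduction and the split $Z_{ij}=Z_{ij}^{+}-Z_{ij}^{-}$, the monotone weighted distribution functions $G_n^{+}$ and $G^{+}$ are compared on a finite grid of measure-mesh at most $\varepsilon$, and the P\'olya/Glivenko--Cantelli sandwich converts the pointwise strong law at the finitely many grid points and their left limits into almost sure uniform convergence. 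The two nonroutine points you flag --- the positive/negative decomposition forced by the unboundedness of the weights, and the placement of grid points to straddle the jumps of $G^{+}$ --- are precisely where a naive copy of the classical Glivenko--Cantelli proof would break, and you handle both correctly. Your alternative shortcut via the Glivenko--Cantelli theorem for VC-subgraph classes (half-line indicators multiplied by the fixed integrable envelope $|z_j|$) is also valid and is closest in spirit to the paper's ``cite a theorem'' style, with the advantage that it genuinely addresses the supremum over $x$. In short, your proof is more elementary, self-contained, and if anything more complete than the paper's, at the cost of length; note also that your argument only uses $\E\Vert\bZ\Vert<\infty$, so Assumption (C3) enters only through first moments.
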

\begin{proof}%[Proof of Lemma \ref{lemma5}]
	It is a straightforward consequence of the weak law of large numbers in $\cH$ [see, e.g., \cite{hoffmann1976law}], the continuous mapping theorem, and Assumption (C3).
\end{proof}

Based on the above lemmas,  $T_{n,\bh}^{4}(x)= o_{\mP}(1)$. Together with the convergence mode of $T_{n,\bh}^{2}(x)$, we conclude that $T_{n,\bh}^{1}(x)$, $T_{n,\bh}^{3}(x)$ and $T_{n,\bh}^{5}(x)$ are the dominating terms in (\ref{decom_T}). More specifically, $T_{n,\bh}^{3}(x)$ and $T_{n,\bh}^{5}(x)$ represent the parameter estimation effect that arises due to the estimation of $\brho$ and $\bbeta$, respectively.

%\begin{lemma} \label{lemma9}
	%Under Assumptions (A2) and (B1)-(B4),
	%\begin{equation*}
		%n^{1/2} \bE_{x,\bh}^{\bZ} \left(\tilde\bbeta - \bbeta\right) = \bE_{x,\bh}^{\bZ} \left(n^{-1} \widetilde\bZ_b^{\top} \widetilde \bZ_b\right)^{-1} n^{-1/2} \sum_{i=1}^{n} \widetilde \bZ_i \varepsilon_i + o_{\mP}(1).
	%\end{equation*}
%\end{lemma}
%\begin{proof}
   % It is a direct conclusion from decomposition (\ref{decom_beta}) and Lemma \ref{lemma3}.
%\end{proof}

\section{Proof of theorems}	
\begin{proof}[Proof of Theorem \ref{theorem1}]
	 For any fixed $x \in \mathbb{R}$,  Lemmas \ref{lemma3} to \ref{lemma8}, together with the decompositions (\ref{KL}), (\ref{decom_rho}) and (\ref{decom_beta}), entail that 
	\begin{align*} 
		T_{n,\bh}(x) =& T_{n,\bh}^{1}(x) - T_{n,\bh}^{3}(x) - T_{n,\bh}^{5}(x) + o_{\mP}(1)   \\
		=& n^{-1/2} \sum_{i=1}^{n} \indi \varepsilon_i - n^{-1/2} \sum_{i=1}^{n} \langle \bE_{x,\bh}^{\bX},\Gamma^{\dagger} \bX_i \rangle \varepsilon_i \\
  &- \left(\bE_{x,\bh}^{\bZ}+\bE_{x,\bh}^{\bX,\bZ}\right)\bB^{-1} n^{-1/2} \sum_{i=1}^{n} \etab_i \varepsilon_i+ o_{\mP}(1)\\
         =& n^{-1/2} \sum_{i=1}^{n} \left\{A_x^i-B_x^i-C_x^i\right\} + o_{\mP}(1)  \\
		\equiv& T_{n0,\bh}(x) + o_{\mP}(1) ,
	\end{align*}
        where $A_x^i = \indi \varepsilon_i$, $B_x^{i} = \langle \bE_{x,\bh}^{\bX},\Gamma^{\dagger} \bX_i \rangle \varepsilon_i$ and $C_x^i = \left(\bE_{x,\bh}^{\bZ}+\bE_{x,\bh}^{\bX,\bZ}\right) \bB^{-1} \etab_i \varepsilon_i$. 
	Using Assumption (A2), the central limit theorem, and Slutsky's theorem, $T_{n,\bh}(x)$ is  asymptotically   normal.
	The joint asymptotic normality of $\left(T_{n,\bh}(x_1), \cdots, T_{n,\bh}(x_k)\right)$ for $\left(x_1,\cdots,x_k\right) \in \mathbb{R}^k$ follows by the Cram\'er--Wold device.
	
	%Note that $\Gamma^{\dagger} \to \Gamma^{-1}$ in the operator norm $\Vert \cdot \Vert_{\infty}$%
       %By the construction of $\Gamma^{\dagger}$ in Section 2.2, one can substitute $\Gamma^{\dagger}\bX_i$ by $\Gamma^{-1}\bX_i$, together with Lemma \ref{lemma3}, one obtains that
	Since $\bX_i$'s, $\bZ_i$'s and $\varepsilon_i$'s are i.i.d. and $\E\left[\varepsilon \vert \bX, \bZ \right] = 0$ $a.s.$,
	\begin{align*}
		& \Cov\left[n^{-1/2} \sum_{i=1}^{n} \left\{A_s^i-B_s^i-C_s^i\right\} , n^{-1/2} \sum_{i'=1}^{n} \left\{A_t^{i'}-B_t^{i'}-C_t^{i'}\right\} \right] \\
		=& \E\left[A_s^1 A_t^1\right] - \E\left[A_s^1 B_t^1\right] - \E\left[A_s^1 C_t^1\right] - \E\left[B_s^1 A_t^1\right] + \E\left[B_s^1 B_t^1\right] + \E\left[B_s^1 C_t^1\right] \\
		&- \E\left[C_s^1 A_t^1\right] + \E\left[C_s^1 B_t^1\right] + \E\left[C_s^1 C_t^1\right].
	\end{align*}
 %&\equiv C_1(s,t)-C_2(s,t)-C_3(s,t)-C_2(t,s)+C_4(s,t)+C_5(s,t)-C_3(t,s)+C_5(t,s)+C_6(s,t) .
	Applying the tower property with conditioning variables $\bX$ and $\bZ$, it follows that
	\begin{align*}
		\E\left[A_s^1 A_t^1\right] &= \int_{\left\{(\bx,\bz): \bx^{\bh} \le s\wedge t \right\}} \Var\left[Y \vert \bX=\bx,\bZ = \bz \right] dP_{(\bX,\bZ)}(\bx,\bz), \\
		\E\left[A_s^1 B_t^1\right]  &= \int_{\left\{(\bx,\bz): \bx^{\bh} \le s \right\}} \Var\left[Y \vert \bX = \bx, \bZ=\bz \right] \langle \bE_{t,\bh},\Gamma^{\dagger} \bx\rangle dP_{(\bX,\bZ)}(\bx,\bz), \\
		\E\left[A_s^1 C_t^1\right]  &= \int_{\left\{(\bx,\bz): \bx^{\bh} \le s \right\}}  \Var\left[Y \vert \bX=\bx,\bZ = \bz \right] \left(\bE_{t,\bh}^{\bZ}+\bE_{t,\bh}^{\bX,\bZ}\right) \bB^{-1} \etab_1 dP_{(\bX,\bZ)}(\bx,\bz),\\
		\E\left[B_s^1 B_t^1\right]  &= \int \Var\left[Y \vert \bX=\bx,\bZ = \bz \right] \langle \bE_{s,\bh},\Gamma^{\dagger} \bx \rangle  \langle \bE_{t,\bh},\Gamma^{\dagger} \bx \rangle dP_{(\bX,\bZ)}(\bx,\bz), \\
		\E\left[B_s^1 C_t^1\right]  &= \int \Var\left[Y \vert \bX=\bx,\bZ = \bz \right] \langle \bE_{s,\bh},\Gamma^{\dagger} \bx \rangle \left(\bE_{t,\bh}^{\bZ}+\bE_{t,\bh}^{\bX,\bZ}\right) \bB^{-1} \etab_1 dP_{(\bX,\bZ)}(\bx,\bz),\\
		\E\left[C_s^1 C_t^1\right]  &= \int \Var\left[Y \vert \bX=\bx,\bZ = \bz \right] \left(\bE_{s,\bh}^{\bZ}+\bE_{s,\bh}^{\bX,\bZ}\right) \bB^{-1} \left(\bE_{t,\bh}^{\bZ}+\bE_{t,\bh}^{\bX,\bZ}\right)^{\top} dP_{(\bX,\bZ)}(\bx,\bz).
	\end{align*}
        The construction of $\Gamma^{\dagger}$ and condition (i) imply that $\left\Vert \Gamma^{\dagger}x - \Gamma^{-1}x\right\Vert \pto 0$, then Cauchy--Schwarz inequality entails that $\E\left[A_s^1 B_t^1\right] - C_2(s,t)$, $\E\left[B_s^1 B_t^1\right] - C_4(s,t)$ and $\E\left[B_s^1 C_t^1\right] - C_5(s,t)$ converge to zero. Applying Slutsky's theorem, we obtain the finite-dimensional convergence of $T_{n,\bh}$.
	
	%We now prove (b). 
	The tightness of $T_{n,\bh}^{1}$ has been proven in Theorem 1.1 of \cite{stute1997nonparametric}. For $T_{n,\bh}^{2}$, by Cauchy--Schwarz inequality, 
	\begin{equation*}
		\sup_{x\in \mR} \left\vert T_{n,\bh}^2(x)\right\vert \le \sup_{x\in \mR} \left\Vert \bar{\bX}_{x,\bh} - \bE_{x,\bh}^{\bX}\right\Vert n^{1/2} \left\Vert \hat\brho- \brho\right\Vert,
	\end{equation*}
	 Assumption $\E \left[\left\Vert\hat\brho-\brho\right\Vert^4\right] = \bO(n^{-2})$ implies that $n^{1/2} \left\Vert \hat\brho -\brho\right\Vert \pto 0$, while
	the weak law of large numbers in $\cH$ together with Assumption (C3) leads to $\bar{\bX}_{x,\bh} - \bE_{x,\bh}^{\bX} \pto 0$ in $\cH$. We finally obtain $	\sup_{x\in \mR} \left\vert T_{n,\bh}^2(x)\right\vert \pto 0 $ by the continuous mapping theorem. 
	
	For the tightness of $T_{n,\bh}^{3}$, define 
	\begin{equation*}
		\bar{T}_{n,\bh}^{3}(u) \equiv n^{1/2}\left\langle \E \left[\indu \bX\right], \hat\brho - \brho \right\rangle,
	\end{equation*}
	with $\bU_{\bh} = F_{\bh}\left(\bX^{\bh}\right)$. Note that
	\begin{equation*}
		T_{n,\bh}^{3}(x) = \bar{T}_{n,\bh}^{3}\left(F_{\bh}(x)\right).
	\end{equation*}
	
	For $0 \le u_1 <u< u_2 \le 1$, consider
	\begin{align*}
		\bar{T}_{n,\bh}^{3}(u) - \bar{T}_{n,\bh}^{3}(u_1) &= n^{1/2} \left\langle \E\left[\induul \bX\right],\hat\brho-\brho \right\rangle, \\
		\bar{T}_{n,\bh}^{3}(u_2) - \bar{T}_{n,\bh}^{3}(u) &= n^{1/2} \left\langle \E\left[\induur \bX\right],\hat\brho-\brho \right\rangle.
	\end{align*}
	Then, applying Cauchy--Schwarz and Jensen inequalities,
	\begin{align*}
		&\E \left[ \left\vert\bar{T}_{n,\bh}^{3}(u) - \bar{T}_{n,\bh}^{3}(u_1) \right\vert^2 \left\vert\bar{T}_{n,\bh}^{3}(u_2) - \bar{T}_{n,\bh}^{3}(u) \right\vert^2 \right]\\
		\le& n^2\E \left[\left\Vert\E\left[\induul \bX\right] \right\Vert^2 \left\Vert \E\left[\induur \bX\right]\right\Vert^2 \left\Vert \hat\brho-\brho\right\Vert^4\right]\\
		=&n^2 \E \left[\left\Vert \hat\brho-\brho\right\Vert^4\right] \int\E\left[\induul \bX(t)\right]^2 dt \int\E\left[\induur \bX(t)\right]^2 dt\\
		\le& n^2 \E \left[\left\Vert \hat\brho-\brho\right\Vert^4\right] \int\E\left[\induul \bX(t)^2\right] dt \int\E\left[\induur \bX(t)^2\right] dt\\
		=& n^2 \E \left[\left\Vert \hat\brho-\brho\right\Vert^4\right] \left[F(u)-F(u_1)\right] \left[F(u_2)-F(u)\right]\\
		\le& n^2 \E \left[\left\Vert \hat\brho-\brho\right\Vert^4\right] \left[F(u_2)-F(u_1)\right]^2\\
		\le& \left[G(u_2)-G(u_1)\right]^2,
	\end{align*}
	where $F(u) = \int\E\left[\indu \bX(t)^2\right] dt$ and $G(u) = \sup_n \left\{n^2 \E \left[\left\Vert \hat\brho-\brho\right\Vert^4\right]\right\} F(u)$ are non-decreasing and continuous functions in $[0,1]$. The weak convergence of $\bar{T}_{n,\bh}^{3}$ in $D([0,1])$ is obtained by employing Theorem 13.5 in \cite{billingsley1999convergence} as $\gamma = 2$ and $\alpha = 1$. Then, $T_{n,\bh}^{3}$ converges in $\cH$ as a result of the continuous mapping theorem.
	
	Lemma \ref{lemma2} and Lemma \ref{lemma8} imply that $T_{n,\bh}^{4}(x) \pto 0$ uniformly in $x \in \mR$. As for $T_{n,\bh}^{5}$, by Assumption (C3), $\bE_{x,\bh}^{\bZ}$ can be regarded as a bounded nonrandom function of $x \in \mR$.  Together with Lemma \ref{lemma2}, we derive that $T_{n,\bh}^{5}$ weakly converge in $\cH$.
	
	As a consequence of all the above proof, together with Lemma \ref{lemma1} and  Slutsky's theorem, under $H_0^{\bh}$, $T_{n,\bh}$ weakly converges to a Gaussian process with zero mean and covariance function $K(s,t)$. % which is given by
	%\begin{equation} \label{cov_fun}
		%K(s,t)  = C_1(s,t)-C_2(s,t)-C_3(s,t)-C_2(t,s)+C_4(s,t)+C_5(s,t)-C_3(t,s)+C_5(t,s)+C_6(s,t).
	%\end{equation}	     
\end{proof}

\bigskip

\begin{proof}[Proof of Corollary \ref{Corollary1}]
The weak convergence of the empirical process $T_{n,\bh}(x)$ and  the continuous mapping theorem directly lead to that $\left\Vert T_{n,\bh} \right\Vert_{KS} \dto \Vert \cG \Vert_{KS}$.

For the CvM norm, we will prove that $\int_{\mR} T_{n,\bh}(x)^2 dF_{n,\bh}(x) \dto \int_{\mR} \cG(x)^2 dF_{\bh}(x)$.
The weak law of large numbers in $\cH$ and continuous mapping theorem yield that 
\begin{equation}
\label{F}
\sup_{x\in\mR}\left\vert F_{n,\bh}(x)-F_{\bh}(x)\right\vert\stackrel{a.s.}{\longrightarrow} 0.  
\end{equation}
It is clear that 
$\sup_{x\in\mR}\left\vert T_{n,\bh}(x)-\cG(x)\right\vert\stackrel{a.s.}{\longrightarrow} 0$. Note that
\begin{align*}
\left\vert\int_{\mR} T_{n,\bh}(x)^2 dF_{n,\bh}(x) - \int_{\mR} \cG(x)^2 dF_{\bh}(x) \right\vert\le& \left\vert\int_{\mR} \left\{T_{n,\bh}(x)^2-\cG(x)^2\right\} dF_{n,\bh}(x)\right\vert\\
&+\left\vert\int_{\mR} \cG(x)^2 \left\{dF_{n,\bh}(x)-dF_{\bh}(x)\right\}\right\vert
\end{align*}
The first term of the right-hand side of the above inequality is $o_{a.s}\left(1\right)$. The trajectories of the limiting process $\cG(x)$  are bounded and continuous almost surely.  Applying Helly--Bray Theorem  of these trajectories and taking into account (\ref{F}), one can get 
$\left\vert\int_{\mR} \cG(x)^2 \left\{dF_{n,\bh}(x)-dF_{\bh}(x)\right\}\right\vert\stackrel{a.s.}{\longrightarrow} 0$. This concludes the
proof of Corollary 1.
\end{proof}

\begin{proof}[Proof of Theorem \ref{theorem2}]
	Under the fixed alternative $H_1^{\bh}$ in (\ref{H1_h}), one has  that uniformly in $x \in \mathbb{R}$, 
	\begin{align*}
		n^{-1/2} T_{n,\bh}(x) =& n^{-1} \sum_{i=1}^{n} \left(Y_i - \bX_i^{\brho^{\ast}} - \bZ_i^{\top} \bbeta\right) \indi + n^{-1} \sum_{i=1}^{n} \left\langle \bX_i, \brho^{\ast} - \hat{\brho}\right\rangle \indi \\
		&+ n^{-1} \sum_{i=1}^{n} \bZ_i^{\top}\left(\bbeta - \tilde{\bbeta}\right) \indi \\
		=& \cG_1(x) + o_{\mP}(1),
	\end{align*}
	The first and third terms in the second equation are both Donsker by Corollary 2.10.13 in \cite{van1996weak}, thus  converging uniformly via the Glivenko--Cantelli theorem. The strong law of large numbers leads to the pointwise convergence of the second term in the second equation for $x \in \mR$, while the uniform convergence follows from the Cram\'er--Wold device and tightness, which can be proved through similar steps in the proof of Theorem \ref{theorem1}.
\end{proof}

\begin{proof}[Proof of Theorem \ref{theorem3}]
	Under the sequence of local alternatives $H_{1n}^{\bh}$ in (\ref{H1n}), one has that uniformly in $x \in \mathbb{R}$, 
	\begin{align*}
		T_{n,\bh}(x) &= n^{-1/2} \sum_{i=1}^{n} \left\{\hat{\varepsilon}_i - n^{-1/2}r\left(\bX_i\right)\right\} \indi + \frac{1}{n} \sum_{i=1}^{n}  r\left(\bX_i\right) \indi\\
		&= n^{-1/2} \sum_{i=1}^{n} \left(Y_i - n^{-1/2}r\left(\bX_i\right) - \bX_i^{\hat \brho} - \bZ_i^{\top} \tilde\bbeta \right) \indi + \frac{1}{n} \sum_{i=1}^{n}  r\left(\bX_i\right) \indi\\
		&= n^{-1/2} \sum_{i=1}^{n} \left(\varepsilon_i^{1n} + \bX_i^{\brho_0} + \bZ_i^{\top} \bbeta- \bX_i^{\hat \brho} - \bZ_i^{\top} \tilde\bbeta \right) \indi + \frac{1}{n} \sum_{i=1}^{n}  r\left(\bX_i\right) \indi\\
		&= n^{-1} \sum_{i=1}^{n} \left(\bar{A}_{x}^{i} - \bar{B}_{x}^{i} - \bar{C}_{x}^{i}\right) + \frac{1}{n} \sum_{i=1}^{n} r\left(\bX_i \right) \indi + o_{\mP}(1)\\
		&= n^{-1} \sum_{i=1}^{n} \left(\bar{A}_{x}^{i} - \bar{B}_{x}^{i} - \bar{C}_{x}^{i}\right) + \E\left[r\left(\bX\right) \indx\right] + o_{\mP}(1) \\
		&\dto \mathcal G_0(x) + \Delta_r(x),
	\end{align*}
	where $\left\{\varepsilon_i^{1n}\right\}$ denoting $\left\{Y_i - n^{-1/2}r(\bX_i) - \bX_i^{\brho} - \bZ_i^{\top}\bbeta\right\}$ are i.i.d. random sequence with mean zero under $H_{1n}^{\bh}$, thus enjoy the same properties as $\left\{\varepsilon_i\right\}$ under $H_{0}^{\bh}$. $\bar{A}_{x}^{i}, \, \bar{B}_{x}^{i},\, \bar{C}_{x}^{i}$ resemble  $A_x^i,\, B_x^i,\, C_x^i$ defined in the proof of Theorem \ref{theorem1}, except that $\brho$ under $H_0$ is replaced by $\brho_0$ under $H_{1n}^{\bh}$.  The proof is completed following similar arguments  of the proof of  Theorem \ref{theorem1}.
\end{proof}

\begin{proof}[Proof of Theorem \ref{theorem4}]
	From the simulation process of wild bootstrap, we use 
	\begin{equation*}
		\hat{\brho}^{*} = \Gamma_n^{\dagger}\Gamma_n \hat{\brho} + \left(\Gamma_n^{\dagger} - \Gamma^{\dagger}\right)\bU_{n}^{*} + \Gamma^{\dagger}\bU_{n}^{*}
	\end{equation*}
	to mimic (\ref{rho}). It follows that
	\begin{equation*}
		\hat{\brho}^{*} - \hat{\brho} = \left(\Gamma_n^{\dagger} - \Gamma^{\dagger}\right)\bU_{n}^{*} + \Gamma^{\dagger}\bU_{n}^{*},
	\end{equation*}
	where $\bU_{n}^{*} = \frac{1}{n} \sum_{i=1}^{n} \bX_i \otimes U_i^{*}$.
	
	Similarly, there exists
	\begin{equation*}
		n^{1/2} \left(\tilde{\bbeta}^{*} - \tilde{\bbeta}\right) = \left(n^{-1} \widetilde{\bZ}_b^T \widetilde{\bZ}_b\right)^{-1} n^{-1/2} \left(S_{n1}-S_{n2}^{*}+S_{n3}^{*}\right),
	\end{equation*}
	where $S_{n2}^{*}$ and $S_{n3}^{*}$ are same as $S_{n2}$ and $S_{n3}$ with $\varepsilon_i$ replaced by $U_i^{*}$.
	
	Note that $U_i^{*}$'s are independent conditional on $\bX_i, \, \bZ_i$ and have identical first and second order  moments with $\varepsilon_i$'s, hence the previous proof still holds with  $\varepsilon_i$'s replaced by $U_i^{*}$'s. Thus  uniformly in $x \in \mathbb{R}$,
	\begin{align*}
		T_{n,\bh}^{*}(x) &= n^{-1/2} \sum_{i=1}^{n} \indi \left\{Y_i^{*} - \bX_i^{\hat{\brho}} - \bZ^{\top}\tilde{\bbeta} \right\} - n^{-1/2} \sum_{i=1}^{n} \indi \left\{\bX_i^{\hat{\brho}^{*} - \hat{\brho}} - \bZ^{\top} \left(\tilde{\bbeta}^{*}\ - \tilde{\bbeta} \right)\right\} \\
		&= n^{-1/2} \sum_{i=1}^{n} \indi U_i^{*} - n^{-1/2} \sum_{i=1}^{n} \langle \bE_{x,\bh}^{\bX},\Gamma^{\dagger} \bX_i \rangle U_i^{*} \\
		&- \left(\bE_{x,\bh}^{\bZ}+\bE_{x,\bh}^{\bX,\bZ}\right) \bB^{-1} n^{-1/2} \sum_{i=1}^{n} \etab_i U_i^{*} + o_{\mP}(1)\\
		&= T_{n0,\bh}^{*}(x)+o_{\mP}(1),
	\end{align*}
	where $T_{n0,\bh}^{*}$ is the wild bootstrap version of $T_{n0,\bh}(x)$ with $\varepsilon_i$'s replaced by $U_i^{*}$'s. The proof is finished by the  similar procedure used in the proof of Theorem \ref{theorem1}.
\end{proof}

	%\section{Mathematical proofs}

	%\section{Additional tables}

\end{document}